\providecommand{\U}[1]{\protect\rule{.1in}{.1in}}
\newtheorem{theorem}{Theorem}
\newtheorem{corollary}[theorem]{Corollary}
\newtheorem{definition}[theorem]{Definition}
\newtheorem{example}[theorem]{Example}
\newtheorem{lemma}[theorem]{Lemma}
\newtheorem{proposition}[theorem]{Proposition}
\newtheorem{remark}[theorem]{Remark}
\newenvironment{proof}[1][Proof]{\textbf{#1.} }{\  \rule{0.5em}{0.5em}}
\newcommand{\dcoset}{/\! \!/}
\begin{document}

\title{Wiener's theorem for positive definite functions on hypergroups}
\author{Walter R Bloom\\{\small Murdoch University}\\{\small Perth, Western Australia}\\{\small Email: w.bloom@murdoch.edu.au}
\and John J.F. Fournier\\{\small University of British Columbia} \\{\small Vancouver, Canada}\\{\small Email: fournier@math.ubc.ca}
\and Michael Leinert\\{\small {University of Heidelberg}}\\{\small Heidelberg, Germany}\\{\small Email: leinert@math.uni-heidelberg.de}}
\date{}
\maketitle

\begin{abstract}
The following theorem\ on the circle group $\mathbb{T}$ is due to Norbert
Wiener: If $f\in L^{1}\left(  \mathbb{T}\right)  $ has non-negative Fourier
coefficients and is square integrable on a neighbourhood of the identity, then
$f\in L^{2}\left(  \mathbb{T}\right)  $. This result has been extended to even
exponents including $p=\infty$, but shown to fail for all other $p\in\left(
1,\infty\right]  .$ All of this was extended further (appropriately
formulated) well beyond locally compact abelian groups. In this paper we prove
Wiener's theorem for even exponents for a large class of commutative
hypergroups. In addition, we present examples of commutative hypergroups for
which, in sharp contrast to the group case, Wiener's theorem holds for
\textit{all} exponents $p\in\left[  1,\infty\right]  $. For these hypergroups
and the Bessel-Kingman hypergroup with parameter $\frac{1}{2}$ we characterise
those locally integrable functions that are of positive type and
square-integrable near the identity in terms of amalgam spaces.

\end{abstract}



\section{Introduction}

On the unit circle $\mathbb{T}$\ consider the following statement: If an
integrable function on $\mathbb{T}$ has non-negative Fourier coefficients and
is $p-$integrable on some neighbourhood of the identity, then $f$ is
$p-$integrable on all of $\mathbb{T}$. For $p=2$ this is a theorem of Norbert
Wiener. It was then shown to hold for all even $p\in\mathbb{N}$ and $p=\infty
$, but to fail for all other $p\in\left(  1,\infty\right]  $ \cite{Wainger},
\cite{Shapiro}. All of this was extended (appropriately formulated)
successively to compact abelian \cite{Rains}, locally compact abelian
\cite{Fournier1} and finally $IN$-groups \cite{Leinert} (groups having at
least one relatively compact neighbourhood of the identity invariant under
inner automorphisms).
Since, in the original formulation, Wiener's theorem does not extend to
non-compact groups (it fails even for the real line), the results on
non-compact groups $G$ are formulated with $L^{p}\left(  G\right)  $ replaced
by the amalgam space $\left(  L^{p},\ell^{\infty}\right)  \left(  G\right)  $.
(for compact groups this is no change, as $(L^{p},\ell^{\infty})\left(
G\right)  =L^{p}\left(  G\right)  $ in this case). Related information can be
found in [13], p. 1.

In Section \ref{Wiener} of this paper
we extend the positive result to a large class of commutative hypergroups,
namely those where the product of bounded continuous positive definite
functions is itself positive definite (see Corollary \ref{Amalgam equivalence}%
\ below). In particular this applies to
strong hypergroups.

In Section \ref{hypergroups}\
we consider Bessel-Kingman hypergroups. These are strong hypergroups, so the
results of Section 2 apply to them. For the motion hypergroup,
\textit{i.e.} the Bessel-Kingman hypergroup with $\alpha=\frac{1}{2}$, we show
(Theorem \ref{transforms}) that for $p=2$ there is a characterization like the
one in \cite{Fournier1}
of positive definite functions that are square integrable near the identity.
Since the proof (following \cite{Fournier1}) makes use of results about
Fourier transforms, duality and interpolation for amalgam spaces defined via
certain tilings, we need to show that on this hypergroup the norms for these
spaces are equivalent to amalgam norms defined using translations. For groups
this equivalence is well known (see \cite{Feichtinger}), but for hypergroups
this is not clear.
We obtain some results on translation, convolution and the Fourier transform
for amalgam spaces on the motion hypergroup; these are needed for the proof of
Theorem \ref{transforms}. We also compare our amalgam norms with some other
ones, including those in \cite{CowlingMedaPasquale}.

Finally in Section \ref{Countable} we look at the countable non-discrete
hypergroups considered in \cite{DunklRamirez} and \cite{Spector}. We prove the
analogue of Theorem \ref{transforms} and show that for these hypergroups,\ in
sharp contrast to the group case, Wiener's theorem holds for \textit{all}
exponents $p\in\left[  1,\infty\right]  $; see Theorem \ref{LocalGlobal} and
Corollary \ref{CompactCase} below.

\section{Wiener's theorem for even $p\in\mathbb{N}$ or $p=\infty$}

\label{Wiener}

Let $K$ be a
hypergroup with Haar measure $\omega_{K}$.
In the following any unexplained notation will be taken from
\cite{Bloom/Heyer}. Recall that, although the product of two elements, say
$x,y$ of $K$, might not be defined, the convolution of the unit point masses
$\varepsilon_{x}$ and $\varepsilon_{y}$ is defined. When the integral of a
function $f$ on $K$ against the measure $\varepsilon_{x}\ast\varepsilon_{y}$
is defined, that integral is denoted by $f(x\ast y)$. We recall the definition
of positive definiteness on hypergroups (\cite{Bloom/Heyer}, Definition 4.1.1).

\begin{definition}
\label{definePD} A function $f$ on $K$ is called \emph{positive definite} if
it is measurable and locally bounded, and%
\[
\sum_{i=1}^{n}\sum_{j=1}^{n}c_{i}\overline{c_{j}}f\left(  x_{i}\ast x_{j}%
^{-}\right)  \geq0
\]
for all choices of $c_{i}\in\mathbb{C},\,x_{i}\in K$ and $n\in\mathbb{N}.$
\end{definition}

The set of continuous positive definite functions will be denoted by $P\left(
K\right)  .$\ Note that, unlike for groups, there are hypergroups where such
functions are not necessarily bounded (see \cite{Bloom/Heyer}, p. 268 or
Remark \ref{AmalgamExtension}(a) below). The subset of bounded functions in
$P(K)$ is denoted by $P_{b}(K)$.

When $f,g$ and $h$ are functions on $K$, the notation $f(g\ast h)$ will mean
the pointwise product of the function $f$ with the convolution $g\ast h$,
rather than meaning the integral of $f$ against a measure $g\ast h$ as in the
notation $f(x\ast y)$ above. We sometimes also write $(g\ast h)f$ or
$f\cdot(g\ast h)$ (and this extends to cases where $g$ is a measure).

\begin{definition}
\label{positivetype}A locally integrable function $f$ is said to be of
\emph{positive type} if
\[
\int f\cdot(g\ast g^{\ast})\,d\omega_{K}\geq0
\]
for every $g\in C_{c}(K),$ where $g^{\ast}\left(  x\right)  :=\Delta\left(
x^{-}\right)  g^{\sim}\left(  x\right)  ,g^{\sim}:=\overline{g^{-}}$ and
$g^{-}\left(  x\right)  :=g\left(  x^{-}\right)  $ for all $x\in K$.
\end{definition}

For continuous $f$ this amounts to saying that $f$ is positive definite (see
\cite{Bloom/Heyer}, Lemma 4.1.4; when $K$ is not unimodular, the function
$g^{\sim}\ $in part (iii) of that lemma should be replaced by the function
$g^{\ast}$).
In particular, if $K$ is discrete the notions \textquotedblleft of positive
type\textquotedblright\ and \textquotedblleft positive definite" coincide.

\begin{remark}
If $K$ is any non-discrete hypergroup, there exist lower semicontinuous
functions of positive type in $L^{1}(K)$ that are unbounded near the identity
and hence don't belong to $P(K)$. To see this, note that using the outer
regularity of $\omega_{K}$ for the null set $\{e\}$ there is a decreasing
sequence of symmetric neighbourhoods $U_{n}$ with $\omega_{K}\left(
U_{n}\right)  \rightarrow0$, and we may assume $\omega_{K}\left(
U_{n}\right)  <1/n$. Let $f=\sum\lambda_{n}\mathbf{1}_{U_{n}}\ast
\mathbf{1}_{U_{n}}$ where $\lambda_{n}=1/\left(  n\,\omega_{K}\left(
U_{n}\right)  \right)  $ and $\mathbf{1}_{U_{n}}$ is the indicator function of
$U_{n}$. Now%
\[
\left\Vert f\right\Vert _{1}=\sum\lambda_{n}\omega_{K}\left(  U_{n}\right)
^{2}\leq\sum\frac{1}{n^{2}}<\infty.
\]
Being the supremum of continuous functions, $f$ is lower semicontinuous, and
we have%
\[
f(e)=\sum\lambda_{n}\omega_{K}\left(  U_{n}\right)  =\sum\frac{1}{n}=\infty
\]
so $f$ is unbounded near $e$. Since $\mathbf{1}_{U_{n}}=\left.  \mathbf{1}%
_{U_{n}}\right.  ^{\sim}$, $f$ is of positive type.

\end{remark}

On several occasions in this paper we use that if $f$ is a function of
positive type and $h$ is is a real-valued continuous function with compact
support, then $h\ast f\ast h^{-}$ is of positive type. This can be seen from
the definition using \cite{Bloom/Heyer}, (1.4.23), that is%
\begin{equation}
\int\left(  f\ast h\right)  g\text{ }d\omega_{K}=\int f\cdot\left(  g\ast
h^{-}\right)  \text{ }d\omega_{K} \label{BH(1.4.23)}%
\end{equation}
and its left-hand version%
\begin{equation}
\int\left(  h\ast f\right)  g\text{ }d\omega_{K}=\int f\cdot\left(  \left(
\Delta^{-}h^{-}\right)  \ast g\right)  \text{ }d\omega_{K}
\label{Left-hand version}%
\end{equation}
which has a similar proof. (Note that $\Delta^{-}h^{-}=h^{\ast}$ since $h$ is
real-valued.) In the special case when $f\in L^{1}(K)$ and the hypergroup $K$
is commutative, we can also see this using the Fourier transform.

\begin{remark}
\label{Positive type}Let $K$ be a commutative hypergroup. A function $f\in
L^{1}(K)$ is of positive type if and only if $\overset{\wedge}{f}\geq0$ on the
support of the Plancherel measure $\pi_{K}$.
\end{remark}

\begin{proof}
(a) Let $f\in L^{1}(K)$ be of positive type and let $\chi\in\mathrm{supp\,}%
\pi_{K}$. By \cite{Bloom/Heyer}, 4.1.22 there is net $(f_{\iota})$ in
$C_{c}(K)$ such that $f_{\iota}\ast f_{\iota}^{\sim}\rightarrow\chi$ uniformly
on compact sets. We may assume that $f_{\iota}\ast f_{\iota}^{\sim}(e)=1$ for
all $\iota$. For $\varepsilon>0$ choose a compact $C\subset K$ such that
$\int_{K\backslash C}|f|\,d\omega_{K}<\varepsilon$. Since $\left\vert
\chi\right\vert \leq1$ and $\left\vert f_{\iota}\ast f_{\iota}^{\sim
}\right\vert \leq1$ (for the second inequality, note that $f_{\iota}\ast
f_{\iota}^{\sim}\in P_{b}\left(  K\right)  $ by \cite{Bloom/Heyer}, Lemma
4.1.5(b) and the bound follows from \cite{Bloom/Heyer}, Lemma 4.1.3(g)), we
have that
\[
\left\vert \overset{\wedge}{f}(\chi)-\int f\cdot(f_{\iota}\ast f_{\iota}%
^{\sim})\,d\omega_{K}\right\vert \leq2\varepsilon+\int_{C}|f|\left\vert
\chi-(f_{\iota}\ast f_{\iota}^{\sim})\right\vert \,d\omega_{K}<3\varepsilon
\]
for suitable $\iota$. By the assumption on $f$ we have $\int f\cdot(f_{\iota
}\ast f_{\iota}^{\sim})\,d\omega_{K}\geq0$ (note that $f_{\iota}^{\sim
}=f_{\iota}^{\ast}$ since $K$ is unimodular), and hence $\overset{\wedge
}{f}(\chi)\geq0$.

(b) Suppose $\overset{\wedge}{f}\geq0$ on $\mathrm{supp\,}\pi_{K}$ and let
$g\in C_{c}(K)$. We have using (\ref{BH(1.4.23)}) and Plancherel's theorem%
\[
\int f\cdot\left(  g\ast g^{\sim}\right)  \,d\omega_{K}=\int\left(
f\ast\overline{g}\right)  \overline{\overline{g}}\,d\omega_{K}=\int\hat{f}%
\hat{\overline{g}}\overline{\hat{\overline{g}}}\,d\pi_{K}=\int\hat
{f}\left\vert \hat{\overline{g}}\right\vert ^{2}\,d\pi_{K}\geq0
\]

\end{proof}

As in \cite{Bloom/Heyer}, p.\thinspace8 the set of all probability measures on
$K$ will be denoted by $M^{1}\left(  K\right)  $.

\begin{lemma}
\label{L^1case} \label{Property*}
Let $K$ be a commutative hypergroup. For every relatively compact
neighbourhood $U$ of the identity there is a constant $C_{U}>0$ such that
\begin{equation}
\int g\cdot\left(  \mu\ast\mathbf{1}_{U}\right)  \,d\omega_{K}\leq C_{U}\int
g\mathbf{1}_{U}\,d\omega_{K} \label{*}%
\end{equation}
for all choices of $\mu\in M^{1}\left(  K\right)  $ and all non-negative $g\in
P_{b}\left(  K\right)  $.
\end{lemma}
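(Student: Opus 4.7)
The plan is to reduce the inequality, by linearity in $\mu$, to a uniform translation estimate on $g$; prove this estimate locally in a neighbourhood of $e$ using an auxiliary positive definite function; and then extend to all of $U$ by a compactness/covering argument. One may assume $U$ is symmetric: if not, replace it by $U \cup U^{-}$, a relatively compact neighbourhood of $e$, at the cost of doubling the constant (since $g(x)=g(x^{-})$ for a real-valued $g \in P_{b}(K)$ on a unimodular commutative hypergroup).

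Writing $\mu = \int_{K} \varepsilon_{y}\, d\mu(y)$ and applying Fubini, it suffices to show that for every $y \in K$,
\[
\int g \cdot (\varepsilon_{y} * \mathbf{1}_{U})\, d\omega_{K} \leq C_{U} \int_{U} g\, d\omega_{K}.
\]
By (\ref{BH(1.4.23)}) and the invariance of $\omega_{K}$ under translation, the left side equals $\int_{U} T_{y} g\, d\omega_{K}$, where $T_{y} g(v) := g(y * v) \geq 0$. The target thus becomes the \emph{translation estimate}
\[
\int_{U} T_{y} g\, d\omega_{K} \leq C_{U} \int_{U} g\, d\omega_{K} \qquad (y \in K).
\]

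To prove this I would pick a symmetric relatively compact open neighbourhood $W$ of $e$ with $W * W \subset U$ and set $\Phi := \omega_{K}(W)^{-1}\, \mathbf{1}_{W} * \mathbf{1}_{W}$. This $\Phi$ is non-negative, continuous, symmetric, supported in $U$, satisfies $\Phi(e) = 1 = \|\Phi\|_{\infty}$, and lies in $P_{b}(K) \cap L^{1}(K)$ because its Fourier transform $\omega_{K}(W)^{-1}|\widehat{\mathbf{1}_{W}}|^{2}$ is non-negative. By Bochner's theorem for the commutative hypergroup $K$, write $g = \int \chi\, d\nu(\chi)$ with $\nu$ a finite non-negative measure on $\widehat{K}$; then
\[
g * \Phi(x) = \int \chi(x)\, \widehat{\Phi}(\chi)\, d\nu(\chi)
\]
has the finite non-negative representing measure $\widehat{\Phi}\cdot\nu$, so $g * \Phi \in P_{b}(K)$ and attains its maximum at $e$. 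Unwinding this (using commutativity and symmetry of $\Phi$) gives
\[
\int (T_{y}g)\,\Phi\, d\omega_{K} \leq \int g\,\Phi\, d\omega_{K} \leq \|\Phi\|_{\infty}\int_{U} g\, d\omega_{K} = \int_{U} g\, d\omega_{K}.
\]
Continuity of $\Phi$ and $\Phi(e) = 1$ yield an open neighbourhood $V_{1}$ of $e$ on which $\Phi \geq 1/2$, producing the \emph{local estimate} $\int_{V_{1}} T_{y} g\, d\omega_{K} \leq 2 \int_{U} g\, d\omega_{K}$ for every $y$.

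Finally, I would cover $\overline{U}$ by finitely many hypergroup translates $x_{1}*V_{1}, \ldots, x_{N}*V_{1}$. Adjointness of translation together with the associativity of hypergroup convolution gives
\[
\int_{x_{i}*V_{1}} T_{y} g\, d\omega_{K} = \int_{K}\left(\int_{V_{1}} T_{z} g\, d\omega_{K}\right)d(\varepsilon_{y} * \varepsilon_{x_{i}})(z),
\]
an average against the probability measure $\varepsilon_{y} * \varepsilon_{x_{i}}$ of quantities each bounded by $2\int_{U} g\, d\omega_{K}$. Summing over $i$ yields the lemma with $C_{U} = 2N(U)$. The main obstacle is this covering step, because on a genuine hypergroup the set-theoretic indicator $\mathbf{1}_{x_{i}*V_{1}}$ is \emph{not} equal to the convolution $\varepsilon_{x_{i}} * \mathbf{1}_{V_{1}}$ (the latter takes values in $[0,1]$ rather than $\{0,1\}$), so the naive change of variable $\int_{x_{i}*V_{1}} f\, d\omega_{K} = \int_{V_{1}} T_{x_{i}} f\, d\omega_{K}$ is unavailable. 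I would circumvent this by shrinking $V_{1}$ slightly and exploiting continuity of the map $\xi \mapsto \varepsilon_{x_{i}^{-}} * \varepsilon_{\xi}$ into $M^{1}(K)$ to ensure that $(\varepsilon_{x_{i}^{-}} * \varepsilon_{u})(V_{1})$ stays bounded below on a smaller neighbourhood of each $x_{i}$; this furnishes a pointwise majorisation $\mathbf{1}_{\overline{U}} \leq C \sum_{i} (\varepsilon_{x_{i}} * \mathbf{1}_{V_{1}})$ which, plugged into the identity above, closes the argument.
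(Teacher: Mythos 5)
Your argument is correct in substance, but it reaches the key estimate by a genuinely different route than the paper. The paper also dominates $\mathbf{1}_{U}$ by finitely many translates of a bump of the form $\mathbf{1}_{V}^{\sim}\ast\mathbf{1}_{V}$ (essentially your $\Phi$, unnormalised), but it then writes $g(x)=\left\langle D(x)\mathbf{u},\mathbf{u}\right\rangle$ as a coefficient of a cyclic representation and gets the whole inequality in one stroke from the operator bound $\Vert D(\mu\ast\nu^{-})\Vert\leq\Vert\nu\Vert$, with no reduction to point masses; you instead reduce to $\mu=\varepsilon_{y}$ by Tonelli and prove the peak-at-$e$ property of $g\ast\Phi$ via Bochner's theorem on $K^{\wedge}$ (non-negative representing measure times $\widehat{\Phi}\geq0$), then patch up $U$ by a covering argument. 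Both routes are sound; what each buys is different. Your Fourier/Bochner argument is arguably more elementary for a reader fluent in commutative duality, but it is intrinsically commutative, whereas the paper's representation-theoretic version is exactly what later lets the authors extend the lemma to non-commutative hypergroups possessing a central $\mathbf{1}_{V}$ (see the remark at the end of Section \ref{Wiener}). Also, the obstacle you flag at the covering step -- that $\mathbf{1}_{x_{i}\ast V_{1}}\neq\varepsilon_{x_{i}}\ast\mathbf{1}_{V_{1}}$ -- is real, and your lower-semicontinuity fix (using that $u\mapsto(\varepsilon_{x_{i}^{-}}\ast\varepsilon_{u})(V_{1})$ is lower semicontinuous and positive at $u=x_{i}$ because $e\in\operatorname*{supp}(\varepsilon_{x_{i}^{-}}\ast\varepsilon_{x_{i}})$) does work; but you could avoid the detour entirely by doing what the paper does: since your $\Phi$ is continuous with $\Phi(e)=1$, dominate $\mathbf{1}_{U}\leq\sum_{i}\lambda_{i}\tau_{x_{i}}\Phi$ directly by compactness, and then each term $\int(T_{y}g)\,\tau_{x_{i}}\Phi\,d\omega_{K}$ is handled by the same peak-at-$e$ inequality applied after moving the translations onto $g$ via (\ref{BH(1.4.23)}). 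With that simplification your proof closes cleanly, with $C_{U}=\sum_{i}\lambda_{i}$ playing the role of the paper's $\Vert\nu\Vert$.
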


\begin{proof}
By Theorem 4.1.13 of \cite{Bloom/Heyer} we may write $g(x)$ as a coefficient
of a cyclic representation $D$ of the hypergroup $K$ on a Hilbert space
$\mathcal{H}$, that is there is a cyclic vector $\mathbf{u}\in\mathcal{H}$
such that%
\[
g(x)=\left\langle D(x)\mathbf{u},\mathbf{u}\right\rangle _{\mathcal{H}}%
\]
for all $x\in K$.

Choose a
relatively compact neighbourhood $V$ of $e$ such that
\[
\bigcup\left\{  \mathrm{supp\,}\left(  \varepsilon_{x^{-}}\ast\varepsilon
_{y}\right)  :x,y\in V\right\}  \subset U
\]
and $\omega_{K}\left(  V\right)  \leq1;$ these conditions guarantee that%
\begin{equation}
h:=\mathbf{1}_{V}^{\sim}\ast\mathbf{1}_{V}\leq\mathbf{1}_{U}. \label{nesting}%
\end{equation}
Since $h\in C_{c}^{+}\left(  K\right)  $ with $h\left(  e\right)  >0$ and $U$
is relatively compact, there exist $x_{1},x_{2},...,x_{n}\in K$ and
$\lambda_{1},\lambda_{2},...,\lambda_{n}>0$ such that $\mathbf{1}_{U}\leq
\sum_{i=1}^{n}\lambda_{i}\, \tau_{x_{i}}h,$ where%
\[
\tau_{x_{i}}h\left(  y\right)  =h\left(  x_{i}\ast y\right)
\]
is the $x_{i}-$translate of $h.$

Let $\nu=\sum_{i=1}^{n}\lambda_{i}\varepsilon_{x_{i}}$. Then
\begin{align*}
\int g\cdot(\mu\ast\mathbf{1}_{U})\,d\omega_{K}  &  \leq\int g\left(  \mu
\ast\left(  \sum_{i=1}^{n}\lambda_{i}\, \tau_{x_{i}}h\right)  \right)
\,d\omega_{K}\\
&  =\left\langle D(\mu\ast\nu^{-}\ast h)\mathbf{u},\mathbf{u}\right\rangle
_{\mathcal{H}}\\
&  =\left\langle D(\mu\ast\nu^{-}\ast\mathbf{1}_{V})\mathbf{u},D(\mathbf{1}%
_{V})\mathbf{u}\right\rangle _{\mathcal{H}}\\
&  =\left\langle D(\mu\ast\nu^{-})D(\mathbf{1}_{V})\mathbf{u},D(\mathbf{1}%
_{V})\mathbf{u}\right\rangle _{\mathcal{H}}\\
&  \leq\Vert D(\mu\ast\nu^{-})\Vert_{B\left(  \mathcal{H}\right)  }\, \Vert
D(\mathbf{1}_{V})\mathbf{u}\Vert_{\mathcal{H}}^{2}\\
&  \leq\Vert\nu\Vert\int hg\, \,d\omega_{K}%
\end{align*}
since $\Vert\mu\Vert=1$, and since
\[
\Vert D(\mathbf{1}_{V})\mathbf{u}\Vert_{\mathcal{H}}^{2}=\left\langle
D(\mathbf{1}_{V})\mathbf{u},D(\mathbf{1}_{V})\mathbf{u}\right\rangle
_{\mathcal{H}}=\left\langle D(\mathbf{1}_{V})^{\ast}D(\mathbf{1}%
_{V})\mathbf{u},\mathbf{u}\right\rangle _{\mathcal{H}}%
\]%
\[
=\left\langle D(\mathbf{1}_{V}^{\sim}\ast\mathbf{1}_{V})\mathbf{u}%
,\mathbf{u}\right\rangle _{\mathcal{H}}=\left\langle D(h)\mathbf{u}%
,\mathbf{u}\right\rangle _{\mathcal{H}}=\int hg\, \,d\omega_{K}.
\]
So, letting $C_{U}=\Vert\nu\Vert$, we have that
\[
\int g\cdot\left(  \mu\ast\mathbf{1}_{U}\right)  \,d\omega_{K}\leq C_{U}\int
g\mathbf{1}_{U}\,d\omega_{K}.
\]

\end{proof}



\begin{corollary}
\label{special case}
Let $K$ be a commutative hypergroup such that $P_{b}(K)\cdot P_{b}(K)\subset
P_{b}(K)$ and let $p\in\mathbb{N}$ be even. For every relatively compact
neighbourhood $U$ of the identity there is a constant $C_{U}>0$ such that for
all choices of $\mu\in M^{1}\left(  K\right)  $ and $f\in P_{b}\left(
K\right)  $
\begin{equation}
\int\left\vert f\right\vert ^{p}\cdot\left(  \mu\ast\mathbf{1}_{U}\right)
\,d\omega_{K}\leq C_{U}\int\left\vert f\right\vert ^{p}\mathbf{1}_{U}%
\,d\omega_{K}. \label{(c2)}%
\end{equation}
$
$
\end{corollary}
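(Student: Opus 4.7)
The plan is to deduce the corollary directly from Lemma~\ref{L^1case} by taking $g=|f|^p$. The whole task reduces to verifying that this $g$ lies in $P_b(K)$ and is non-negative, after which the conclusion (\ref{(c2)}) is just the inequality (\ref{*}) applied to $g$, with the same constant $C_U$.

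First I would check that $\overline{f}\in P_b(K)$ whenever $f\in P_b(K)$: replacing $c_i$ by $\overline{c_i}$ in the positive-definiteness inequality of Definition~\ref{definePD} and taking complex conjugates of both sides gives $\sum_{i,j}c_i\overline{c_j}\,\overline{f}(x_i\ast x_j^-)\geq 0$. Continuity and boundedness of $\overline{f}$ are inherited from $f$, so $\overline{f}\in P_b(K)$. By the hypothesis $P_b(K)\cdot P_b(K)\subset P_b(K)$ we then obtain $|f|^2=f\cdot\overline{f}\in P_b(K)$.

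Since $p$ is even, write $p=2k$. Iterating the closure of $P_b(K)$ under pointwise products $k-1$ more times gives
\[
|f|^p=\bigl(|f|^2\bigr)^k\in P_b(K),
\]
and this function is obviously non-negative (and bounded, since $f$ is). Applying Lemma~\ref{L^1case} with $g:=|f|^p$ now yields
\[
\int |f|^p\cdot(\mu\ast\mathbf{1}_U)\,d\omega_K\leq C_U\int |f|^p\mathbf{1}_U\,d\omega_K
\]
with the constant $C_U$ furnished by the lemma, which depends only on $U$ and not on $f$ or $\mu$. This is exactly (\ref{(c2)}).

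There is really no hard step here; the only point that needs any care is recognising that the hypothesis $P_b(K)\cdot P_b(K)\subset P_b(K)$ together with the conjugation-invariance of $P_b(K)$ is enough to produce $|f|^p\in P_b(K)$ for every even $p$. Once that is in hand, the corollary is an immediate specialisation of Lemma~\ref{L^1case}.
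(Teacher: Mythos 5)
Your proposal is correct and follows essentially the same route as the paper: the paper likewise notes that $\overline{f}\in P_{b}(K)$, uses the hypothesis $P_{b}(K)\cdot P_{b}(K)\subset P_{b}(K)$ to conclude $|f|^{p}=(\overline{f}f)^{p/2}\in P_{b}(K)$, and then inserts $g=|f|^{p}$ into inequality (\ref{*}) of Lemma~\ref{L^1case}. Your extra detail on conjugation-invariance of $P_{b}(K)$ and on iterating the product closure is a fine elaboration of the same argument.
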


\begin{proof}
Let $p\in\mathbb{N}$ be even. Since $f\in P_{b}(K)$, the same is true for
$\overline{f}$. It follows that
\[
|f|^{p}=(\overline{f}f)^{p/2}\in P_{b}(K)
\]
and it is also positive. Inserting $g=|f|^{p}$ in inequality (\ref{*}) yields
the inequality (\ref{(c2)}).
\end{proof}

\begin{remark}
\label{PositiveDefiniteproduct}We remind the reader that for strong
hypergroups,%
\[
P_{b}\left(  K\right)  \cdot P_{b}\left(  K\right)  \subset P_{b}\left(
K\right)  .
\]
(Use Bochner's theorem to write two functions $f$ and $g$ in $P_{b}\left(
K\right)  $ as inverse transforms of two nonnegative measures $\mu,\nu$
respectively on $K^{\wedge}$. Then $fg$ is the inverse transform of $\mu
\ast\nu$ and hence belongs to $P_{b}\left(  K\right)  $ as well.) In
particular, Corollary \ref{special case} and much of what follows holds for
all strong hypergroups.
\end{remark}

We now extend inequality (\ref{(c2)}) to integrable functions $f$ of positive type.

\begin{corollary}
\label{bad} Let $K$ be a commutative hypergroup such that $P_{b}(K)\cdot
P_{b}(K)\subset P_{b}(K)$ and take $p\in\mathbb{N}$ to be even. For every
relatively compact neighbourhood $U$ of the identity there is a constant
$C_{U}>0$ such that for all choices of $\mu\in M^{1}\left(  K\right)  $ and
$f\in L^{1}\left(  K\right)  $
of positive type (equivalently: $f\in L^{1}(K)$ with $\overset{\wedge}{f}%
\geq0$ on $\mathrm{supp\,}\pi_{K}$) we have%
\begin{equation}
\int\left\vert f\right\vert ^{p}\cdot\left(  \mu\ast\mathbf{1}_{U}\right)
\,d\omega_{K}\leq C_{U}\int\left\vert f\right\vert ^{p}\mathbf{1}_{U}%
\,d\omega_{K}. \label{pNorm}%
\end{equation}

\end{corollary}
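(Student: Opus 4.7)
The plan is to reduce Corollary~\ref{bad} to Corollary~\ref{special case} by approximating the (possibly unbounded) $L^{1}$ function $f$ of positive type by a sequence of functions in $P_{b}(K)\cap L^{1}(K)$ obtained through two-sided convolution with a symmetric approximate identity, and then pass to the limit via Fatou's lemma combined with Young's inequality for hypergroup convolutions. A covering argument at the end reconciles the small enlargement that the convolution approximation naturally introduces on the right with the requirement that the \emph{same} neighbourhood $U$ appear on both sides of (\ref{pNorm}).

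In detail: fix the relatively compact neighbourhood $U$ of $e$ and choose an open, relatively compact neighbourhood $U_{0}$ of $e$ together with a compact symmetric neighbourhood $W$ of $e$ such that the hypergroup product $\overline{W}\ast\overline{W}\ast\overline{U_{0}}$ is contained in $U$. Pick a non-negative, real-valued, symmetric approximate identity $(\psi_{n})\subset C_{c}(K)$ with $\int\psi_{n}\,d\omega_{K}=1$ and $\supp\psi_{n}\subset W$ for all $n\geq n_{0}$. Set $f_{n}:=\psi_{n}\ast f\ast\psi_{n}$; by commutativity of $K$ and $\psi_{n}=\psi_{n}^{-}$ this equals $(\psi_{n}\ast\psi_{n})\ast f$, so $f_{n}$ is continuous and bounded (being the convolution of $f\in L^{1}(K)$ with $\psi_{n}\ast\psi_{n}\in C_{c}(K)$). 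By the discussion following Definition~\ref{positivetype}, applied with $h=\psi_{n}$, $f_{n}$ is of positive type; being continuous it is therefore positive definite, so $f_{n}\in P_{b}(K)\cap L^{1}(K)$. Standard approximate-identity arguments give $f_{n}\to f$ in $L^{1}(K)$, so after extracting a subsequence we may assume $f_{n}\to f$ $\omega_{K}$-a.e.

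Applying Corollary~\ref{special case} to $f_{n}$ with the neighbourhood $U_{0}$ gives
\[
\int|f_{n}|^{p}(\mu\ast\mathbf{1}_{U_{0}})\,d\omega_{K}\leq C_{U_{0}}\int|f_{n}|^{p}\mathbf{1}_{U_{0}}\,d\omega_{K}.
\]
For $n\geq n_{0}$ the containment $\overline{W}\ast\overline{W}\ast\overline{U_{0}}\subset U$ forces $f_{n}(x)=((\psi_{n}\ast\psi_{n})\ast(f\mathbf{1}_{U}))(x)$ for every $x\in U_{0}$, so Young's inequality for hypergroup convolutions (\cite{Bloom/Heyer}, \S1.6) combined with $\|\psi_{n}\ast\psi_{n}\|_{1}\leq 1$ yields $\int_{U_{0}}|f_{n}|^{p}\,d\omega_{K}\leq\int_{U}|f|^{p}\,d\omega_{K}$. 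Passing to the a.e.-convergent subsequence and invoking Fatou's lemma on the left-hand side then gives
\[
\int|f|^{p}(\mu\ast\mathbf{1}_{U_{0}})\,d\omega_{K}\leq C_{U_{0}}\int_{U}|f|^{p}\,d\omega_{K}.
\]

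Finally, to upgrade $\mathbf{1}_{U_{0}}$ to $\mathbf{1}_{U}$ on the left I proceed exactly as in the covering step of the proof of Lemma~\ref{L^1case}: by the lower semicontinuity of $y\mapsto(\varepsilon_{x^{-}}\ast\varepsilon_{y})(U_{0})$ and compactness of $\overline{U}$, there exist $x_{1},\ldots,x_{N}\in K$ and $c>0$ with $\mathbf{1}_{U}\leq c^{-1}\sum_{i=1}^{N}\varepsilon_{x_{i}}\ast\mathbf{1}_{U_{0}}$. Each $\mu\ast\varepsilon_{x_{i}}$ is again a probability measure, so applying the previous display to each summand and summing produces (\ref{pNorm}) with $C_{U}:=c^{-1}NC_{U_{0}}$. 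The main obstacle is precisely this unavoidable small enlargement built into any convolution approximation: the covering step absorbs the enlargement into a finite sum over translates, exploiting the freedom in choosing $\mu$ — specifically the invariance of the already-proved inequality under right-multiplication of $\mu$ by a point mass.
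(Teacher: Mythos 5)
Your proof is correct and follows essentially the same route as the paper's: mollify $f$ by a two-sided convolution with a compactly supported, symmetric approximate identity to land in $P_{b}(K)\cap L^{1}(K)$, apply Corollary \ref{special case} on a smaller inner neighbourhood whose mollifier-enlargement stays inside $U$, pass to the limit by Fatou, and restore $U$ on the left by a covering argument that absorbs point-mass translates into $\mu$. The only (harmless) difference is that you bound the right-hand side uniformly in $n$ via Young's inequality, $\|(\psi_{n}\ast\psi_{n})\ast(f\mathbf{1}_{U})\|_{p}\leq\|f\mathbf{1}_{U}\|_{p}$, whereas the paper instead establishes $L^{p}$-norm convergence of the mollified functions to $f$ on the inner neighbourhood before taking the limit.
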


\begin{proof}
Let $f$ be such a function with $\int\left\vert f\right\vert ^{p}\,
\mathbf{1}_{U}\,d\omega_{K}<\infty$ and write $f_{\iota}=k_{\iota}\ast f\ast
k_{\iota}^{-}$ where $k_{\iota}\in C_{c}^{+}\left(  K\right)  ,\int k_{\iota
}\,d\omega_{K}=1$ and $\mathrm{supp\,}k_{\iota}\downarrow\left\{  e\right\}
.$ (If $K$ is first countable, then this approximate identity can in fact be
chosen to be a sequence.) Clearly $f_{\iota}$ is bounded, continuous and
integrable. Since $f_{\iota}$ is of positive type (see the paragraph
immediately preceding Remark \ref{Positive type}), it is also in $P_{b}\left(
K\right)  $. Now the values of $f_{\iota}$ on $U$ depend on the values of $f$
on a slightly larger neighbourhood $U^{\prime},$ and we cannot rule out
\textit{a priori} the possibility that $\int\left\vert f\right\vert ^{p}\,
\mathbf{1}_{U^{\prime}}\,d\omega_{K}=\infty.$ For this technical reason we
first use a compact neighbourhood $W$ of $e$ contained in the interior of $U.$

For sufficiently large $\iota$ the values of $f_{\iota}$ on $W$ only depend on
the values of $f$ on $U,$ and we have%
\begin{equation}
\left\Vert \left(  f-f_{\iota}\right)  \mathbf{1}_{W}\right\Vert _{p}%
\leq\left\Vert f\mathbf{1}_{U}-k_{\iota}\ast\left(  f\mathbf{1}_{U}\right)
\ast k_{\iota}^{-}\right\Vert _{p}\rightarrow0 \label{NormConvergence}%
\end{equation}
since $f\mathbf{1}_{W}=f\mathbf{1}_{U}\mathbf{1}_{W}$ and $f_{\iota}%
\mathbf{1}_{W}=\left[  k_{\iota}\ast\left(  f\mathbf{1}_{U}\right)  \ast
k_{\iota}^{-}\right]  \mathbf{1}_{W}$ for sufficiently large $\iota.$ We also
have%
\begin{equation}
\left\Vert f_{\iota}-f\right\Vert _{1}\rightarrow0 \label{Sequence}%
\end{equation}
and we can extract a sequence $\left(  f_{n}\right)  \ $from $\left(
f_{\iota}\right)  $ satisfying both (\ref{NormConvergence}) and
(\ref{Sequence}), and (if necessary, passing to a subsequence thereof)
converging pointwise $a.e.$ to $f.$ Using Fatou's lemma we obtain%
\begin{align*}
\int\left\vert f\right\vert ^{p}\cdot\left(  \mu\ast\mathbf{1}_{W}\right)
\,d\omega_{K}  &  \leq\liminf_{n} \int\left\vert f_{n}\right\vert ^{p}\,
\mu\ast\mathbf{1}_{W}\,d\omega_{K}\\
&  \leq C_{W}\liminf_{n} \int\left\vert f_{n}\right\vert ^{p}\, \mathbf{1}%
_{W}\,d\omega_{K}\\
&  \leq C_{W}\int\left\vert f\right\vert ^{p}\, \mathbf{1}_{W}\,d\omega_{K}%
\end{align*}
where, for the middle inequality, we have appealed to (\ref{(c2)})$,$ and the
last inequality follows from (\ref{NormConvergence})$.$ Choose $x_{1}%
,x_{2},...,x_{n}\in K$ and $\lambda_{1},\lambda_{2},...,\lambda_{n}>0$ such
that $\mathbf{1}_{U}\leq\sum_{i=1}^{n}\lambda_{i}\, \tau_{x_{i}}\mathbf{1}%
_{W}.$ We then have%
\begin{align}
\int\left\vert f\right\vert ^{p}\cdot\left(  \mu\ast\mathbf{1}_{U}\right)
\,d\omega_{K}  &  \leq\sum_{i=1}^{n}\lambda_{i}\int\left\vert f\right\vert
^{p}\cdot\left(  \mu\ast\tau_{x_{i}}\mathbf{1}_{W}\right)  \,d\omega
_{K}\nonumber\\
&  =\sum_{i=1}^{n}\lambda_{i}\int\left\vert f\right\vert ^{p}\cdot\left(
\mu\ast\varepsilon_{x_{i}^{-}}\ast\mathbf{1}_{W}\right)  \,d\omega
_{K}\nonumber\\
&  \leq C_{W}\left(  \sum_{i=1}^{n}\lambda_{i}\right)  \int\left\vert
f\right\vert ^{p}\, \mathbf{1}_{W}\,d\omega_{K}\nonumber\\
&  \leq\left(  \sum_{i=1}^{n}\lambda_{i}\right)  C_{W}\int\left\vert
f\right\vert ^{p}\, \mathbf{1}_{U}\,d\omega_{K} \label{(c3)}%
\end{align}
and this ends the proof of the corollary.
\end{proof}



\medskip To prepare for Remark \ref{AmalgamExtension}(b), we insert the
following definition.

\begin{definition}
\label{Localamalgam}For $p\in\left[  1,\infty\right)  $ we say that a
measurable function $f$ belongs to the amalgam space $\left(  L^{p}%
,\ell^{\infty}\right)  \left(  K\right)  $ if $\left\Vert f\right\Vert
_{p,\infty,U}:=\sup_{x}\left\Vert f\, \left(  \tau_{x}\mathbf{1}_{U}\right)
^{1/p}\right\Vert _{p}$ is finite for some relatively compact neighbourhood
$U$ of the identity.
\end{definition}

In the discussion following Corollary \ref{Amalgam equivalence} below, we show
that replacing $U$ by a different relatively compact neighbourhood of the
identity yields an equivalent norm and hence the same space $(L^{p},
\ell^{\infty})(K)$.
Note that
\[
L^{1}\left(  K\right)  \subset(L^{1},l^{\infty})\left(  K\right)  \subset
L_{loc}^{1}\left(  K\right)  .
\]

\begin{remark}
\begin{description}
\item[(a)] \label{AmalgamExtension} In the group case, Corollary \ref{bad}
extends to locally integrable functions $f$ of positive type (see
$\cite{Leinert}$, 1.1 and Theorem 1.6), but for hypergroups this is not always
possible. Indeed the Naimark hypergroup ($\cite{Bloom/Heyer}$, p 99, but note
the misprint in line 5, the second occurrence of $a^{n}$\ should be deleted)
is a counterexample. For this hypergroup on $\mathbb{R}^{+}$\ with Haar
measure $d\omega\left(  x\right)  =\sinh^{2}x\,dx$ there are unbounded
(positive definite) characters of the form $\chi_{a}\left(  x\right)
=\frac{\sinh\left(  rx\right)  }{r\sinh x}$ where $r>1$ and $a=-r^{2}$.
Then$\ \chi_{a}\left(  x\right)  $ behaves like $e^{\left(  r-1\right)  x}$ as
$x\rightarrow\infty$. Writing $U:=\left[  0,1\right]  $, for $x>1$ we have
$0\leq\tau_{x}\mathbf{1}_{U}\leq1$, $\operatorname*{supp}\left(  \tau
_{x}\mathbf{1}_{U}\right)  \subset J_{x}:=\left[  x-1,x+1\right]  $ and
$\int\tau_{x}\mathbf{1}_{U}\,d\omega=\int\mathbf{1}_{U}\,d\omega=:c$, so that
$\tau_{x}\mathbf{1}_{U}\geq\frac{c}{2\omega\left(  J_{x}\right)  }$ on a set
with measure at least $\frac{c}{2}$. Therefore%
\[
\left\Vert \chi_{a}\, \left(  \tau_{x}\mathbf{1}_{U}\right)  ^{1/p}\right\Vert
_{p}\geq\left\Vert \chi_{a}\tau_{x}\mathbf{1}_{U}\right\Vert _{p}\geq\left(
\min_{J_{x}}\chi_{a}\right)  \frac{c}{2\omega\left(  J_{x}\right)  }\left(
\frac{c}{2}\right)  ^{\frac{1}{p}}.
\]
For $a$ sufficiently small ($a<-9$ will do), the right-hand side of this
inequality tends to $\infty$\ as $x\rightarrow\infty$ (and hence
$J_{x}\rightarrow\left\{  \infty\right\}  $), which shows that Corollary
\ref{bad} does not hold on this hypergroup.
\end{description}

\begin{description}
\item[(b)] The proof of Corollary \ref{bad} works for any (locally integrable)
function $f$ of positive type for which the convolutions $f_{\iota}$ all
belong to $L^{\infty}.$ Those convolutions are continuous, of positive type
and (by assumption) bounded, hence positive definite. The $L^{1}$-convergence
in (\ref{Sequence}) can then be replaced by local $L^{1}-$convergence, that is
by convergence in $L^{1}\left(  C\right)  $ for every compact set $C$.

In particular, the proof works for all $f\in\left(  L^{1},\ell^{\infty
}\right)  \left(  K\right)  $ of positive type because the $k_{\iota}$ in our
proof all belong to $C_{c}\left(  K\right)  $. So $f\ast k_{\iota}^{-}\in
L^{\infty},$ as we show in a moment, and hence so does $f_{\iota}=k_{\iota
}\ast f\ast k_{\iota}^{-}$, which shows that $f_{\iota}$ is bounded for each
$\iota$.

For any relatively compact neighbourhood $U\ni e,$ and $\iota$ chosen suitably
large so that $\operatorname*{supp}\left(  k_{\iota}\right)  \subset U$, we
have%
\begin{align*}
\left\vert f\ast k_{\iota}^{-}\right\vert \left(  x\right)   &  \leq
\int\left\vert f\left(  x\ast y\right)  k_{\iota}^{-}\left(  y^{-}\right)
\right\vert d\omega\left(  y\right) \\
&  \leq\left\Vert k_{\iota}\right\Vert _{\infty}\int\left\vert f\left(  x\ast
y\right)  \right\vert \mathbf{1}_{U}\left(  y\right)  d\omega\left(  y\right)
\\
&  \leq\left\Vert k_{\iota}\right\Vert _{\infty}\int\left\vert f\right\vert
\left(  x\ast y\right)  \mathbf{1}_{U}\left(  y\right)  d\omega\left(
y\right) \\
&  =\left\Vert k_{\iota}\right\Vert _{\infty}\int\left\vert f\left(  y\right)
\right\vert \mathbf{1}_{U}\left(  x^{-}\ast y\right)  d\omega\left(  y\right)
\\
&  =\left\Vert k_{\iota}\right\Vert _{\infty}\left\Vert \left\vert
f\right\vert \tau_{x^{-}}\mathbf{1}_{U}\right\Vert _{1}\\
&  \leq\left\Vert k_{\iota}\right\Vert _{\infty}\left\Vert f\right\Vert
_{1,\infty,U}%
\end{align*}
where for the first equality we refer to \cite{Bloom/Heyer}, Theorem 1.3.21,
and hence $f\ast k_{\iota}^{-}$\ is bounded.
\end{description}
\end{remark}

\begin{theorem}
\label{NormCase} Let $K$ be a commutative hypergroup such that $P_{b}(K)\cdot
P_{b}(K)\subset P_{b}(K)$ and let $p\in\mathbb{N}$ be even. For every
relatively compact neighbourhood $U$ of the identity there is a constant
$C_{U}>0$ such that for all choices of $\mu\in M^{1}\left(  K\right)  $ and
$f\in\left(  L^{1},\ell^{\infty}\right)  \left(  K\right)  $
of positive type we have%
\begin{equation}
\left\Vert f\cdot\left(  \mu\ast\mathbf{1}_{U}\right)  \right\Vert _{p}%
\leq\left\Vert f\cdot\left(  \mu\ast\mathbf{1}_{U}\right)  ^{1/p}\right\Vert
_{p}\leq C_{U}^{1/p}\left\Vert f\, \left(  \mathbf{1}_{U}\right)
^{1/p}\right\Vert _{p}=C_{U}^{1/p}\left\Vert f\, \mathbf{1}_{U}\right\Vert
_{p} \label{powergeneral}%
\end{equation}
In particular this holds for $f\in L^{1}\left(  K\right)  $ of positive type
(equivalently: $f\in L^{1}(K)$ with $\overset{\wedge}{f}\geq0$ on
$\mathrm{supp\,}\pi_{K}$).
\end{theorem}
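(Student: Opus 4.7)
The plan is to reduce the theorem to the inequality (\ref{pNorm}) of Corollary \ref{bad} and then to extend that corollary from $L^{1}(K)$ to the amalgam space $(L^{1},\ell^{\infty})(K)$ using Remark \ref{AmalgamExtension}(b). The first and last parts of (\ref{powergeneral}) are essentially free: since $\mu$ is a probability measure and $0\le\mathbf{1}_{U}\le 1$, we have $0\le\mu\ast\mathbf{1}_{U}\le 1$, hence $\mu\ast\mathbf{1}_{U}\le(\mu\ast\mathbf{1}_{U})^{1/p}$ pointwise for $p\ge 1$, which gives the first inequality, while the terminal equality follows from $\mathbf{1}_{U}^{1/p}=\mathbf{1}_{U}$. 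After raising the middle inequality to the $p$-th power, what remains is
\[
\int|f|^{p}\cdot(\mu\ast\mathbf{1}_{U})\,d\omega_{K}\le C_{U}\int|f|^{p}\mathbf{1}_{U}\,d\omega_{K},
\]
which is formally the same as (\ref{pNorm}), now demanded for every $f\in(L^{1},\ell^{\infty})(K)$ of positive type. One may assume the right-hand side is finite, otherwise nothing needs to be proved.

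My strategy is to rerun the proof of Corollary \ref{bad} with the regularisations $f_{\iota}=k_{\iota}\ast f\ast k_{\iota}^{-}$, where $k_{\iota}\in C_{c}^{+}(K)$ is an approximate identity with $\int k_{\iota}\,d\omega_{K}=1$ and $\supp k_{\iota}\downarrow\{e\}$. The one new ingredient needed is that each $f_{\iota}$ still lies in $P_{b}(K)$. Boundedness is provided by Remark \ref{AmalgamExtension}(b), which shows that $f\ast k_{\iota}^{-}\in L^{\infty}$ with $\|f\ast k_{\iota}^{-}\|_{\infty}\le\|k_{\iota}\|_{\infty}\|f\|_{1,\infty,U}$; a further convolution with $k_{\iota}$ keeps $f_{\iota}$ bounded and makes it continuous. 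Since $f$ is of positive type and $k_{\iota}$ is real-valued with compact support, $f_{\iota}$ is also of positive type (as recalled just before Remark \ref{Positive type}), so continuity together with boundedness gives $f_{\iota}\in P_{b}(K)$, whereupon Corollary \ref{special case} applies to $f_{\iota}$.

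Next I would pick a compact neighbourhood $W$ of $e$ contained in the interior of $U$. For sufficiently large $\iota$ the values of $f_{\iota}$ on $W$ depend only on $f\mathbf{1}_{U}$, which belongs to $L^{1}(K)\cap L^{p}(K)$ by local integrability of $f$ and the assumption $\|f\mathbf{1}_{U}\|_{p}<\infty$. Standard approximate-identity properties then yield a subsequence $(f_{n})$ with $f_{n}\mathbf{1}_{W}\to f\mathbf{1}_{W}$ in $L^{p}$ and pointwise almost everywhere. Applying inequality (\ref{(c2)}) to each $f_{n}$, then using Fatou's lemma on the left-hand side and the $L^{p}(W)$-convergence on the right, produces
\[
\int|f|^{p}\cdot(\mu\ast\mathbf{1}_{W})\,d\omega_{K}\le C_{W}\int|f|^{p}\mathbf{1}_{W}\,d\omega_{K}.
\]
A finite cover $\mathbf{1}_{U}\le\sum_{i=1}^{n}\lambda_{i}\tau_{x_{i}}\mathbf{1}_{W}$ and the translation manipulation at the end of the proof of Corollary \ref{bad} then upgrade $W$ to $U$ on both sides, up to replacing $C_{W}$ by $C_{W}\sum_{i}\lambda_{i}$.

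The main obstacle is guaranteeing that the regularisations $f_{\iota}$ remain in $P_{b}(K)$, because Corollary \ref{special case} requires \emph{bounded} positive definite inputs. This is precisely why Remark \ref{AmalgamExtension}(b) was recorded beforehand: the uniform sup-norm bound on $f\ast k_{\iota}^{-}$ that it provides is exactly what the approximation argument needs to move from $L^{1}(K)$ to the amalgam class $(L^{1},\ell^{\infty})(K)$.
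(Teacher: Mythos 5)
Your overall route is the paper's own: the outer inequality and the terminal equality come from $0\leq\mu\ast\mathbf{1}_{U}\leq1$ and $\mathbf{1}_{U}^{1/p}=\mathbf{1}_{U}$, and the middle inequality is obtained by rerunning the proof of Corollary \ref{bad} for $f\in\left(L^{1},\ell^{\infty}\right)\left(K\right)$ of positive type, with Remark \ref{AmalgamExtension}(b) supplying the boundedness of the regularisations $f_{\iota}=k_{\iota}\ast f\ast k_{\iota}^{-}$ so that they lie in $P_{b}(K)$ and Corollary \ref{special case} applies to them. Up to that point your write-up matches the intended argument.

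There is, however, one step that does not go through as you state it. You extract a subsequence with $f_{n}\mathbf{1}_{W}\rightarrow f\mathbf{1}_{W}$ in $L^{p}$ and pointwise a.e., and then apply Fatou's lemma to $\int\left\vert f_{n}\right\vert^{p}\cdot\left(\mu\ast\mathbf{1}_{W}\right)\,d\omega_{K}$. Fatou requires $\left\vert f_{n}\right\vert^{p}\rightarrow\left\vert f\right\vert^{p}$ a.e.\ on the set where $\mu\ast\mathbf{1}_{W}>0$, and for a general $\mu\in M^{1}\left(K\right)$ (already for $\mu=\varepsilon_{x}$ with $x$ far from $e$) that set lies far outside $W$, so a.e.\ convergence on $W$ alone gives no control there. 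In the $L^{1}$ case the paper secures this via the global convergence (\ref{Sequence}), passing to a further subsequence that converges a.e.\ on $K$; in the amalgam case global $L^{1}$-convergence is unavailable ($f$ need not be integrable), and the missing ingredient is precisely the other half of Remark \ref{AmalgamExtension}(b), which you did not invoke: $f_{\iota}\rightarrow f$ locally in $L^{1}$, i.e.\ in $L^{1}(C)$ for every compact $C$, which again allows one to pass to a subsequence converging a.e.\ where the Fatou step needs it. Once that replacement is made, your argument coincides with the paper's and is complete.
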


\begin{proof}
The first inequality in (\ref{powergeneral}) holds for all finite exponents
$p>1$ since $0\leq\mu\ast\mathbf{1}_{U}\leq1.$ The next inequality in
(\ref{powergeneral}) uses Corollary \ref{bad}, the assumption that
$p\in\mathbb{N}$ is even and Remark \ref{AmalgamExtension}(b).
\end{proof}

\begin{corollary}
\label{Infinity norm}Let $K$ be a commutative hypergroup such that
$P_{b}(K)\cdot P_{b}(K)\subset P_{b}(K)$. For $f\in\left(  L^{1},\ell^{\infty
}\right)  \left(  K\right)  $\ of positive type we have%
\begin{equation}
\left\Vert f\right\Vert _{\infty}\leq\left\Vert f\mathbf{1}_{U}\right\Vert
_{\infty}. \label{Infinity norm inequality}%
\end{equation}
In particular, since $0\leq\tau_{x}\mathbf{1}_{U}\leq1$, we have%
\begin{equation}
\left\Vert f\tau_{x}\mathbf{1}_{U}\right\Vert _{\infty}\leq\left\Vert
f\mathbf{1}_{U_{x}}\right\Vert _{\infty}\leq\left\Vert f\mathbf{1}%
_{U}\right\Vert _{\infty} \label{Second infinity norm inequality}%
\end{equation}
where $U_{x}=\left\{  y\left\vert \tau_{x}\mathbf{1}_{U}(y)>0\right.
\right\}  $.
\end{corollary}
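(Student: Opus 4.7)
My plan is to apply Theorem \ref{NormCase} with the point mass $\mu=\varepsilon_{x^-}$ and to let the exponent $p$ tend to infinity through even integers. With that choice, $\mu\ast\mathbf{1}_U = \tau_x\mathbf{1}_U$, so Theorem \ref{NormCase} furnishes, for every even $p\in\mathbb{N}$,
\begin{equation*}
\left(\int_K |f|^p\,\tau_x\mathbf{1}_U\,d\omega_K\right)^{1/p} \leq C_U^{1/p}\,\|f\mathbf{1}_U\|_p.
\end{equation*}
Assuming $A := \|f\mathbf{1}_U\|_\infty < \infty$ (otherwise both inequalities in the corollary are trivial) and using the crude bound $\|f\mathbf{1}_U\|_p \leq A\,\omega_K(U)^{1/p}$, the right-hand side is dominated by $(C_U\,\omega_K(U))^{1/p}A$, which converges to $A$ as $p\to\infty$.

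Next, I would interpret the left-hand side as the $L^p(\nu_x)$-norm of $f$, where $\nu_x := \tau_x\mathbf{1}_U\cdot\omega_K$ is a finite Borel measure whose null sets, inside $U_x=\{y:\tau_x\mathbf{1}_U(y)>0\}$, coincide with those of $\omega_K$. Since these $L^p(\nu_x)$-norms are uniformly bounded along even $p$, the standard lower $L^p$-bound on a finite measure space (if $|f|>M$ on a set of positive $\nu_x$-measure $\delta$ then $\|f\|_{L^p(\nu_x)}\geq M\,\delta^{1/p}\to M$) yields
\begin{equation*}
\|f\mathbf{1}_{U_x}\|_\infty = \|f\|_{L^\infty(\nu_x)} \leq A = \|f\mathbf{1}_U\|_\infty.
\end{equation*}

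To finish, I would observe that $K = \bigcup_{x\in K}U_x$: for any $y\in K$ the choice $x=y^-$ gives $\tau_{y^-}\mathbf{1}_U(y) = (\varepsilon_{y^-}\ast\varepsilon_y)(U) > 0$, since $e\in\mathrm{supp}(\varepsilon_{y^-}\ast\varepsilon_y)\cap U$. Taking the supremum over $x$ of the previous display then delivers (\ref{Infinity norm inequality}), and the ``in particular'' assertion (\ref{Second infinity norm inequality}) follows from the chain $\|f\tau_x\mathbf{1}_U\|_\infty \leq \|f\mathbf{1}_{U_x}\|_\infty \leq \|f\|_\infty \leq \|f\mathbf{1}_U\|_\infty$, whose first step uses $\tau_x\mathbf{1}_U\leq\mathbf{1}_{U_x}$. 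No serious obstacle is anticipated; the only point requiring care is the passage $p\to\infty$, which is legitimate because $\nu_x$ is finite and we have a uniform $L^p$-bound along even integers.
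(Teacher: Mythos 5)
Your argument is correct and is essentially the paper's own proof: apply Theorem \ref{NormCase} with a point mass, view the middle quantity in (\ref{powergeneral}) as the $L^p$ norm of $f$ with respect to the finite measure $\tau_x\mathbf{1}_U\,d\omega_K$, let $p\to\infty$ through even integers to bound the essential supremum of $|f|$ on $U_x$ by $\Vert f\mathbf{1}_U\Vert_\infty$, and then vary $x$ so that the sets $U_x$ cover $K$. The only difference is the immaterial choice $\mu=\varepsilon_{x^-}$ rather than $\varepsilon_x$, together with your (welcome) spelling out of the limiting details that the paper leaves implicit.
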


\begin{proof}
The second quantity in (\ref{powergeneral}) is the $L^{p}$ norm of $f$
relative to the measure $\left(  \mu\ast\mathbf{1}_{U}\right)  d\omega$. Since
the total mass of this measure is finite, letting $p\rightarrow\infty$ in
(\ref{powergeneral})\ gives the essential supremum of $\left\vert f\right\vert
$\ on the set where $\mu\ast\mathbf{1}_{U}>0$. Apply this with $\mu
=\varepsilon_{x}$\ for various points $x$ in $K,$ and use the fact that
$U_{x}$ is a neighbourhood of $x^{-}$, to obtain $\left\Vert f\right\Vert
_{\infty}\leq\left\Vert f\mathbf{1}_{U}\right\Vert _{\infty}$.
\end{proof}

\begin{remark}
\label{translation} Note that taking $\mu=\varepsilon_{x}$ in Theorem
\ref{NormCase} gives that for all even $p\in\mathbb{N}$%
\begin{equation}
\left\Vert f\, \tau_{x}\mathbf{1}_{U}\right\Vert _{p}\leq\left\Vert f\left(
\tau_{x}\mathbf{1}_{U}\right)  ^{1/p}\right\Vert _{p}\leq C_{U}^{1/p}%
\left\Vert f\, \mathbf{1}_{U}\right\Vert _{p}. \label{power}%
\end{equation}
It is useful to recall at this stage that for fixed $p$, the quantities
$\left\Vert f\, \tau_{x}\mathbf{1}_{U}\right\Vert _{p}$ and $\left\Vert f\,
\left(  \tau_{x}\mathbf{1}_{U}\right)  ^{1/p}\right\Vert _{p}$ agree on groups
but not necessarily on hypergroups (see the end of Remark \ref{remark20} below).
\end{remark}

We restate (\ref{Second infinity norm inequality}) and (\ref{power}) using
Definition \ref{Localamalgam}.

\begin{corollary}
\label{Amalgam equivalence}(Wiener's theorem for functions in $\left(
L^{1},\ell^{\infty}\right)  \left(  K\right)  $) Let $K$ be a commutative
hypergroup such that $P_{b}(K)\cdot P_{b}(K)\subset P_{b}(K)$ and take
$p\in\mathbb{N}$ even or $p=\infty.$ If $f\in\left(  L^{1},\ell^{\infty
}\right)  \left(  K\right)  $ is of positive type,
and satisfies $f\, \mathbf{1}_{U}\in L^{p}\left(  K\right)  $ for some
relatively compact neighbourhood $U$\ of $e$, then
\[
f\in\left(  L^{p},\ell^{\infty}\right)  \left(  K\right)  \quad\text{and}%
\quad\|f\|_{p, \infty, U} \leq C^{1/p}_{U}\left\Vert f\, \mathbf{1}%
_{U}\right\Vert _{p}.
\]
In particular this holds for $f\in L^{1}\left(  K\right)  $ satisfying the
same conditions.
\end{corollary}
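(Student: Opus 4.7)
The corollary is essentially a restatement, in the amalgam-space language of Definition \ref{Localamalgam}, of the pointwise-in-$x$ bounds already established. The plan is therefore to specialise the results of Theorem \ref{NormCase} and Corollary \ref{Infinity norm} to the probability measure $\mu = \varepsilon_x$ and take the supremum in $x$.

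For the case of even $p \in \mathbb{N}$, I would observe that $\varepsilon_x \ast \mathbf{1}_U = \tau_x \mathbf{1}_U$ and $\|\varepsilon_x\| = 1$, and then apply Theorem \ref{NormCase} with $\mu = \varepsilon_x$ to obtain
\[
\|f \,(\tau_x \mathbf{1}_U)^{1/p}\|_p \leq C_U^{1/p}\|f\mathbf{1}_U\|_p
\]
with $C_U$ independent of $x$; this is precisely inequality (\ref{power}) in Remark \ref{translation}. Taking the supremum over $x \in K$ on the left-hand side yields
\[
\|f\|_{p,\infty,U} = \sup_{x}\|f\,(\tau_x\mathbf{1}_U)^{1/p}\|_p \leq C_U^{1/p}\|f\mathbf{1}_U\|_p,
\]
and the hypothesis $f\mathbf{1}_U \in L^p(K)$ makes the right-hand side finite, so $f \in (L^p,\ell^\infty)(K)$.

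For $p = \infty$ the quantity $(\tau_x\mathbf{1}_U)^{1/p}$ is naturally interpreted as $\mathbf{1}_{U_x}$, where $U_x = \{y : \tau_x\mathbf{1}_U(y) > 0\}$, so the amalgam norm becomes $\sup_x\|f\mathbf{1}_{U_x}\|_\infty$. Inequality (\ref{Second infinity norm inequality}) from Corollary \ref{Infinity norm} gives $\|f\mathbf{1}_{U_x}\|_\infty \leq \|f\mathbf{1}_U\|_\infty$ for every $x$, and the constant $C_U^{1/p} = 1$ in the limit is consistent with the derivation of (\ref{Infinity norm inequality}) as the $p \to \infty$ limit of (\ref{powergeneral}). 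Taking the supremum in $x$ finishes this case.

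There is no substantive obstacle: the whole content of the corollary is the translation of the uniform-in-$\mu$ bounds of the previous results into a statement about the single quantity $\|f\|_{p,\infty,U}$. The only point requiring a line of care is the matching of the definition of the amalgam norm at $p = \infty$ with the $p \to \infty$ limit of Definition \ref{Localamalgam}, which is already handled by the switch from weights $(\tau_x\mathbf{1}_U)^{1/p}$ to the indicator of the support set $U_x$ as in Corollary \ref{Infinity norm}.
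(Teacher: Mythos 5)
Your proposal is correct and matches the paper's own route exactly: the paper introduces the corollary with the sentence ``We restate (\ref{Second infinity norm inequality}) and (\ref{power}) using Definition \ref{Localamalgam},'' i.e.\ it too obtains the even-$p$ case by taking $\mu=\varepsilon_{x}$ in Theorem \ref{NormCase} (inequality (\ref{power})) and taking the supremum in $x$, and the $p=\infty$ case from Corollary \ref{Infinity norm}. Nothing further is needed.
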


Note that, by the equivalence proved next, if $K$ is compact, then
$(L^{p},\ell^{\infty})=L^{p}$ and $\Vert\cdot\Vert_{p,\infty,U}$ equals (up to
equivalence) the $L^{p}$ norm on $K$ (take $\Vert\cdot\Vert_{p,\infty,K}$ and
use $\tau_{x}\mathbf{1}_{K}=\mathbf{1}_{K})$.


We now compare $\Vert f\Vert_{p,\infty,U}$ for different choices of $U$ (even
on non-commutative hypergroups). Let $U$ and $V$ be relatively compact
neighbourhoods of $e$, and denote the corresponding amalgam spaces by
$(L^{p},\ell^{\infty})_{U}$ and $(L^{p},\ell^{\infty})_{V}$ respectively.
There are $\lambda_{i}>0$ and $x_{i}\in K$ such that $1_{U}\leq\sum_{i=1}%
^{n}\lambda_{i}\tau_{x_{i}}\mathbf{1}_{V}$. Let $f\in(L^{p},\ell^{\infty}%
)_{V}$ and $x\in K.$ When $1\leq p<\infty$ we have
\[
\left\Vert f\, \left(  \tau_{x}1_{U}\right)  ^{1/p}\right\Vert _{p}^{p}%
=\int\left\vert f\right\vert ^{p}\tau_{x}1_{U}\,d\omega_{K}\leq\int\left\vert
f\right\vert ^{p}\tau_{x}\left(  \sum_{i=1}^{n}\lambda_{i}\tau_{x_{i}%
}\mathbf{1}_{V}\right)  \,d\omega_{K}%
\]%
\[
=\sum_{i=1}^{n}\lambda_{i}\left\Vert f\, \left(  \tau_{x}\tau_{x_{i}%
}\mathbf{1}_{V}\right)  ^{1/p}\right\Vert _{p}^{p}\leq\left(  \sum_{i=1}%
^{n}\lambda_{i}\right)  \Vert f\Vert_{p,\infty,V}^{p}%
\]
by Lemma \ref{Amalgam}\ below (set $\mu=\varepsilon_{x^{-}}\ast\varepsilon
_{x_{i}^{-}}$). Hence
\[
f\in(L^{p},\ell^{\infty})_{U}\quad\text{and}\quad\Vert f\Vert_{p,\infty,U}\leq
C\Vert f\Vert_{p,\infty,V}%
\]
with $C=\left(  \sum_{i=1}^{n}\lambda_{i}\right)  ^{1/p}$, so that the amalgam
space $\left(  L^{p},\ell^{\infty}\right)  \left(  K\right)  $ does not depend
on the chosen neighbourhood.

Note that, since necessarily $\sum\lambda_{i} \geq1$, this sum can serve as a
constant for all finite $p$. So we have constants of equivalence which only
depend on $U$ and $V$, but not on $p$.

If $p=\infty$ and (as before) we denote by $U_{x}$\ the set where $\tau
_{x}\mathbf{1}_{U}>0$, then $\left\Vert f\right\Vert _{\infty,\infty,U}%
=\sup_{x}\left\Vert f\mathbf{1}_{U_{x}}\right\Vert _{\infty}$. Since $U_{x}%
$\ is a neighbourhood of $x$, we obtain $\left\Vert f\right\Vert
_{\infty,\infty,U}= \left\Vert f\right\Vert _{\infty}$. So in this case, if we
use $V$ instead of $U$, we obtain not only an equivalent norm but in fact the
very same norm.

\begin{lemma}
\label{Amalgam}Let $p\in\left[  1,\infty\right]  .$ For $f\in(L^{p}%
,\ell^{\infty})_{V}$ and $\mu$ a probability measure with compact support we
have $f\left(  \mu\ast\mathbf{1}_{V}\right)  ^{1/p}\in L^{p}$ and $\Vert
f\left(  \mu\ast\mathbf{1}_{V}\right)  ^{1/p}\Vert_{p}\leq\Vert f\Vert
_{p,\infty,V}$.
\end{lemma}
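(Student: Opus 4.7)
The plan is to unwind $\mu * \mathbf{1}_V$ as an integral of translates of $\mathbf{1}_V$ and then apply Tonelli's theorem. Using the identity $\varepsilon_{x^-} * \mathbf{1}_V = \tau_x \mathbf{1}_V$ already invoked in the proof of Corollary \ref{bad}, one has
$$
(\mu * \mathbf{1}_V)(y) = \int_K (\tau_{x^-} \mathbf{1}_V)(y) \, d\mu(x).
$$

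For $p \in [1,\infty)$ I would raise to the $p$-th power and swap the order of integration, which is legitimate by Tonelli since all integrands are non-negative:
\begin{align*}
\|f(\mu * \mathbf{1}_V)^{1/p}\|_p^p
&= \int_K |f(y)|^p (\mu * \mathbf{1}_V)(y) \, d\omega_K(y) \\
&= \int_K \int_K |f(y)|^p (\tau_{x^-} \mathbf{1}_V)(y)\, d\omega_K(y)\, d\mu(x) \\
&= \int_K \|f (\tau_{x^-} \mathbf{1}_V)^{1/p}\|_p^p\, d\mu(x).
\end{align*}
Each inner $L^p$-norm is bounded by $\|f\|_{p,\infty,V}^p$ directly from Definition \ref{Localamalgam} (the supremum over $x^-$ is of course the same as the supremum over $x$), and since $\mu$ is a probability measure the outer integral is then at most $\|f\|_{p,\infty,V}^p$. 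Taking $p$-th roots gives both $f(\mu * \mathbf{1}_V)^{1/p} \in L^p$ and the desired inequality.

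The case $p = \infty$ is immediate from the discussion immediately preceding the lemma, which identifies $\|f\|_{\infty,\infty,V}$ with $\|f\|_\infty$: interpreting $(\mu * \mathbf{1}_V)^{1/\infty}$ as the indicator of $\{\mu * \mathbf{1}_V > 0\}$, the claim reduces to the trivial bound $\|f \cdot \mathbf{1}_{\{\mu * \mathbf{1}_V > 0\}}\|_\infty \leq \|f\|_\infty$. The only technical point requiring verification is the application of Tonelli, namely the joint measurability of $(x,y) \mapsto |f(y)|^p (\tau_{x^-}\mathbf{1}_V)(y)$ on $K \times K$; this follows from standard measurability properties of hypergroup convolution (see \cite{Bloom/Heyer}, Section 1.4), and so I do not expect any substantive obstacle.
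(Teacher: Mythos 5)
Your proof is correct, but it takes a genuinely different and more direct route than the paper's. The paper approximates $\mu$ weakly by convex combinations of Dirac measures (via \cite{LeinBook}, Proposition 13.64, and \cite{Bloom/Heyer}, Theorem 1.6.18(b)), extracts a subsequence with $\mu_n\ast\mathbf{1}_V\to\mu\ast\mathbf{1}_V$ almost everywhere, dominates $f\,(\mu_n\ast\mathbf{1}_V)^{1/p}$ by a compactly supported majorant built from finitely many translates $(\tau_{y_k}\mathbf{1}_V)^{1/p}$, and passes to the limit by dominated convergence, the desired inequality being elementary for each discrete $\mu_n$. You instead write $\mu\ast\mathbf{1}_V$ pointwise as the $\mu$-average of the translates $\tau_{x^-}\mathbf{1}_V$ and apply Tonelli, which gives the bound in one stroke for all finite $p$ and bypasses the weak-approximation and convergence machinery entirely; your treatment of $p=\infty$ agrees with the identification $\Vert f\Vert_{\infty,\infty,V}=\Vert f\Vert_\infty$ established just before the lemma. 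What your route requires in exchange is the joint measurability of $(x,y)\mapsto\mathbf{1}_V(x^-\ast y)$ --- standard for hypergroups, and for open $V$ one even gets lower semicontinuity from the weak continuity of $(x,y)\mapsto\varepsilon_{x^-}\ast\varepsilon_y$ --- together with the observation that, since $\mu$ has compact support and $V$ is relatively compact, the double integral is carried by a product of sets of finite measure, so Tonelli applies without any $\sigma$-finiteness assumption on $\omega_K$. With those two routine points made explicit, your argument is a clean simplification of the paper's proof.
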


\begin{proof}
By \cite{LeinBook} Proposition 13.64 and the remarks following it, the set $S$
of all convex linear combinations of Dirac measures is weakly dense in
$M^{1}(K)$. So there is a net $(\mu_{\iota})$ in $S$ with $\mu_{\iota
}\rightarrow\mu$ weakly. In the present case we may assume $\mathrm{supp\,}%
\mu_{\iota}\subset\mathrm{supp\,}\mu$ (in the proof of \cite{LeinBook}, 13.64,
if $A_{j}\, \cap\mathrm{supp\,}\mu\neq\emptyset$, choose $x_{j}$ in this set
and not just in $A_{j}$). By \cite{Bloom/Heyer} Theorem 1.6.18(b) we obtain
$\Vert\mu_{\iota}\ast g-\mu\ast g\Vert_{1}\rightarrow0$ for all $g \in
L^{1}(K)$. From the net $\left(  \mu_{\iota}\ast\mathbf{1}_{V}\right)  $ we
may extract a sequence $\left(  \mu_{n}\ast\mathbf{1}_{V}\right)  $ converging
in $\Vert\cdot\Vert_{1}$ and (if necessary, passing to a subsequence thereof)
also pointwise $a.e.$\ to $\mu\ast\mathbf{1}_{V}$. Hence
\[
\left(  \mu_{n}\ast\mathbf{1}_{V}\right)  ^{1/p}\rightarrow\left(  \mu
\ast\mathbf{1}_{V}\right)  ^{1/p}\; \text{\textit{a.e.}}%
\]
All these functions have absolute value $\leq1$ (see \cite{Bloom/Heyer},
1.4.6) and have support in the compact set $\mathrm{supp\,}(\mu)\ast
\mathrm{supp\,}(\mathbf{1}_{V})$ (see \cite{Bloom/Heyer}), 1.2.12), hence are
dominated by $h=\mathbf{1}_{\mathrm{supp\,}(\mu)\ast\mathrm{supp\,}%
(\mathbf{1}_{V})}$. There are $\beta_{k}>0$ and $y_{k}\in K$ such that
$h\leq\sum_{k=1}^{l}\beta_{k}\left(  \tau_{y_{k}}\mathbf{1}_{V}\right)
^{1/p}$, so
\[
\Vert f\,h\Vert_{p}\leq\sum_{k=1}^{l}\beta_{k}\Vert f\left(  \, \tau_{y_{k}%
}\mathbf{1}_{V}\right)  ^{1/p}\Vert_{p}<\infty.
\]
By dominated convergence we obtain $\Vert f(\mu_{n}\ast\mathbf{1}_{V}%
)^{1/p}-f(\mu\ast\mathbf{1}_{V})^{1/p}\Vert_{p}\rightarrow0$. Now, since
$\mu_{n}$ is a convex combination $\sum_{j=1}^{m}\gamma_{j}\varepsilon_{x_{j}%
}$, we have
\[
\Vert f\left(  \mu_{n}\ast\mathbf{1}_{V}\right)  ^{1/p}\Vert_{p}^{p}=\Vert
f\left(  \sum_{j=1}^{m}\gamma_{j}\tau_{{x_{j}^{-}}}\mathbf{1}_{V}\right)
^{1/p}\Vert_{p}^{p}=\int\left\vert f\right\vert ^{p}\sum_{j=1}^{m}\gamma
_{j}\tau_{{x_{j}^{-}}}\mathbf{1}_{V}\,d\omega_{K}%
\]%
\[
=\sum_{j=1}^{m}\gamma_{j}\Vert f\, \left(  \tau_{x_{j}^{-}}\mathbf{1}%
_{V}\right)  ^{\frac{1}{p}}\Vert_{p}^{p}\leq\sum_{j=1}^{m}\gamma_{j}\Vert
f\Vert_{p,\infty,V}^{p}=\Vert f\Vert_{p,\infty,V}^{p}.
\]
Hence $\Vert f\left(  \mu\ast\mathbf{1}_{V}\right)  ^{1/p}\Vert_{p}\leq\Vert
f\Vert_{p,\infty,V}$ as asserted.
\end{proof}


\begin{remark}
All of the results obtained so far hold for a large class of commutative
hypergroups, in particular for strong hypergroups, and hence also for those
examples to be considered below. Furthermore, much of this section extends to
some non-commutative hypergroups. A version of Lemma \ref{Property*} holds
without the assumption that $K$ is commutative. Instead, we assume that there
is a relatively compact neighbourhood $V$ of the identity with the property
that $\mathbf{1}_{V}$ is central in the convolution algebra $L^{1}(K)$ and
hence in the measure algebra on $K$. The conclusion of the lemma then holds
for neighbourhoods $U$ of $e$ that include the support of the product
$\mathbf{1}_{V}^{\sim}\ast\mathbf{1}_{V}$.
The centrality assumption
implies that $K$ is unimodular. In particular, $(\mathbf{1}_{V})^{\ast
}=\mathbf{1}_{V}^{\sim}$ (as in the commutative case). Therefore the proof of
the lemma remains almost the same
(replace the sentence concerning the supports of the $\varepsilon_{x^{-}}%
\ast\varepsilon_{y}$ up to and including inequality (\ref{nesting}) by "Let
$h=\mathbf{1}_{V}^{\sim}\ast\mathbf{1}_{V}$."). With the same modified
hypothesis, Corollary \ref{special case} holds with no change in its proof.
For Corollary \ref{bad}
we also require that the support of $\mathbf{1}_{V}^{\sim}\ast\mathbf{1}_{V}$
be contained in the interior of $U$, rather than just in $U$. In the proof of
Corollary \ref{bad} take $W$ equal to this support. Then for such
$U$,\ Theorem \ref{NormCase} and hence Remark \ref{translation} as well as
Corollary \ref{Amalgam equivalence} for even $p$ also hold. For $p=\infty$,
Corollary \ref{Infinity norm} and hence the corresponding part of Corollary
\ref{Amalgam equivalence} hold on general hypergroups (without any centrality
assumption):

Let $f\in\left(  L^{1},\ell^{\infty}\right)  \left(  K\right)  $ be of
positive type. If $U$ is a relatively compact neighbourhood of $e$ and
$f_{\iota}=k_{\iota}\ast f\ast k_{\iota}^{-}$ where the $k_{\iota}$\ are as in
the proof of Corollary \ref{bad}, take $\iota$ large enough so that
$\operatorname*{supp}\left(  k_{\iota}^{\ast}\ast k_{\iota}\right)  \subset
U$. Then (see Remark \ref{AmalgamExtension}) $f_{\iota}$ is continuous,
positive definite and bounded, so by $\cite{Bloom/Heyer}$, Lemma 4.1.3(g) for
the first equality and (\ref{Left-hand version}) for the third equality below,
we have%
\begin{align*}
\left\Vert f_{\iota}\right\Vert _{\infty}  &  =k_{\iota}\ast f\ast k_{\iota
}^{-}\left(  e\right) \\
&  =\int\left(  k_{\iota}\ast f\right)  k_{\iota}\,d\omega_{K}\\
&  =\int f\cdot\left(  k_{\iota}^{\ast}\ast k_{\iota}\right)  d\omega_{K}\\
&  \leq\left\Vert f\mathbf{1}_{U}\right\Vert _{\infty}\left\Vert k_{\iota
}^{\ast}\ast k_{\iota}\right\Vert _{1}\\
&  \leq\left\Vert f\mathbf{1}_{U}\right\Vert _{\infty}.
\end{align*}
Since $f_{\iota}\rightarrow f$ locally in $L^{1}-$norm (that is, $\left\Vert
\left(  f_{\iota}-f\right)  \mathbf{1}_{C}\right\Vert _{1}\rightarrow0$ for
every compact $C\subset K$), we obtain $\left\Vert f\right\Vert _{\infty}%
\leq\left\Vert f\mathbf{1}_{U}\right\Vert _{\infty}$.
\end{remark}

\section{Hypergroups on $\mathbb{R}_{+}\label{hypergroups}$}

In this section we consider some hypergroups on $\mathbb{R}_{+}$ to which all
of Section 2 applies. For one of them we show that the version of Wiener's
theorem presented in \cite{Fournier1} for locally compact abelian groups also
holds (Theorem \ref{transforms} below ),
as indeed do other positive results about translation, convolution and Fourier
transforms, which we need for the proof of the theorem.

\subsection{Bessel-Kingman hypergroups}

For these hypergroups the reader is referred to \cite{Bloom/Heyer}, Section
3.5.61, but we give here some basic properties. Let $\alpha>-\frac{1}{2}%
.$\ For $x,y\in\mathbb{R}_{+}$\ consider the convolution%
\[
\varepsilon_{x}\ast_{\alpha}\varepsilon_{0}=\varepsilon_{x}=\varepsilon
_{0}\ast_{\alpha}\varepsilon_{x}%
\]
and for $x,y>0,$
\[
\varepsilon_{x}\ast_{\alpha}\varepsilon_{y}\left(  f\right)  =\int_{\left\vert
x-y\right\vert }^{x+y}K_{\alpha}\left(  x,y,z\right)  f\left(  z\right)
z^{2\alpha+1}\,dz,\,f\in C_{0}\left(  \mathbb{R}_{+}\right)
\]
where%
\[
K_{\alpha}\left(  x,y,z\right)  :=\frac{\Gamma\left(  \alpha+1\right)
}{\Gamma\left(  \frac{1}{2}\right)  \Gamma\left(  \alpha+\frac{1}{2}\right)
2^{2\alpha-1}}\frac{\left[  \left(  z^{2}-\left(  x-y\right)  ^{2}\right)
\left(  \left(  x+y\right)  ^{2}-z^{2}\right)  \right]  ^{\alpha-\frac{1}{2}}%
}{\left(  xyz\right)  ^{2\alpha}}.
\]
Then $\left(  \mathbb{R}_{+},\ast_{\alpha}\right)  $\ is a commutative
hypergroup with the identity involution and Haar measure $\omega_{\alpha
}\left(  dz\right)  =z^{2\alpha+1}dz.$\ Its characters are given by
$\varphi_{\lambda}\left(  x\right)  :=j_{\alpha}\left(  \lambda x\right)  $,
$x\in\mathbb{R}_{+}$\ for each $\lambda\geq0$\ where $j_{\alpha}$\ denotes the
modified Bessel function of order $\alpha$\ given by
\[
j_{\alpha}\left(  x\right)  :=%
{\displaystyle\sum_{k=0}^{\infty}}
\frac{\left(  -1\right)  ^{k}\Gamma\left(  \alpha+1\right)  }{2^{2k}%
k!\Gamma\left(  \alpha+k+1\right)  }x^{2k},\text{ }x\in\mathbb{R}.
\]
Note that $\varphi_{0}\equiv1.$\ 

It is well known that $\left(  \mathbb{R}_{+},\ast_{\alpha}\right)
\cong\left(  \mathbb{R}_{+},\ast_{\alpha}\right)  ^{\wedge},$ where the
hypergroup isomorphism is given by $\lambda\longmapsto\varphi_{\lambda}$ (so
that $\left(  \mathbb{R}_{+},\ast_{\alpha}\right)  \ $is not only strong but
even Pontryagin); see \cite{Zeuner}, Example 7.2. Wiener's theorem as in
Corollary \ref{Amalgam equivalence} therefore holds for these Bessel-Kingman hypergroups.

For $\alpha=\frac{1}{2}$\ (the motion hypergroup) the convolution is given by
\begin{equation}
\varepsilon_{x}\ast_{\frac{1}{2}}\varepsilon_{y}\left(  f\right)  =\frac
{1}{2xy}\int_{\left\vert x-y\right\vert }^{x+y}f\left(  z\right)  z\,dz
\label{MotionConvolution}%
\end{equation}
in which case the characters are just
\[
\varphi_{\lambda}\left(  x\right)  =j_{\frac{1}{2}}\left(  \lambda x\right)  =%
{\displaystyle\sum_{k=0}^{\infty}}
\frac{\left(  -1\right)  ^{k}\Gamma\left(  \frac{3}{2}\right)  }%
{2^{2k}k!\Gamma\left(  k+\frac{3}{2}\right)  }\left(  \lambda x\right)
^{2k}=\frac{\sin\lambda x}{\lambda x},\lambda\geq0.
\]
The term `motion hypergroup' is justified by the fact that $\left(
\mathbb{R}_{+},\ast_{\frac{1}{2}}\right)  $ is isomorphic to the double coset
space $M\! \left(  3\right)
\dcoset
SO\! \left(  3\right)  .$

For $f\in L^{1}\left(  \mathbb{R}_{+},\ast_{\alpha}\right)  ,\alpha>-\frac
{1}{2},$ its Fourier transform is defined by
\[
\overset{\wedge}{f}\left(  \varphi_{\lambda}\right)  :=\int_{\mathbb{R}_{+}%
}f\, \varphi_{\lambda}\,d\omega_{\alpha}%
\]
and the convolution of two functions $f,g$\ is given by
\[
f\ast_{\alpha}g\left(  x\right)  :=\int_{\mathbb{R}_{+}}f\left(  x\ast
_{\alpha}y\right)  g\left(  y\right)  \, \omega_{\alpha}\left(  dy\right)  .
\]
Recall that
\[
\left(  f\ast_{\alpha}g\right)  ^{\wedge}=\overset{\wedge}{f}\overset{\wedge
}{g}.
\]

When $\alpha=\frac{1}{2}$ we have%
\[
\overset{\wedge}{f}\left(  \varphi_{\lambda}\right)  =\left\{
\begin{array}
[c]{ll}%
\dfrac{1}{\lambda}%
{\displaystyle\int_{\mathbb{R}_{+}}}
f\left(  x\right)  \left(  \sin\lambda x\right)  x\,dx, & \lambda\neq0,\\
& \\
\int_{\mathbb{R}_{+}}f\left(  x\right)  x^{2}\,dx, & \lambda=0.
\end{array}
\right.
\]
and, in particular,
\begin{equation}
\left(  \mathbf{1}_{\left[  0,\varepsilon\right)  }\right)  ^{\wedge}\left(
\varphi_{\lambda}\right)  =\left\{
\begin{array}
[c]{ll}%
\dfrac{1}{\lambda^{3}}\left(  \sin\lambda\varepsilon-\lambda\varepsilon
\cos\lambda\varepsilon\right)  , & \lambda\neq0,\\
& \\
\dfrac{\varepsilon^{3}}{3}, & \lambda=0.
\end{array}
\right.  \label{IndicatorFourierTransform}%
\end{equation}

\subsection{The amalgam spaces $\left(  L^{p},\ell^{q}\right)  \left(
\mathbb{R}_{+},\ast_{\alpha}\right)  $ for\newline$1\leq p,q\leq\infty$}

In preparation for Theorem \ref{transforms} in Section \ref{fournier}, we need
to develop some properties of certain \textit{discrete} amalgam spaces.
We define them so that the norms $\left\Vert \cdot\right\Vert _{p,\infty}%
$\ used in this section are equivalent to the corresponding
\textit{continuous} norms $\Vert\cdot\Vert_{p,\infty,U}$ used in Section
\ref{Wiener}, and we prove this equivalence in Section \ref{equisection}. At
the end of the current subsection, we consider other families of discrete
amalgam norms, in particular those introduced in \cite{CowlingMedaPasquale},
and show that they are mostly not equivalent to the norms that we use.

For each $n\in\mathbb{N}$ write $I_{n}:=\left[  n-1,n\right)  $ and for $1\leq
p,q\leq\infty$ define
\begin{equation}
\left\Vert f\right\Vert _{p,q}:=\left(  \sum_{n=1}^{\infty}\omega_{\alpha
}\left(  I_{n}\right)  \left(  \frac{1}{\omega_{\alpha}\left(  I_{n}\right)
}\int_{n-1}^{n}\left\vert f\right\vert ^{p}\,d\omega_{\alpha}\right)
^{\frac{q}{p}}\right)  ^{\frac{1}{q}} \label{DiscreteNorm}%
\end{equation}
with the usual convention when one or both of $p,q$\ is $\infty$, that is%
\begin{align*}
\left\Vert f\right\Vert _{\infty,q}  &  =\left(  \sum_{n=1}^{\infty}%
\omega_{\alpha}\left(  I_{n}\right)  \sup_{x\in I_{n}}\left\vert f\left(
x\right)  \right\vert ^{q}\right)  ^{\frac{1}{q}}\\
\left\Vert f\right\Vert _{p,\infty}  &  =\sup_{n}\left(  \frac{1}%
{\omega_{\alpha}\left(  I_{n}\right)  }\int_{n-1}^{n}\left\vert f\right\vert
^{p}\,d\omega_{\alpha}\right)  ^{\frac{1}{p}}%
\end{align*}
and
\[
\left\Vert f\right\Vert _{\infty,\infty}=\sup_{n}\sup_{x\in I_{n}}\left\vert
f\left(  x\right)  \right\vert =\left\Vert f\right\Vert _{\infty}.
\]
The $\left(  p,q\right)  -$amalgam space is defined as the subspace of all
measurable functions $f$\ given by
\[
\left(  L^{p},\ell^{q}\right)  \left(  \mathbb{R}_{+},\ast_{\alpha}\right)
=\left\{  f:\left\Vert f\right\Vert _{p,q}<\infty\right\}  .
\]
We have the following result.

\begin{proposition}
\label{Inclusion}Let $f$\ be a measurable function. Then for $p_{1}\leq p_{2}%
$\
\[
\left\Vert f\right\Vert _{p_{1},q}\leq\left\Vert f\right\Vert _{p_{2},q}%
\]
and for $q_{1}\geq q_{2}$%
\[
\left\Vert f\right\Vert _{p,q_{1}}\leq C\left\Vert f\right\Vert _{p,q_{2}}%
\]
where $C$\ is a constant. In particular, for $p_{1}\leq p_{2}$ and $q_{1}\geq
q_{2}$%
\[
\left(  L^{p_{2}},\ell^{q_{2}}\right)  \left(  \mathbb{R}_{+},\ast_{\alpha
}\right)  \subset\left(  L^{p_{1}},\ell^{q_{1}}\right)  \left(  \mathbb{R}%
_{+},\ast_{\alpha}\right)
\]
and
\begin{align*}
&  \left(  L^{p},\ell^{q}\right)  \left(  \mathbb{R}_{+},\ast_{\alpha}\right)
\subset L^{p}\left(  \mathbb{R}_{+},\ast_{\alpha}\right)  \cap L^{q}\left(
\mathbb{R}_{+},\ast_{\alpha}\right)  \text{ for }p\geq q,\\
& \\
&  L^{p}\left(  \mathbb{R}_{+},\ast_{\alpha}\right)  \cup L^{q}\left(
\mathbb{R}_{+},\ast_{\alpha}\right)  \subset\left(  L^{p},\ell^{q}\right)
\left(  \mathbb{R}_{+},\ast_{\alpha}\right)  \text{ for }p\leq q.
\end{align*}

\end{proposition}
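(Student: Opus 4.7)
The plan is to reduce the proposition to two standard ingredients: Jensen's inequality on each interval $I_n$ (after normalizing to a probability measure) for the $L^p$-comparison, and the inclusion $\ell^{q_2}\subset\ell^{q_1}$ of sequence spaces for the $\ell^q$-comparison, keeping track of the weights $\mu_n:=\omega_\alpha(I_n)$. The inclusions of amalgam spaces then follow by chaining these two inequalities, with the identity $\|f\|_{p,p}=\|f\|_p$ providing the bridge to the pure $L^p$ spaces.

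For $p_1\leq p_2$, observe that $\mu_n^{-1}\mathbf{1}_{I_n}\,d\omega_\alpha$ is a probability measure, so Jensen's inequality applied to the convex function $t\mapsto t^{p_2/p_1}$ yields
$$\left(\frac{1}{\mu_n}\int_{I_n}|f|^{p_1}\,d\omega_\alpha\right)^{1/p_1}\leq\left(\frac{1}{\mu_n}\int_{I_n}|f|^{p_2}\,d\omega_\alpha\right)^{1/p_2}.$$
Raising to the $q$-th power, multiplying by $\mu_n$, summing over $n$ and taking a $q$-th root gives $\|f\|_{p_1,q}\leq\|f\|_{p_2,q}$. The endpoint cases in which $p_i$ or $q$ is infinite are handled in the obvious way by replacing the corresponding average or sum by a supremum.

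For $q_1\geq q_2$, set $a_n:=\mu_n^{1/q_2}\bigl(\mu_n^{-1}\int_{I_n}|f|^p\,d\omega_\alpha\bigr)^{1/p}$, so $\sum_n a_n^{q_2}=\|f\|_{p,q_2}^{q_2}$, and a direct rearrangement yields
$$\|f\|_{p,q_1}^{q_1}=\sum_n \mu_n^{\,1-q_1/q_2}\,a_n^{q_1}.$$
Since $\alpha>-\tfrac12$, the density $z^{2\alpha+1}$ is increasing on $\mathbb{R}_+$, hence the weights $\mu_n$ are increasing in $n$ and bounded below by $\mu_1=1/(2\alpha+2)>0$. Because $1-q_1/q_2\leq 0$, one has $\mu_n^{1-q_1/q_2}\leq\mu_1^{1-q_1/q_2}$, and the elementary inclusion $\sum a_n^{q_1}\leq(\sum a_n^{q_2})^{q_1/q_2}$ combined with a $q_1$-th root gives $\|f\|_{p,q_1}\leq C\|f\|_{p,q_2}$ with $C=\mu_1^{1/q_1-1/q_2}$.

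The inclusion $(L^{p_2},\ell^{q_2})\subset(L^{p_1},\ell^{q_1})$ is now immediate by chaining. Noting that $\|f\|_{p,p}=\|f\|_p$ by Fubini's theorem applied to (\ref{DiscreteNorm}), when $p\geq q$ the second inequality (with $q_1=p$, $q_2=q$) gives $\|f\|_p\leq C\|f\|_{p,q}$ and the first (with $p_1=q$, $p_2=p$) gives $\|f\|_q\leq\|f\|_{p,q}$, proving $(L^p,\ell^q)\subset L^p\cap L^q$; the case $p\leq q$ is symmetric. The main subtlety is the weighted $\ell^q$-comparison: the unweighted inclusion $\ell^{q_2}\subset\ell^{q_1}$ cannot be applied directly because of the factors $\mu_n$, and the constant $C>1$ appearing above cannot in general be removed (as shown by taking $f$ supported on $I_1$); the argument crucially relies on the monotonicity of the Haar weight, which is special to the Bessel--Kingman setting with $\alpha>-\tfrac12$.
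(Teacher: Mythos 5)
Your proof is correct and follows essentially the same route as the paper, whose proof is a one-liner invoking H\"older's inequality together with the uniform lower bound $\omega_{\alpha}(I_n)\geq C>0$: your Jensen step on the normalized measures is H\"older in disguise, and your use of $\mu_n\geq\mu_1>0$ (via monotonicity of the density) is exactly that lower bound. The additional remarks (the explicit constant $\mu_1^{1/q_1-1/q_2}$ and the example on $I_1$ showing $C=1$ cannot work) are correct but not needed for the statement.
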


\begin{proof}
This is straightforward using H\"{o}lder's inequality together with the
property that $\omega_{\alpha}\left(  I_{n}\right)  \geq C>0$ for all
$n.\vspace{0.25cm}$
\end{proof}

Note that $\left(  L^{\infty},\ell^{1}\right)  \left(  \mathbb{R}_{+}%
,\ast_{\alpha}\right)  $ is the smallest amalgam space and $\left(  L^{1}%
,\ell^{\infty}\right)  \left(  \mathbb{R}_{+},\ast_{\alpha}\right)  $ is the largest.

\begin{remark}
We now use indicator functions on subintervals of $I_{n}$ to show that for
$p\neq q$ our amalgam norms are not equivalent to the discrete amalgam norms
in $\cite{CowlingMedaPasquale}$, which are computed on sets with measures
uniformly bounded away from $0$ and $\infty$. There is no division or
multiplication by measures of tiles in the computation of those norms. In the
present case we obtain norms equivalent to those in
$\cite{CowlingMedaPasquale}$ by splitting $\mathbb{R}_{+}$\ into disjoint
intervals of Haar measure $1$; at least $\omega_{\alpha}\left(  I_{n}\right)
-2$ of these subintervals are included in $I_{n}$. Let $f$ be the indicator
function of one such subinterval. Then the norm of $f$ in our space $\left(
L^{p},\ell^{q}\right)  \left(  \mathbb{R}_{+},\ast_{\alpha}\right)  $\ is
$\omega_{\alpha}\left(  I_{n}\right)  ^{1/q-1/p}$, while its norm in the
corresponding space in $\cite{CowlingMedaPasquale}$ is $1$. Since
$\omega_{\alpha}\left(  I_{n}\right)  \rightarrow\infty$ as $n\rightarrow
\infty$, these norms are not equivalent unless $p=q$.

Both families of discrete amalgams on Bessel-Kingman hypergroups are
constructed in such a way as to have norms equal to the usual $L^{p}$ norm,
and hence to each other when $p=q$. In the
examples above, the
functions $f$ are not positive definite, and we do not know whether there are
corresponding examples involving positive definite functions. Finally, most
other
choices give amalgam norms that are not
equivalent to
ours,
for example the
partition choice having the $I_{n}$ without normalization, and the
continuous amalgam norm as in Definition \ref{Localamalgam}
but without the $1/p$ power.
The only cases where our discrete amalgam norm is equivalent to the one
without weights are those where $p=q$, and the only case
where the two kinds of continuous amalgam norms are equivalent is that where
$p=1$ (see the end of Remark \ref{remark20}).
\end{remark}

\subsection{Equivalence of the discrete amalgam norm $\left\Vert
\cdot\right\Vert _{p,\infty}$ with the continuous\ amalgam norm defined by
translations in the case $\alpha=\frac{1}{2}$}

\label{equisection}For the following subsections of Section \ref{hypergroups}%
\ we only consider the Bessel-Kingman hypergroup\ $\left(  \mathbb{R}_{+}%
,\ast_{\frac{1}{2}}\right)  $\ (and to simplify the notation we write $\omega$
in place of $\omega_{\frac{1}{2}}$). Values of $\alpha>\frac{1}{2}$\ are
treated in \cite{Innig}.

\begin{proposition}
\label{Equivalence1}For $p\in\left[  1,\infty\right)  ,$%
\[
\left\Vert f\right\Vert _{p,\infty}\leq C\sup_{y\in\left[  0,\infty\right)
}\left(  \int\left\vert f\right\vert ^{p}\tau_{y}\mathbf{1}_{\left[
0,1\right]  }\,d\omega\right)  ^{1/p}.
\]

\end{proposition}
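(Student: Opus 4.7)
The plan is to find, for each integer $n \geq 1$, a point $y_n \geq 0$ such that $\tau_{y_n}\mathbf{1}_{[0,1]}(x) \geq c/\omega(I_n)$ uniformly for $x \in I_n$, with a positive constant $c$ independent of $n$. Granted this, integrating against $|f|^p$ gives
\[
\int_{n-1}^n |f|^p\,d\omega \leq \frac{\omega(I_n)}{c}\int |f|^p\, \tau_{y_n}\mathbf{1}_{[0,1]}\,d\omega \leq \frac{\omega(I_n)}{c}\sup_{y \geq 0}\int |f|^p\, \tau_y \mathbf{1}_{[0,1]}\,d\omega;
\]
dividing by $\omega(I_n)$, taking the supremum over $n$, and extracting a $p$-th root delivers the statement with $C = c^{-1/p}$.

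To produce $y_n$, first unpack the translate using (\ref{MotionConvolution}): for $x, y > 0$,
\[
\tau_y \mathbf{1}_{[0,1]}(x) = \frac{1}{2xy}\int_{|x-y|}^{x+y}\mathbf{1}_{[0,1]}(z)\,z\,dz,
\]
which vanishes when $|x-y| \geq 1$ and equals $\left(\min(x+y,1)^2 - (x-y)^2\right)/(4xy)$ otherwise. The natural choice is $y_n := n - \frac{1}{2}$. For $n \geq 2$ and $x \in I_n$ one has $|x - y_n| \leq \frac{1}{2}$ and $x + y_n \geq 2n - \frac{3}{2} \geq 1$, so
\[
\tau_{y_n}\mathbf{1}_{[0,1]}(x) = \frac{1-(x-y_n)^2}{4\, x\, y_n} \geq \frac{3}{16\, n(n - 1/2)}.
\]
Since $\omega(I_n) = (n^3 - (n-1)^3)/3$ is of order $n^2$, the product $\omega(I_n)\,\tau_{y_n}\mathbf{1}_{[0,1]}(x)$ is bounded below by a positive constant uniformly for $n \geq 2$ and $x \in I_n$. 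For the exceptional case $n=1$, a direct computation with $y_1 = 1/2$ gives $\tau_{1/2}\mathbf{1}_{[0,1]} \equiv 1$ on $[0, 1/2)$ and $\tau_{1/2}\mathbf{1}_{[0,1]}(x) = (1-(x-1/2)^2)/(2x) \geq 3/8$ on $[1/2, 1)$, which again dominates a constant times $1/\omega(I_1) = 3$. Taking $c$ to be the infimum of the constants produced by these cases finishes the construction.

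The main technical obstacle is verifying uniformity of $c$ across \emph{all} $n$: the large-$n$ regime, where $\tau_{y_n}\mathbf{1}_{[0,1]} \asymp 1/n^2 \asymp 1/\omega(I_n)$, is benign, but the small-$n$ boundary needs explicit checking because the formula for $\tau_y\mathbf{1}_{[0,1]}$ branches on whether $x+y$ exceeds $1$ and the factor $1/(xy)$ is apparently singular near $x = 0$. Once these cases are resolved, the simple integration above completes the proof.
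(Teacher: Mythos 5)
Your proposal is correct and follows essentially the same route as the paper: translate $\mathbf{1}_{[0,1]}$ to the midpoints of the intervals $I_n$ (the paper uses $y=n+\tfrac12$ on $I_{n+1}$ and $y=0$ on $I_1$, you use $y_n=n-\tfrac12$ and $y_1=\tfrac12$), bound the translate below by a constant multiple of $1/\omega(I_n)$ on $I_n$ via the explicit branch formula, and integrate against $|f|^p$. The computations and the handling of the $n=1$ and large-$n$ cases match the paper's argument, so nothing further is needed.
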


\begin{proof}
We have using (\ref{MotionConvolution})%
\begin{align}
\tau_{y}\mathbf{1}_{\left[  0,1\right]  }\left(  x\right)   &  =\mathbf{1}%
_{\left[  0,1\right]  }\left(  y\ast_{\frac{1}{2}}x\right) \nonumber\\
&  =\varepsilon_{y}\ast_{\frac{1}{2}}\varepsilon_{x}\left(  \mathbf{1}%
_{\left[  0,1\right]  }\right) \nonumber\\
&  =\frac{1}{2xy}\int_{\left[  \left\vert x-y\right\vert ,x+y\right]
\cap\left[  0,1\right]  }t\,dt\nonumber\\
&  =\left\{
\begin{array}
[c]{cc}%
1, & x+y\leq1,\\
& \\
\frac{1}{4xy}\left(  1-\left(  x-y\right)  ^{2}\right)  , & x+y>1\text{ and
}\left\vert x-y\right\vert <1,\\
& \\
0, & \left\vert x-y\right\vert \geq1.
\end{array}
\right.  \label{Three cases}%
\end{align}
For $y=n+\frac{1}{2},n\in\mathbb{N},$ we obtain%
\[
\tau_{n+\frac{1}{2}}\mathbf{1}_{\left[  0,1\right]  }\left(  x\right)
=\left\{
\begin{array}
[c]{cc}%
\frac{1-\left(  n+\frac{1}{2}-x\right)  ^{2}}{4x\left(  n+\frac{1}{2}\right)
}, & \left\vert n+\frac{1}{2}-x\right\vert <1,\\
& \\
0, & \left\vert n+\frac{1}{2}-x\right\vert \geq1.
\end{array}
\right.
\]
On the interval $I_{n+1}$ this is larger than%
\[
\frac{3/4}{4\left(  n+\frac{1}{2}\right)  \left(  n+1\right)  }\geq\frac
{3/16}{2\omega\left(  I_{n+1}\right)  }%
\]
which holds for all $n\in\mathbb{N}$. On $I_{1}$ we have the trivial estimate
$\tau_{0}\mathbf{1}_{\left[  0,1\right]  }\geq1=\frac{1}{3}\frac{1}%
{\omega\left(  I_{1}\right)  }$, and putting these together gives%
\[
\sup_{y\in\left[  0,\infty\right)  }\left(  \int\left\vert f\right\vert
^{p}\tau_{y}\mathbf{1}_{\left[  0,1\right]  }\,d\omega\right)  ^{1/p}\geq
\frac{3}{32}\sup_{n\in\mathbb{N}}\left(  \int_{I_{n}}\frac{1}{\omega\left(
I_{n}\right)  }\left\vert f\right\vert ^{p}\,d\omega\right)  ^{1/p}.
\]

\hspace{12cm}
\end{proof}

\begin{remark}
\label{remark20} In Proposition \ref{Equivalence1} we compared the norm
$\left\Vert f\right\Vert _{p,\infty}$\ with the continuous amalgam norm
$\|f\|_{p, \infty, [0,1]} = \sup_{y\in\left[  0,\infty\right)  }\left\Vert
f\left(  \tau_{y}\mathbf{1}_{\left[  0,1\right]  }\right)  ^{\frac{1}{p}%
}\right\Vert _{p}$ for $p\in\left[  1,\infty\right)  $. We consider the same
comparison with $p=\infty.$ Letting%
\[
A\left(  y\right)  =\left\{  t\in\left[  0,\infty\right)  :\tau_{y}%
\mathbf{1}_{\left[  0,1\right]  }\left(  t\right)  >0\right\}
\]
we have $\|f\|_{\infty,\infty, [0,1]} = \sup_{y \in[0, \infty)}\left\|
f\mathbf{1}_{A\left(  y\right)  }\right\|  _{\infty}$.
Clearly $A\left(  y\right)  $\ is an open neighbourhood of $y$\ and hence%
\[
\sup_{y \in[0, \infty)}\left\|  f\mathbf{1}_{A\left(  y\right)  }\right\|
_{\infty}
=\left\Vert f\right\Vert _{\infty}=\left\Vert f\right\Vert _{\infty,\infty}.
\]
This means that for $p=\infty$\ we have $C=1$ and in fact equality in
Proposition \ref{Equivalence1}.

We warn the reader that for every $p\in(1,\infty]$ the seemingly similar (and,
in the group case, identical) norm $\sup_{y\in\left[  0,\infty\right)
}\left\Vert f
\tau_{y}\mathbf{1}_{\left[  0,1\right]  }
\right\Vert _{p}$ is smaller and not equivalent to $\sup_{y\in\left[
0,\infty\right)  }\left\Vert f\left(  \tau_{y}\mathbf{1}_{\left[  0,1\right]
}\right)  ^{\frac{1}{p}}\right\Vert _{p}
$. In fact, for this smaller norm, Proposition \ref{Equivalence1} fails for
all choices of the constant $C$. The reason for this is that the sup-norm of
$\tau_{y}\mathbf{1}_{\left[  0,1\right]  }$ tends to zero as $y\rightarrow
\infty$.


\end{remark}

\begin{proposition}
\label{Equivalence2}For $p\in\left[  1,\infty\right)  ,$%
\[
\left\Vert f\right\Vert _{p,\infty}\geq C\sup_{y\in\left[  0,\infty\right)
}\left(  \int\left\vert f\right\vert ^{p}\tau_{y}\mathbf{1}_{\left[
0,1\right]  }\,d\omega\right)  ^{1/p}.
\]

\end{proposition}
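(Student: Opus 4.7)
The plan is to exploit the explicit formula (\ref{Three cases}) for $\tau_y\mathbf{1}_{[0,1]}$ derived in the proof of Proposition \ref{Equivalence1}, together with the Haar density $d\omega(x)=x^{2}\,dx$, in order to reduce the continuous integral $\int|f|^{p}\tau_{y}\mathbf{1}_{[0,1]}\,d\omega$ to a finite sum over the tiles $I_{n}$ that meet the support $[\max(0,y-1),y+1]$. The crucial observation is that for large $y$ the $1/(xy)$ decay of the translate exactly cancels the $x^{2}$ growth of the Haar density, leaving a uniformly bounded Lebesgue density on the support.

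Concretely, for $y\geq 1$, formula (\ref{Three cases}) specialises to $\tau_{y}\mathbf{1}_{[0,1]}(x)=(1-(x-y)^{2})/(4xy)$ on $(y-1,y+1)$ and $0$ elsewhere, so
\[
\tau_{y}\mathbf{1}_{[0,1]}(x)\,d\omega(x)=\frac{(1-(x-y)^{2})\,x}{4y}\,dx\leq\tfrac{1}{2}\,\mathbf{1}_{(y-1,y+1)}(x)\,dx,
\]
using $x/y\leq(y+1)/y\leq 2$ on the support. For $y\geq 2$ we further have $x\geq y-1\geq 1$, hence $dx\leq d\omega/(y-1)^{2}$, so $\int_{y-1}^{y+1}|f|^{p}\,dx$ can be bounded by summing $\int_{I_{n}}|f|^{p}\,d\omega$ over the at most three tiles $I_{n}$ meeting $[y-1,y+1]$ and dividing by $(y-1)^{2}$. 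Since each such $n$ satisfies $n\leq y+2$ and $\omega(I_{n})\leq n^{2}\leq(y+2)^{2}\leq 16(y-1)^{2}$, this yields an absolute bound $\int|f|^{p}\tau_{y}\mathbf{1}_{[0,1]}\,d\omega\leq C_{1}\|f\|_{p,\infty}^{p}$ for $y\geq 2$.

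The residual range $y\in[0,2]$ is handled by the trivial estimate $\tau_{y}\mathbf{1}_{[0,1]}\leq 1$ and the fact that the support lies in $I_{1}\cup I_{2}\cup I_{3}$, yielding the bound $(\omega(I_{1})+\omega(I_{2})+\omega(I_{3}))\|f\|_{p,\infty}^{p}$. Combining the two ranges, taking $p$-th roots, and then the supremum in $y$, gives the proposition with a concrete constant $C$.

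The main obstacle is that $\omega(I_{n})$ grows like $n^{2}$, so the naive estimate $\tau_{y}\mathbf{1}_{[0,1]}\leq 1$ combined with tile coverage alone would \emph{not} give a $y$-uniform bound. Only the exact $1/(xy)$ decay of $\tau_{y}\mathbf{1}_{[0,1]}$, produced by the specific convolution formula (\ref{MotionConvolution}), rescues the estimate; this is one point where the choice $\alpha=\tfrac{1}{2}$ is genuinely used.
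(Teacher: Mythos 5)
Your proof is correct and takes essentially the same route as the paper: a case split in $y$, coverage of the support of $\tau_{y}\mathbf{1}_{[0,1]}$ by at most three tiles $I_{n}$, and the key cancellation of the $1/(4xy)$ factor in (\ref{Three cases}) against the quadratic growth of $\omega(I_{n})$. The only difference is cosmetic: you bound the Lebesgue density $(1-(x-y)^{2})\,x/(4y)$ by $\tfrac12$ and convert back to $d\omega$ using $x\geq y-1$, whereas the paper compares $\tau_{y}\mathbf{1}_{[0,1]}$ directly with $3/\omega(I_{j})$ on each of the nearby tiles; the resulting constants are comparable.
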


\begin{proof}
(i) For $y\in\left[  0,1\right)  $ the expression in (\ref{Three cases}) takes
the simpler form%
\[
\tau_{y}\mathbf{1}_{\left[  0,1\right]  }\left(  x\right)  =\left\{
\begin{array}
[c]{cc}%
1, & x\leq1-y,\\
& \\
\frac{1}{4xy}\left(  1-\left(  x-y\right)  ^{2}\right)  \leq1, & 1-y<x<1+y,\\
& \\
0, & x\geq1+y.
\end{array}
\right.
\]
Since $\tau_{y}\mathbf{1}_{\left[  0,1\right]  }\leq\mathbf{1}_{\left[
0,2\right)  }$ this gives%
\begin{align*}
\int\left\vert f\right\vert ^{p}\tau_{y}\mathbf{1}_{\left[  0,1\right]
}\,d\omega &  \leq\int\left\vert f\right\vert ^{p}\mathbf{1}_{\left[
0,1\right)  }\,d\omega+\int\left\vert f\right\vert ^{p}\mathbf{1}_{\left[
1,2\right)  }\,d\omega\\
&  \leq\frac{1}{\omega\left(  I_{1}\right)  }\int\left\vert f\right\vert
^{p}\mathbf{1}_{\left[  0,1\right)  }\,d\omega+\frac{3}{\omega\left(
I_{2}\right)  }\int\left\vert f\right\vert ^{p}\mathbf{1}_{\left[  1,2\right)
}\,d\omega\\
&  =\int_{I_{1}}\frac{1}{\omega\left(  I_{1}\right)  }\left\vert f\right\vert
^{p}\,d\omega+3\int_{I_{2}}\frac{1}{\omega\left(  I_{2}\right)  }\left\vert
f\right\vert ^{p}\,d\omega
\end{align*}
the second inequality holding since $\omega\left(  I_{1}\right)  =\frac{1}%
{3}<1$ and $\omega\left(  I_{2}\right)  =\frac{7}{3}<3.$ Hence%
\begin{align*}
\left(  \int\left\vert f\right\vert ^{p}\tau_{y}\mathbf{1}_{\left[
0,1\right]  }\,d\omega\right)  ^{1/p}  &  \leq\left(  \int_{I_{1}}\frac
{1}{\omega\left(  I_{1}\right)  }\left\vert f\right\vert ^{p}\,d\omega\right)
^{1/p}+3^{1/p}\left(  \int_{I_{2}}\frac{1}{\omega\left(  I_{2}\right)
}\left\vert f\right\vert ^{p}\,d\omega\right)  ^{1/p}\\
&  \leq\left(  1+3^{1/p}\right)  \left\Vert f\right\Vert _{p,\infty}%
\end{align*}

(ii) For $y\in\left[  1,2\right)  $ we have $\tau_{y}\mathbf{1}_{\left[
0,1\right]  }\leq\mathbf{1}_{\left[  0,3\right)  }$ which leads to%
\begin{align*}
&  \hspace{-2cm}\int\left\vert f\right\vert ^{p}\tau_{y}\mathbf{1}_{\left[
0,1\right]  }\,d\omega\\
&  \leq\int_{I_{1}}\left\vert f\right\vert ^{p}\,d\omega+\int_{I_{2}%
}\left\vert f\right\vert ^{p}\,d\omega+\int_{I_{3}}\left\vert f\right\vert
^{p}\,d\omega\\
&  \leq\int_{I_{1}}\frac{1}{\omega\left(  I_{1}\right)  }\left\vert
f\right\vert ^{p}\,d\omega+3\int_{I_{2}}\frac{1}{\omega\left(  I_{2}\right)
}\left\vert f\right\vert ^{p}\,d\omega+7\int_{I_{3}}\frac{1}{\omega\left(
I_{3}\right)  }\left\vert f\right\vert ^{p}\,d\omega
\end{align*}
since $\omega\left(  I_{3}\right)  <7,$\ and hence%
\begin{align*}
&  \hspace{-2cm}\left(  \int\left\vert f\right\vert ^{p}\tau_{y}%
\mathbf{1}_{\left[  0,1\right]  }\,d\omega\right)  ^{1/p}\\
&  \leq\left(  \int_{I_{1}}\frac{1}{\omega\left(  I_{1}\right)  }\left\vert
f\right\vert ^{p}\,d\omega\right)  ^{1/p}+3^{1/p}\left(  \int_{I_{2}}\frac
{1}{\omega\left(  I_{2}\right)  }\left\vert f\right\vert ^{p}\,d\omega\right)
^{1/p}+\\
&  +7^{1/p}\left(  \int_{I_{3}}\frac{1}{\omega\left(  I_{3}\right)
}\left\vert f\right\vert ^{p}\,d\omega\right)  ^{1/p}\\
&  \leq\left(  1+3^{1/p}+7^{1/p}\right)  \left\Vert f\right\Vert _{p,\infty}.
\end{align*}

(iii) For $y\geq2$ we have%
\[
\tau_{y}\mathbf{1}_{\left[  0,1\right]  }\left(  x\right)  =\left\{
\begin{array}
[c]{cc}%
\frac{1}{4xy}\left(  1-\left(  x-y\right)  ^{2}\right)  , & y-1<x<y+1,\\
& \\
0, & \text{otherwise.}%
\end{array}
\right.
\]
If $y\in I_{k},$ then $k\geq3$ and $\left(  y-1,y+1\right)  $ intersects at
most $I_{k-1},I_{k},I_{k+1}.$ For $x\in\left(  y-1,y+1\right)  $ we have%
\[
4xy>4\left(  y-1\right)  y>4\left(  k-2\right)  \left(  k-1\right)  .
\]
Now $k\geq3$ implies $4\left(  k-2\right)  \geq k$ and $3\left(  k-1\right)
\geq k+2$ so that%
\[
4xy\geq\frac{1}{3}k\left(  k+2\right)  \geq\frac{1}{3}\left(  k^{2}+k+\frac
{1}{3}\right)  =\frac{1}{3}\omega\left(  I_{k+1}\right)  \geq\frac{1}{3}%
\omega\left(  I_{k}\right)  \geq\frac{1}{3}\omega\left(  I_{k-1}\right)
\]
Thus we obtain for $j=k-1,k,k+1$%
\begin{align*}
\int_{I_{j}}\left\vert f\right\vert ^{p}\tau_{y}\mathbf{1}_{\left[
0,1\right]  }\,d\omega &  =\int_{I_{j}}\left\vert f\right\vert ^{p}\frac
{1}{4xy}\left(  1-\left(  x-y\right)  ^{2}\right)  \mathbf{1}_{\left(
y-1,y+1\right)  }\,d\omega\\
&  \leq3\int_{I_{j}}\frac{1}{\omega\left(  I_{j}\right)  }\left\vert
f\right\vert ^{p}\,d\omega
\end{align*}
and%
\[
\left(  \int\left\vert f\right\vert ^{p}\tau_{y}\mathbf{1}_{\left[
0,1\right]  }\,d\omega\right)  ^{1/p}\leq3^{1/p}\sum_{j=k-1}^{k+1}\left(
\int_{I_{j}}\frac{1}{\omega\left(  I_{j}\right)  }\left\vert f\right\vert
^{p}\,d\omega\right)  ^{1/p}\leq3^{1+1/p}\left\Vert f\right\Vert _{p,\infty}.
\]

(iv) Taking $C$ to be the maximum of the constants in (i)-(iii) we have%
\[
\left(  \int\left\vert f\right\vert ^{p}\tau_{y}\mathbf{1}_{\left[
0,1\right]  }\,d\omega\right)  ^{1/p}\leq C\left\Vert f\right\Vert _{p,\infty}%
\]
for all $y\in\left[  0,\infty\right)  $ and hence%
\[
\sup_{y\in\left[  0,\infty\right)  }\left(  \int\left\vert f\right\vert
^{p}\tau_{y}\mathbf{1}_{\left[  0,1\right]  }\,d\omega\right)  ^{1/p}\leq
C\left\Vert f\right\Vert _{p,\infty}.
\]

\hspace{12cm}
\end{proof}

\subsection{Functions that are square integrable on a neighbourhood of the
identity\label{SquareInt}}

\label{fournier}

For $p=2$ we have the following characterisation along the lines of
\cite{Fournier1}, Theorem 3.1.

\begin{theorem}
\label{transforms} For $f\in L^{1}\left(  \mathbb{R}_{+},\ast_{\frac{1}{2}%
}\right)  $ with $\overset{\wedge}{f}\geq0$ the following are equivalent:

\begin{enumerate}
\item $f$ is square integrable in a neighbourhood of the identity;

\item $\overset{\wedge}{f}\in\left(  L^{1},\ell^{2}\right)  \left(
\mathbb{R}_{+},\ast_{\frac{1}{2}}\right)  ;$

\item $f\in\left(  L^{2},\ell^{\infty}\right)  \left(  \mathbb{R}_{+}%
,\ast_{\frac{1}{2}}\right)  .$
\end{enumerate}
\end{theorem}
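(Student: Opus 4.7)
The plan is to handle $(1) \Leftrightarrow (3)$ separately using the Wiener-type result of Section \ref{Wiener}, and then to establish $(2) \Leftrightarrow (3)$ via an amalgam Hausdorff--Young inequality on the self-dual motion hypergroup.

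First I would prove $(1) \Leftrightarrow (3)$. The Bessel--Kingman hypergroup $(\mathbb{R}_{+},\ast_{1/2})$ is Pontryagin, hence strong, so $P_{b}\cdot P_{b}\subset P_{b}$ by Remark \ref{PositiveDefiniteproduct}; and since $f\in L^{1}$ with $\hat{f}\geq 0$ is equivalent to $f$ being of positive type (Remark \ref{Positive type}), Corollary \ref{Amalgam equivalence} applies with $p=2$. Thus $(1)$ --- i.e. $f\mathbf{1}_{U}\in L^{2}$ for some relatively compact neighbourhood $U$ of $0$ --- yields $\|f\|_{2,\infty,U}\leq C_{U}^{1/2}\|f\mathbf{1}_{U}\|_{2}<\infty$; the equivalence of the translation-defined continuous amalgam norm with the discrete norm $\|\cdot\|_{2,\infty}$ (Propositions \ref{Equivalence1} and \ref{Equivalence2}) then gives $(3)$. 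Conversely, $(3)$ trivially implies $(1)$, since $\int_{I_{1}}|f|^{2}\,d\omega\leq\omega(I_{1})\|f\|_{2,\infty}^{2}$.

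Next I would establish $(2) \Leftrightarrow (3)$ by exploiting Pontryagin self-duality. The core ingredient is an amalgam Hausdorff--Young inequality: the Fourier transform is bounded from $(L^{1},\ell^{2})$ into $(L^{2},\ell^{\infty})$, obtained by Riesz--Thorin interpolation between Plancherel $(L^{2}\to L^{2})$ and the trivial bound $(L^{1}\to L^{\infty})$, applied block by block on the partition $\{I_{n}\}$ with its weights $\omega(I_{n})\sim n^{2}$. The direction $(2)\Rightarrow(3)$ is then immediate, using $f=\check{\hat{f}}$ on the self-dual hypergroup. For $(3)\Rightarrow(2)$ the positivity $\hat{f}\geq 0$ is essential. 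The plan is a Fejer-type kernel argument: for each $n$ construct a nonnegative $\sigma_{n}\in L^{1}$, localised on a fixed neighbourhood of the identity in $\mathbb{R}_{+}$, with $\hat{\sigma}_{n}\geq\mathbf{1}_{I_{n}}$ pointwise. Then by Parseval
\[
\int_{I_{n}}\hat{f}\,d\omega\;\leq\;\int\hat{f}\,\hat{\sigma}_{n}\,d\omega\;=\;c\int f\,\sigma_{n}\,d\omega,
\]
whose right-hand side is bounded in terms of $\|f\|_{2,\infty}$ by the localisation of $\sigma_{n}$; squaring, weighting by $\omega(I_{n})^{-1}$ and summing in $n$ yields $\hat{f}\in(L^{1},\ell^{2})$.

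The principal obstacle is the amalgam Hausdorff--Young step itself, whose proof on the hypergroup requires a Riesz--Thorin interpolation carefully adapted to the non-uniform weights $\omega(I_{n})$ in the discrete amalgam norm, together with the equivalence with translation-defined continuous amalgam norms from Propositions \ref{Equivalence1} and \ref{Equivalence2}. A secondary but nontrivial difficulty is the construction of the Fejer-type kernels $\sigma_{n}$: the characters $\varphi_{\lambda}(x)=\sin(\lambda x)/(\lambda x)$ are not closed under pointwise multiplication, so a fixed bump cannot be modulated by a character to shift its Fourier support, as one would do on $\mathbb{R}$. One natural remedy is to use the identification of $(\mathbb{R}_{+},\ast_{1/2})$ with the double coset space $M(3)\dcoset SO(3)$ and transfer the usual $\mathbb{R}^{3}$-Fejer kernels, restricting to radial functions, together with the explicit formulas \eqref{MotionConvolution} and \eqref{IndicatorFourierTransform}.
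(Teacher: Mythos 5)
Your reduction of $(1)\Leftrightarrow(3)$ to Corollary \ref{Amalgam equivalence} together with Propositions \ref{Equivalence1} and \ref{Equivalence2} is sound and consistent with the machinery of the paper. The genuine gaps are both in your treatment of $(2)\Leftrightarrow(3)$. First, the amalgam Hausdorff--Young bound $(L^{1},\ell^{2})\rightarrow(L^{2},\ell^{\infty})$ does \emph{not} follow from Riesz--Thorin between Plancherel and the trivial $L^{1}\rightarrow L^{\infty}$ bound, ``block by block'' or otherwise: in the $(1/p,1/q)$-square the point $(1,1/2)$ is a corner, not on the segment joining $(1,1)$ and $(1/2,1/2)$, so interpolation from those two endpoints only yields the diagonal cases $p=q$; and a blockwise argument is meaningless here because the transform of $f\mathbf{1}_{I_{n}}$ is spread over the whole dual, so the block structure is not preserved. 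This corner case is exactly the hard part of the paper: it is proved in Section \ref{FourierTransform} for $(p,q)=(2,1)$ via the explicit transform \eqref{IndicatorFourierTransform} of $\mathbf{1}_{I_{1}}$, the auxiliary functions $g_{n}=3\,\mathbf{1}_{I_{1}}\ast_{\frac12}\mathbf{1}_{[n-2,n+1)}$ with $\Vert\widehat{g_{n}}\Vert_{(L^{2},\ell^{1})}\leq C\sqrt{\omega_{n}}$, and Young's inequality for amalgam convolution (which itself rests on the translation estimates of Sections \ref{Translation}--\ref{Convolution}); the case $(1,2)$ then follows by duality and the remaining cases by interpolation from the four corners. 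Your proposal skips precisely this work.

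Second, your Fej\'er-type argument for $(3)\Rightarrow(2)$ cannot deliver the $\ell^{2}$ conclusion as stated, independently of how the kernels are constructed. If $\sigma_{n}\geq0$ is supported in a fixed interval $[0,R]$, then
\[
|\widehat{\sigma_{n}}(\varphi_{\lambda})|\leq\frac{1}{\lambda}\int_{0}^{R}\sigma_{n}(x)\,x\,dx\leq\frac{\sqrt{R}}{\lambda}\,\Vert\sigma_{n}\Vert_{L^{2}(\omega)},
\]
so the requirement $\widehat{\sigma_{n}}\geq1$ on $I_{n}$ forces $\Vert\sigma_{n}\Vert_{L^{2}(\omega)}\geq(n-1)/\sqrt{R}$. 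Your estimate then only gives $\int_{I_{n}}\hat{f}\,d\pi\lesssim n\,\Vert f\Vert_{2,\infty}$, and after squaring and weighting by $\pi(I_{n})^{-1}\sim n^{-2}$ the series diverges; a uniform bound $\int_{I_{n}}\hat{f}\,d\pi\leq C$, which your summation implicitly assumes, is not obtainable this way (and is not an obvious consequence of the hypotheses, since condition (2) only requires $\pi(I_{n})^{-1/2}\int_{I_{n}}\hat{f}\,d\pi\in\ell^{2}$). The group-case proof in \cite{Fournier1} avoids blockwise uniform bounds altogether: one fixes a single bounded positive definite bump $u$ supported where $f$ is square integrable, notes $h=fu\in L^{2}$, and uses one global Plancherel identity together with $\hat{h}=\hat{f}\ast\hat{u}\gtrsim$ (local averages of $\hat{f}$); transferring that argument is what the paper's reduction to Fournier's Theorem 3.1 accomplishes, once the norm equivalences, translation bounds and amalgam Hausdorff--Young theorem are in place.
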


\begin{proof}
The proof of Theorem 3.1 in \cite{Fournier1} applies, but we need to check
that the results from \cite{Fournier 2}, \cite{FournierStewart}\ and
\cite{Holland}\ are still valid in our setting.
This requires the equivalence of the continuous\ and the discrete\ amalgam
norms, which we showed in Propositions \ref{Equivalence1} and
\ref{Equivalence2}, together with uniform boundedness of translation along
with the Hausdorff-Young theorem for these amalgam spaces. We prove these
properties in the next three sections.
\end{proof}

\subsection{Translation in $(L^{\infty},\ell^{1})(\mathbb{R}_{+},\ast
_{\frac{1}{2}})\label{Translation}$}

In this section we show that translation is uniformly bounded on the amalgam
space $(L^{\infty},\ell^{1})(\mathbb{R}_{+},\ast_{\frac{1}{2}})$. Denote the
Haar measure $\omega\left(  I_{n}\right)  $ of the interval $I_{n}\ $by
$\omega_{n}$. It is easily checked that $\omega_{n}=n^{2}-n+\frac{1}{3}.$
Given a locally integrable function $f$ on $\mathbb{R}_{+}$ let $P_{n}%
f:=f\mathbf{1}_{I_{n}}$ and consider%
\[
\tau_{y}f\left(  x\right)  =f\left(  x\ast_{\frac{1}{2}}y\right)  =\frac
{1}{2xy}\int_{\left[  \left\vert x-y\right\vert ,x+y\right]  }f(t)t\,dt.
\]
Note that $|\tau_{y}f|\leq\tau_{y}(|f|)$ pointwise, and that $\tau
_{y}(|f|)\leq\tau_{y}g$ if $|f|\leq g$ almost everywhere. We want to show
uniform boundedness of the translation operators $\tau_{y}$\ on $(L^{\infty
},\ell^{1})(\mathbb{R}_{+},\ast_{\frac{1}{2}}).$

Consider an index $n$ and a positive number $y$, and write $f_{n}%
:=\mathbf{1}_{I_{n}}$ . It will be enough to show that
\[
\Vert\tau_{y}f_{n}\Vert_{(L^{\infty},\ell^{1})}\leq C\Vert f_{n}%
\Vert_{(L^{\infty},\ell^{1})}%
\]
for a number $C$ that is independent of $y$ and $n$. Indeed, letting
$c_{n}=\Vert P_{n}f\Vert_{\infty}$ and $g=\sum_{n}c_{n}f_{n}$, we then have
that $|f|\leq g$ pointwise, and thus $\Vert\tau_{y}f\Vert_{(L^{\infty}%
,\ell^{1})}\leq\Vert\tau_{y}g\Vert_{(L^{\infty},\ell^{1})}$. But also
$\tau_{y}g\leq\sum_{n}c_{n}\tau_{y}(f_{n})$ pointwise so that
\[
\Vert\tau_{y}f\Vert_{(L^{\infty},\ell^{1})}\leq\Vert\tau_{y}g\Vert
_{(L^{\infty},\ell^{1})}\leq\sum_{n}c_{n}\Vert\tau_{y}f_{n}\Vert_{(L^{\infty
},\ell^{1})}%
\]%
\[
\leq\sum_{n}c_{n}C\Vert f_{n}\Vert_{(L^{\infty},\ell^{1})}=C\Vert
f\Vert_{(L^{\infty},\ell^{1})}.
\]

Fix $y$ and $n$, and call a non-negative integer $k$ \emph{exceptional} if
$k=1$ or if there is some number $x$ in the interval $I_{k}$ such that $|x-y|$
or $x+y$ lies in $I_{n}$. Denote the set of exceptional indices by $E$, and
let $G$ be the set of \emph{generic} indices forming the complement of $E$ in
$\mathbb{Z}_{+}$.

If $k$ is generic, then the intersection of the interval $[|x-y|,x+y]$ with
$I_{n}$ is either empty for all $x$ in $I_{k}$, or this intersection is all of
$I_{n}$ for all such $x$. Then $\tau_{y}f_{n}$ either vanishes on the whole
interval $I_{k}$ or it coincides on $I_{k}$ with
\begin{equation}
\frac{1}{2xy}\int_{n-1}^{n}t\,dt. \label{LocalTranslation}%
\end{equation}
Since $k\geq2$, the expression above does not change by more than a factor of
$2$ as $x$ runs through the interval $I_{k}$.

So for each generic index $k$ there is a non-negative constant $d_{k}$ with
$d_{k}\leq\tau_{y}f_{n}(x)\leq2d_{k}$ for all $x$ in $I_{k}$. Then
\[
\omega_{k}\Vert P_{k}(\tau_{y}f_{n})\Vert_{\infty}\leq\omega_{k}2d_{k}%
\leq2\Vert P_{k}(\tau_{y}f_{n})\Vert_{1}.
\]
Note too that $\omega_{n}\Vert f_{n}\Vert_{\infty}=\Vert f_{n}\Vert_{1}$ since
$f_{n}$ is constant $\left(  =1\right)  $\ on its support $I_{n}$. Therefore,
\[
\sum_{k\in G}\omega_{k}\Vert P_{k}(\tau_{y}f_{n})\Vert_{\infty}\leq\sum_{k\in
G}2\Vert P_{k}(\tau_{y}f_{n})\Vert_{1}\leq\sum_{k\in\mathbb{Z}_{+}}2\Vert
P_{k}(\tau_{y}f_{n})\Vert_{1}%
\]%
\[
=2\Vert\tau_{y}(f_{n})\Vert_{1}\leq2\Vert f_{n}\Vert_{1}=2\omega_{n}\Vert
f_{n}\Vert_{\infty}=2\Vert f_{n}\Vert_{(L^{\infty},\ell^{1})},
\]
the last inequality holding since translation is bounded, with norm $1$, on
$L^{1}(\mathbb{R}_{+},\ast_{\frac{1}{2}})$.

One way for $k$ to be exceptional is to have $x+y$ belong to $I_{n}$ for some
$x$ in $I_{k}$, that is, the sets $y+I_{k}$ and $I_{n}$ have non-empty
intersection; equivalently, the set $I_{n}-y$ overlaps $I_{k}$. There are at
most two such values of $k$, and none when $y>n$. Any other exceptional
indices $k$ come from cases where $I_{n}+y$ or $y-I_{n}$ overlap $I_{k}$, or
$k=1$. It follows easily that there are at most seven exceptional indices, and
in fact there are at most five of them.

It remains to estimate $\omega_{k}\Vert P_{k}(\tau_{y}f_{n})\Vert_{\infty}$
for each exceptional index $k$. When $k\leq3n$ use the estimate
\[
\tau_{y}f_{n}\left(  x\right)  \leq\frac{1}{2xy}\int_{\left[  \left\vert
x-y\right\vert ,x+y\right]  }t\,dt=\frac{1}{4xy}\{(x+y)^{2}-|x-y|^{2}\}=1
\]
to see that
\[
\omega_{k}\Vert P_{k}(\tau_{y}f_{n})\Vert_{\infty}\leq\omega_{k}\leq
\omega_{3n}\leq19\omega_{n}=19\Vert f_{n}\Vert_{(L^{\infty},\ell^{1})}.
\]

When $k$ is exceptional and $k>3n$, one of the sets $y\pm I_{n}$ must overlap
$I_{k}$. The smallest value that $y$ could take would then satisfy $y+n=k-1$,
making $y+\frac{1}{3}k>k-1$ and $y>\frac{2}{3}k-1>\frac{1}{3}k$ since $k>3$.
In particular, $y>\frac{1}{3}x$ for all $x$ in $I_{k}$ in these cases. For
this $k$ and such $x$ use the upper bound
\[
\tau_{y}f_{n}\left(  x\right)  \leq\frac{1}{2xy}\int_{n-1}^{n}t\,dt=\frac
{1}{4xy}\{n^{2}-(n-1)^{2}\}<\frac{2n}{x^{2}}\leq\frac{2n}{\left(  k-1\right)
^{2}}%
\]
where the first inequality follows from (\ref{LocalTranslation}),\ to see
that
\[
\omega_{k}\Vert P_{k}(\tau_{y}f_{n})\Vert_{\infty}\leq\frac{k^{2}%
(2n)}{(k-1)^{2}}\leq8n\leq24\omega_{n}\leq24\Vert f_{n}\Vert_{(L^{\infty}%
,\ell^{1})}.
\]

\subsection{Translation and convolution on $(L^{p},\ell^{q})(\mathbb{R}%
_{+},\ast_{\frac{1}{2}})\label{Convolution}$}

In this section we deduce that translation is uniformly bounded on
$(L^{p},\ell^{q})(\mathbb{R}_{+},\ast_{\frac{1}{2}})$ and note that Young's
inequality for convolution also holds for the amalgam spaces on $(\mathbb{R}%
_{+},\ast_{\frac{1}{2}})$. The uniform boundedness of translation on
$(L^{\infty},\ell^{1})(\mathbb{R}_{+},\ast_{\frac{1}{2}})$ implies by duality
that it also holds on $(L^{1},\ell^{\infty})(\mathbb{R}_{+},\ast_{\frac{1}{2}%
})$. To confirm this, first note that matters reduce to the case of a
non-negative function, $g$ say, in $(L^{1},\ell^{\infty})(\mathbb{R}_{+}%
,\ast_{\frac{1}{2}})$, and that $\tau_{y}g$ is then also non-negative. This
translate belongs to $(L^{1},\ell^{\infty})(\mathbb{R}_{+},\ast_{\frac{1}{2}%
})$ if and only if
\[
\int_{\mathbb{R}_{+}}\left(  \tau_{y}g(x)\right)  f(x)\,d\omega(x)<\infty
\]
for all non-negative functions $f$ in the unit ball of $(L^{\infty},\ell
^{1})(\mathbb{R}_{+},\ast_{\frac{1}{2}})$. In this case, the norm of $\tau
_{y}g$ in $(L^{1},\ell^{\infty})(\mathbb{R}_{+},\ast_{\frac{1}{2}})$ is equal
to the supremum of these integrals over all such functions $f$. By
\cite{Bloom/Heyer}, Theorem 1.3.21, and the fact that $y^{-}=y$, these
integrals are equal to%
\[
\int_{\mathbb{R}_{+}}g(z)\left(  \tau_{y}f(z)\right)  \,d\omega(z)\leq\Vert
g\Vert_{(L^{1},\ell^{\infty})}\Vert\tau_{y}f\Vert|_{(L^{\infty},\ell^{1})}\leq
C\Vert g\Vert_{(L^{1},\ell^{\infty})}.
\]
We thus have uniform boundedness of translation on $(L^{p},\ell^{q}%
)(\mathbb{R}_{+},\ast_{\frac{1}{2}})$ when the reciprocal indices $(1/p,1/q)$
sit at any of the four corners of the unit square in the first quadrant. As in
\cite{Holland}, complex interpolation then yields uniform boundedness of
translation whenever $(1/p,1/q)$ lies in this unit square, that is whenever
$1\leq p,q\leq\infty$. This also follows in a more elementary way from
H\"{o}lder's inequality.

As in the case of locally compact abelian groups, Young's inequality for
convolution of $L^{p}$-functions extends to these amalgams. The general
statement is that if reciprocal indices in the unit square satisfy the
condition%
\[
\left(  \frac{1}{p},\frac{1}{q}\right)  =\left(  \frac{1}{p_{1}},\frac
{1}{q_{1}}\right)  +\left(  \frac{1}{p_{2}},\frac{1}{q_{2}}\right)  -(1,1)
\]
and if functions $f_{1}$ and $f_{2}$ belong to the respective amalgams
$(L^{p_{1}},\ell^{q_{1}})(\mathbb{R}_{+},\ast_{\frac{1}{2}})$ and $(L^{p_{2}%
},\ell^{q_{2}})(\mathbb{R}_{+},\ast_{\frac{1}{2}})$, then the convolution of
$f_{1}$ and $f_{2}$ is defined and belongs to $(L^{p},\ell^{q})(\mathbb{R}%
_{+},\ast_{\frac{1}{2}})$. Moreover, we have%
\[
\Vert f_{1}\ast_{\frac{1}{2}}f_{2}\Vert_{(L^{p},\ell^{q})}\leq C\Vert
f_{1}\Vert_{(L^{p_{1}},\ell^{q_{1}})}\Vert f_{2}\Vert_{(L^{p_{2}},\ell^{q_{2}%
})}.
\]
In fact, the inclusions between amalgams then imply that these statements
still hold, usually with a different constant $C$, provided that
$1/p\leq1/p_{1}+1/p_{2}-1$ and $1/q\geq1/q_{1}+1/q_{2}-1$. Another way to
state this is that $(1/p,1/q)$ can be any point in the unit square lying
northwest of the point $(1/p_{1}+1/p_{2}-1,1/q_{1}+1/q_{2}-1)$, which is also
required to lie in the unit square. Again the general case follows from a few
extreme cases by complex interpolation or by repeated use of H\"{o}lder's inequality.

\subsection{ \textbf{Fourier transforms on }$(L^{p},\ell^{q})(\mathbb{R}%
_{+},\ast_{\frac{1}{2}})\label{FourierTransform}$}

Our goal in this section is to prove that if $f\in(L^{p},\ell^{q}%
)(\mathbb{R}_{+},\ast_{\frac{1}{2}})$ with $1\leq p,q\leq2$, then
$\overset{\wedge}{f}\in(L^{p^{\prime}},\ell^{q^{\prime}})(\mathbb{R}_{+}%
,\ast_{\frac{1}{2}})$. The cases where $p=q$ are already known (see
\cite{Degenfeld-Schonberg}) with the same proof as for locally compact abelian
groups, but if $p\neq q$, then this property of the Fourier transform requires
some work. These cases will follow by complex interpolation from those where
$p=q$ and the special ones where $(p,q)=(2,1)$ or $(1,2)$. (The latter is the
one that arises in the proof of Theorem \ref{transforms}.) We show below that
the two special cases are equivalent by duality, and we prove the first case
using some easily-checked properties of transforms of the indicator functions
$\mathbf{1}_{I_{n}}$.

From (\ref{IndicatorFourierTransform}) we find that the Fourier transform of
$\mathbf{1}_{I_{1}}$ belongs to $(L^{\infty},\ell^{q})$ $(\mathbb{R}_{+}%
,\ast_{\frac{1}{2}})$ for all $q>\frac{3}{2}$, but does \emph{not} belong to
$(L^{p},\ell^{1})(\mathbb{R}_{+},\ast_{\frac{1}{2}})$ for any value of $p$.
Let $g_{1}=3\,\mathbf{1}_{I_{1}}\ast_{\frac{1}{2}}\mathbf{1}_{[0,2)}$ and
$g_{n}=3\,\mathbf{1}_{I_{1}}\ast_{\frac{1}{2}}\mathbf{1}_{[n-2,n+1)}$ when
$n>1$. We can check that $g_{n}(x)=1$ for all $x$ in $I_{n}$. When $n>1$,
H\"{o}lder's inequality gives
\begin{align*}
\Vert\overset{\wedge}{g_{n}}\Vert_{(L^{2},\ell^{1})}  &  =3\left\Vert
\widehat{\mathbf{1}_{I_{1}}}\left(  \widehat{\mathbf{1}_{I_{n-1}}%
}+\widehat{\mathbf{1}_{I_{n}}}+\widehat{\mathbf{1}_{I_{n+1}}}\right)
\right\Vert _{(L^{2},\ell^{1})}\\
&  \leq3\left\Vert \widehat{\mathbf{1}_{I_{1}}}\right\Vert _{(L^{\infty}%
,\ell^{2})}\left\Vert \widehat{\mathbf{1}_{I_{n-1}}}+\widehat{\mathbf{1}%
_{I_{n}}}+\widehat{\mathbf{1}_{I_{n+1}}}\right\Vert _{(L^{2},\ell^{2})}\\
&  =C\left\Vert \widehat{\mathbf{1}_{I_{n-1}}}+\widehat{\mathbf{1}_{I_{n}}%
}+\widehat{\mathbf{1}_{I_{n+1}}}\right\Vert _{2}\\
&  =C\left\Vert \mathbf{1}_{I_{n-1}}+\mathbf{1}_{I_{n}}+\mathbf{1}_{I_{n+1}%
}\right\Vert _{2}\\
&  =C(\omega_{n-1}+\omega_{n}+\omega_{n+1})^{1/2}\\
&  \leq C^{\prime}\sqrt{\omega_{n}}.
\end{align*}

By formula \eqref{DiscreteNorm}, if $f\in L^{2}(\mathbb{R}_{+},\ast_{\frac
{1}{2}})$ and
$f$ vanishes outside $I_{n}$, then $\Vert f\Vert_{(L^{2},\ell^{1})}%
=\sqrt{\omega_{n}}\Vert f\Vert_{2}.$ Moreover, in this case $f=fg_{n}$ and it
follows by Young's inequality for convolution of amalgams that
\[
\left\Vert \overset{\wedge}{f}\right\Vert _{(L^{\infty},\ell^{2})}=\left\Vert
\widehat{fg_{n}}\right\Vert _{(L^{\infty},\ell^{2})}=\left\Vert
\overset{\wedge}{f}\ast_{\frac{1}{2}}\overset{\wedge}{g_{n}}\right\Vert
_{(L^{\infty},\ell^{2})}\leq\left\Vert \overset{\wedge}{f}\right\Vert
_{(L^{2},\ell^{2})}\left\Vert \overset{\wedge}{g_{n}}\right\Vert _{(L^{2}%
,\ell^{1})}%
\]%
\[
\leq\left\Vert \overset{\wedge}{f}\right\Vert _{2}C\sqrt{\omega_{n}}%
=C\sqrt{\omega_{n}}\Vert f\Vert_{2}=C\Vert f\Vert_{(L^{2},\ell^{1})}.
\]
For a general function $f$ in $(L^{2},\ell^{1})(\mathbb{R}_{+},)$, applying
the inequalities above to
$P_{n}f
:=f\mathbf{1}_{I_{n}}$ yields that $\left\Vert \widehat{P_{n}f}\right\Vert
_{(L^{\infty},\ell^{2})}\leq C\sqrt{\omega_{n}}\left\Vert P_{n}f\right\Vert
_{2}$. Since for $(p,q)=(2,1)$, formula \eqref{DiscreteNorm} takes the special
form
$\left\Vert f\right\Vert _{(L^{2},\ell^{1})}=\sum_{n=1}^{\infty}\sqrt
{\omega_{\alpha}\left(  I_{n}\right)  }\left\Vert P_{n}f\right\Vert _{2},$
it follows that $\left\Vert \hat{f}\right\Vert _{(L^{\infty},\ell^{2})}\leq
C\left\Vert f\right\Vert _{(L^{2},\ell^{1})}$.

Suppose next that $g\in L^{1}(\mathbb{R}_{+},\ast_{\frac{1}{2}})$. Then the
function $\overset{\wedge}{g}$ belongs to $(L^{2},\ell^{\infty})(\mathbb{R}%
_{+},\ast_{\frac{1}{2}})$ if and only if $\overset{\wedge}{g}f\in
L^{1}(\mathbb{R}_{+},\ast_{\frac{1}{2}})$ for all functions $f$ in the unit
ball of $(L^{2},\ell^{1})(\mathbb{R}_{+},\ast_{\frac{1}{2}})$. In this case,
$\Vert\overset{\wedge}{g}\Vert_{(L^{2},\ell^{1})}$ is equal to the supremum
over all such functions $f$ of the numbers $|\int\overset{\wedge
}{g}(t)f(t)\omega(t)\,dt|$. But each of these integrals is equal to $\int
g(x)\overset{\wedge}{f}(x)\omega(x)\,dx$ and so has absolute value less than
or equal to%
\[
\Vert g\Vert_{(L^{1},\ell^{2})}\left\Vert \overset{\wedge}{f}\right\Vert
_{(L^{\infty},\ell^{2})}\leq\Vert g\Vert_{(L^{1},\ell^{2})}C\Vert
f\Vert_{(L^{2},\ell^{1})}=C\Vert g\Vert_{(L^{1},\ell^{2})}.
\]
In other words, the Fourier transform is a bounded operator from
$L^{1}(\mathbb{R}_{+},\ast_{\frac{1}{2}})$ to $(L^{2},\ell^{\infty
})(\mathbb{R}_{+},\ast_{\frac{1}{2}})$ when $L^{1}(\mathbb{R}_{+},\ast
_{\frac{1}{2}})$ is viewed as a dense subspace of $(L^{1},\ell^{2}%
)(\mathbb{R}_{+},\ast_{\frac{1}{2}})$ with the norm $\Vert\cdot\Vert
_{(L^{1},\ell^{2})}$. Extend this operator to all of $(L^{1},\ell
^{2})(\mathbb{R}_{+},\ast_{\frac{1}{2}})$.

This includes the usual extension of the Fourier transform operator from
$L^{1}(\mathbb{R}_{+},\ast_{\frac{1}{2}})\cap L^{2}(\mathbb{R}_{+},\ast
_{\frac{1}{2}})$ to an isometry from the space $L^{2}(\mathbb{R}_{+}%
,\ast_{\frac{1}{2}})$ to a dual copy of $L^{2}(\mathbb{R}_{+},\ast_{\frac
{1}{2}})$. It also includes the transform originally defined as a mapping of
$L^{1}(\mathbb{R}_{+},\ast_{\frac{1}{2}})$ to $L^{\infty}(\mathbb{R}_{+}%
,\ast_{\frac{1}{2}})$ and shown above to map the smaller space $(L^{2}%
,\ell^{1})(\mathbb{R}_{+},\ast_{\frac{1}{2}})$ to $(L^{\infty},\ell
^{2})(\mathbb{R}_{+},\ast_{\frac{1}{2}})$. So, the Hausdorff-Young theorem
holds for amalgams in the four extreme cases where the indices $(p,q)$ are
$(1,1)$, $(2,2)$, $(2,1)$ and $(1,2)$, and the other cases then follow by
complex interpolation.

\section{Some countable non-discrete hypergroups\label{Countable}}

The positive conclusion in Wiener's theorem also holds for non-even exponents
in the interval $\left[  1,\infty\right)  $ on some countable compact
hypergroups $H_{a}$ considered in \cite{DunklRamirez} and \cite{Spector}, and
on the countable locally compact hypergroup $H$ below. Here $a$ is a parameter
in the interval $(0,1/2]$. We let $a=1/2$ and leave the other cases for the reader.

\subsection{Compact countable commutative hypergroups}

\begin{example}
\label{DR} The one-point compactification $\mathbb{Z}_{+}\cup\left\{
\infty\right\}  $ of the non-negative integers is a compact commutative
hypergroup $\left(  H_{\frac{1}{2}},\ast\right)  $ with convolution given by%
\begin{equation}
\varepsilon_{m}\ast\varepsilon_{n}=\left\{
\begin{array}
[c]{cc}%
\sum_{k=1}^{\infty}\frac{1}{2^{k}}\varepsilon_{k+n}, & m=n\in\mathbb{Z}_{+},\\
& \\
\varepsilon_{\infty}, & m=n=\infty,\\
& \\
\varepsilon_{\min\left\{  m,n\right\}  }, & m\neq n\in\mathbb{Z}%
_{+}\mathbb{\cup\{ \infty\}},
\end{array}
\right.
\end{equation}
so that $\varepsilon_{\infty}$\ is the identity element. The Haar measure
$\omega$\ is given by $\omega\left(  n\right)  =\frac{1}{2^{n+1}}$ for
$n<\infty$ and $\omega\left(  \infty\right)  =0.$ The characters $\chi_{n}$
are given by%
\[
\chi_{n}\left(  m\right)  =\left\{
\begin{array}
[c]{cc}%
0, & m\leq n-2,\\
-1, & m=n-1,\\
1, & m\geq n
\end{array}
\right.
\]
where $n\in\mathbb{Z}_{+}$, and the Plancherel measure $\pi$ is just%
\begin{equation}
\pi(\chi_{n})=\frac{1}{\Vert\chi_{n}\Vert_{2}^{2}}=%
\begin{cases}
2^{n-1}, & \text{if }n\text{$\geq1$,}\\
1, & \text{if }n\text{$=0$.}%
\end{cases}
\label{Plancherel}%
\end{equation}

We observe that the set\ of continuous positive definite functions is given by%
\begin{equation}
P\left(  H_{\frac{1}{2}}\right)  =\left\{  f:f=\sum_{i=0}^{\infty}\alpha
_{i}\chi_{i}:\alpha_{i}\geq0,\sum_{i=0}^{\infty}\alpha_{i}<\infty\right\}
\label{Series expansion}%
\end{equation}
(indeed, in $\cite{DunklRamirez}$, equation (\ref{Series expansion}) is the
definition of $P\left(  H_{1/2}\right)  $). It is a consequence of Bochner's
theorem ($\cite{Bloom/Heyer}$, Theorems 4.1.15 and 4.1.16) that
(\ref{Series expansion}) holds if and only if $f\in P_{b}\left(  H_{\frac
{1}{2}}\right)  $, and this space coincides with $P\left(  H_{\frac{1}{2}%
}\right)  $ because $H_{\frac{1}{2}}$\ is compact.

If $f$ is as in (\ref{Series expansion})\ then%
\begin{equation}
f\left(  n\right)  =\left(  \sum_{i=0}^{n}\alpha_{i}\right)  -\alpha_{n+1}
\label{PositiveDefinite}%
\end{equation}
for $n\in\mathbb{Z}_{+}$ and (because of continuity)%
\begin{equation}
f\left(  \infty\right)  =\sum_{i=0}^{\infty}\alpha_{i}. \label{Infinity}%
\end{equation}

\begin{remark}
For $f\in P\left(  H_{\frac{1}{2}}\right)  $ we have $\left\Vert f\right\Vert
_{\infty}=f\left(  \infty\right)  $, as seen from (\ref{PositiveDefinite}) and
(\ref{Infinity}) (or from $\cite{Bloom/Heyer}$, Lemma 4.1.3(g)).
\end{remark}
\end{example}

\subsection{ Operations on $P\left(  H_{\frac{1}{2}}\right)  $}

By (\ref{Series expansion}) the function $f$ is the inverse Fourier transform
of%
\[
i\longmapsto\alpha_{i}/\pi\left(  \chi_{i}\right)
\]
and the latter function (on $
\widehat{H_{\frac{1}{2}}}
$) belongs to $L^{1}\left(  \pi\right)  $. The set of inverse transforms of
functions in $L^{1}\left(  \pi\right)  $ is called the Fourier algebra of
$H_{\frac{1}{2}}$, and is denoted by $A\left(  \left.  H_{\frac{1}{2}}\!
\right.  \right)  $.
It is shown in \cite{DunklRamirez}
that Lipschitz functions operate
on $A\left(  \left.  H_{\frac{1}{2}}\! \right.  \right)  $;
in particular, if $f\in A\left(  \left.  H_{\frac{1}{2}}\! \right.  \right)  $
and $1\leq p<\infty$, then $|f|^{p}\in A\left(  \left.  H_{\frac{1}{2}}\!
\right.  \right)  $
as well. We
prove the corresponding statement for $P\left(  H_{\frac{1}{2}}\right)  $ and
apply it in Section \ref{WienerCountableCompact}.

\begin{proposition}
\label{operate} Let $1 \le p < \infty$. Suppose that $f: H_{\frac{1}{2}}
\rightarrow\mathbb{C}$ is $p$-integrable in a neighbourhood $U$ of the
identity $e$. If $f$ is of positive type
then so is $\left\vert f\right\vert ^{p} $. In particular, if $f\in P\left(
H_{\frac{1}{2}}\right)  \ $then $\left\vert f\right\vert ^{p} \in P\left(
H_{\frac{1}{2}}\right)  $.

\end{proposition}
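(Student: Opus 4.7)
My plan is to reduce the general statement to the case $f \in P(H_{\frac{1}{2}})$ and then verify directly that $|f|^{p}$ is of positive type by showing $\widehat{|f|^{p}}(\chi_{k}) \geq 0$ for every character, via an Abel-sum identity combined with Jensen's inequality. Since $H_{\frac{1}{2}}$ is compact and every point of $\mathbb{Z}_{+}$ carries finite mass, $p$-integrability on a neighbourhood of $e = \infty$ is equivalent to $p$-integrability on the whole hypergroup, so $f \in L^{p} \subset L^{1}$. Smoothing against the approximate identity $k_{n} := 2^{n+1} \mathbf{1}_{\{n\}} \in C_{c}(H_{\frac{1}{2}})$ (the involution on $H_{\frac{1}{2}}$ is trivial, so $k_{n}^{-} = k_{n}$) produces $f_{n} := k_{n} \ast f \ast k_{n}$, which is continuous, bounded, and still of positive type; thus $f_{n} \in P_{b}(H_{\frac{1}{2}}) = P(H_{\frac{1}{2}})$, and $f_{n} \to f$ in $L^{p}$.

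For the main step, assume $f \in P(H_{\frac{1}{2}})$ and write $f = \sum_{i} \alpha_{i} \chi_{i}$ with $\alpha_{i} \geq 0$, setting $A(n) := \sum_{i=0}^{n} \alpha_{i}$. Using the values of $\chi_{k}$ recorded in Example \ref{DR}, one computes
\[
\widehat{|f|^{p}}(\chi_{k}) = -2^{-k}|f(k-1)|^{p} + \sum_{n \geq k} 2^{-n-1} |f(n)|^{p} \quad (k \geq 1),
\]
while $\widehat{|f|^{p}}(\chi_{0}) \geq 0$ is trivial. From (\ref{PositiveDefinite}), $f(n) = A(n) - \alpha_{n+1}$, so $|f(k-1)| \leq A(k-1) + \alpha_{k} = A(k)$. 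The recursion $A(k+1) = 2A(k) - f(k)$ iterates to the Abel identity
\[
A(k) = \sum_{m \geq 0} 2^{-m-1} f(k+m),
\]
because $A(k+N)/2^{N} \to 0$ as $N \to \infty$ (since $A(\infty) = f(\infty) < \infty$). As the weights $(2^{-m-1})_{m \geq 0}$ form a probability distribution and $p \geq 1$, Jensen's inequality for $x \mapsto x^{p}$ yields
\[
\sum_{m \geq 0} 2^{-m-1} |f(k+m)|^{p} \; \geq \; \Bigl(\sum_{m \geq 0} 2^{-m-1} |f(k+m)|\Bigr)^{p} \; \geq \; A(k)^{p} \; \geq \; |f(k-1)|^{p},
\]
which after multiplication by $2^{-k}$ is exactly $\widehat{|f|^{p}}(\chi_{k}) \geq 0$.

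To close the general case, I would use the elementary inequality $\bigl||a|^{p} - |b|^{p}\bigr| \leq p(|a|^{p-1} + |b|^{p-1})|a - b|$ together with H\"older to obtain $\||f_{n}|^{p} - |f|^{p}\|_{1} \to 0$; hence $\widehat{|f_{n}|^{p}}(\chi_{k}) \to \widehat{|f|^{p}}(\chi_{k})$ for each $k$ and non-negativity is preserved in the limit, so Remark \ref{Positive type} shows that $|f|^{p}$ is of positive type. The main obstacle is the pair of identities $|f(k-1)| \leq A(k)$ and $A(k) = \sum_{m \geq 0} 2^{-m-1} f(k+m)$; these encode the Abel-summability behaviour of $f$ at $e = \infty$ and constitute the real content of the argument. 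Once they are in hand, Jensen closes the computation instantly and the approximation step is routine.
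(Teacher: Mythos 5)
Your argument is correct, but it takes a genuinely different route from ours. We work directly with an arbitrary integrable $f$ of positive type: nonnegativity of $\hat f$ is shown to be equivalent to the tail conditions \eqref{Tails} on $r(n)=f(n)\omega(n)$, these conditions are stable under replacing $r$ by $|r|$ (the case $p=1$), and the case $p>1$ is reduced to $p=1$ by H\"older with respect to the normalized tail measure $\omega_n$. You instead prove the key estimate only for $f\in P(H_{1/2})$, via the Bochner expansion \eqref{Series expansion}: the bound $|f(k-1)|\le A(k)$ from \eqref{PositiveDefinite} together with the Abel identity $A(k)=\sum_{m\ge0}2^{-m-1}f(k+m)$, followed by Jensen. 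Note that after multiplying by $\omega(k-1)=2^{-k}$ your inequality $|f(k-1)|\le A(k)$ is exactly our condition \eqref{Tails}, so the combinatorial core is the same; the real difference is that you derive it from the coefficients $\alpha_i$ rather than directly from $\hat f\ge0$, and consequently you need the extra smoothing step $f_n=k_n\ast f\ast k_n$ to pass from $P(H_{1/2})$ to general functions of positive type. That step is genuinely necessary in your setup (integrable functions of positive type on $H_{1/2}$ can be unbounded, e.g.\ $f(n)=2^{n/2}$, and hence need not lie in $P(H_{1/2})$), and the assertions you leave implicit do hold here: one computes that $f_n=f$ on $\{0,\dots,n\}$ and $f_n$ is constant on $U_{n+1}$, so $f_n$ is indeed continuous, bounded and of positive type, hence in $P(H_{1/2})$, and $f_n\to f$ in $L^p$, after which your limit argument via Remark \ref{Positive type} closes the proof. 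What our approach buys is brevity and the absence of any approximation step; what yours buys is a structurally transparent use of Bochner's theorem, with the Abel identity explaining where the tail condition comes from.
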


\begin{proof}
The
$p$-integrability of $f$ near $e$ implies global $p$-integrability, because
the complement of $U$
is finite. Since the Plancherel measure has full support, Remark
\ref{Positive type} then reduces matters to checking that the Fourier
coefficients of $\left\vert f\right\vert ^{p} $\
are non-negative if those of $f$ are.

When $p=1$, let $r(n)=f\left(  n\right)  \omega\left(  n\right)  $ for each
$n$; then $r\in\ell^{1}$ since $f$ is integrable. We claim that
$\overset{\wedge}{f}\geq0$ if and only if $r$ is real-valued and%
\begin{equation}
|r(n)|\leq r(n+1)+r(n+2)+\cdots\quad\text{for all $n$} \label{Tails}%
\end{equation}
If these inequalities hold for $f$, then they also hold when all negative
values $r(m)$ are replaced by $|r(m)|$, that is when $f$ is replaced by $|f|$.
So the case of the proposition where $p=1$ follows from our claim.

The conditions above on $r$ are equivalent to requiring for all $n$ that
\begin{equation}
r(n)+r(n+1)+r(n+2)+\cdots\geq0 \label{plus}%
\end{equation}
and
\begin{equation}
-r(n)+r(n+1)+r(n+2)+\cdots\geq0. \label{minus}%
\end{equation}
Indeed, subtracting the two inequalities for the same value of $n$ shows that
$r(n)$ is real, and then inequality (\ref{Tails}) follows since $|r(n)|=\max
\{r(n),-r(n)\}$. The converse is obvious.

Condition (\ref{minus}) is equivalent to requiring that $\overset{\wedge
}{f}(n+1)\geq0$, while the $0^{th}$ case of condition (\ref{plus}) is
equivalent to requiring that $\overset{\wedge}{f}(0)\geq0$. If condition
(\ref{minus}) holds for all $n$, and condition (\ref{plus}) holds for some
value of $n$, then adding the corresponding case of condition (\ref{minus})
shows that condition (\ref{plus}) also holds for the next value of $n$. So the
two conditions hold of all values of $n$ if and only if $f$ is of positive type.

To deal with exponents $p$ in the interval $(1, \infty)$,
consider
the $n$-th instance of condition \eqref{Tails} with $f$ replaced by $|f|^{p}$,
that is
\[
|f(n)|^{p}\omega(n) \leq|f(n+1)|^{p}\omega(n+1) + |f(n+2)|^{p}\omega(n+2)
+\cdots.
\]
Let~$\omega_{n}(n+k) = \omega(n+k)/\omega(n)$ when $k = 1, 2 \cdots$. The
inequality above is equivalent to requiring that
\begin{equation}
\label{ReTails}|f(n)| \le\left[  \sum_{k=1}^{\infty}|f(n+k)|^{p}\omega
_{n}(n+k)\right]  ^{1/p}.
\end{equation}
The expression on the right above is the $L^{p}$ norm of the restriction of
$f$ to the set $\{ n+1, n+2, \cdots\}$ with respect to the measure $\omega
_{n}$, which has total mass $1$. By H\"older's inequality, that $L^{p}$ norm
majorizes the corresponding $L^{1}$ norm. So it is enough the prove inequality
\eqref{ReTails}
when $p=1$, and that was done in the first part of the proof.
\end{proof}

\subsection{A locally compact example}

We now analyse a non-compact example presented in \cite{Spector}.
%
\label{Non-compact}
For $N>0$ the set $U_{N}$ defined by
\begin{equation}
U_{N}:=\{N,N+1,N+2,\cdots,\infty\} \label{Compact neighbourhoods}%
\end{equation}
is a proper subhypergroup of $H_{1/2}$ and is isomorphic to $H_{1/2}$, but
with a scaled Haar measure. Define similar hypergroups $U_{N}$ when $N\leq0$
($U_{0}=H_{1/2}$), and let $H$ be the union of these nested compact
hypergroups. Then $H$ is a locally compact commutative hypergroup with
convolution given by%
\begin{equation}
\varepsilon_{m}\ast\varepsilon_{n}=\left\{
\begin{array}
[c]{cc}%
\sum_{k=1}^{\infty}\frac{1}{2^{k}}\varepsilon_{k+n}, & m=n\in\mathbb{Z},\\
& \\
\varepsilon_{\infty}, & m=n=\infty,\\
& \\
\varepsilon_{\min\left\{  m,n\right\}  }, & m\neq n\in\mathbb{Z\cup\{
\infty\}},
\end{array}
\right.  \label{NoncompactConvolution}%
\end{equation}
so that $\varepsilon_{\infty}$\ is the identity element, but $H$ is not compact.

The functions $\chi_{n}$ in Example \ref{DR}, with $n$ now allowed to be any
integer, comprise all the characters on $H$ except for the character
$\chi_{-\infty}\equiv1$, which has Plancherel measure $0$. The first case of
formula (\ref{Plancherel}) for the Plancherel measure of $\chi_{n}$ extends to
all indices $n\leq0$ (in particular we now have $\pi\left(  \chi_{0}\right)
=\frac{1}{2}$).

Note that $H$ is Pontryagin since (up to the different parametrization of
$H^{\wedge}$) it is self-dual via the mapping $n\rightarrow\chi_{-n}$. In fact
it is straightforward to see that%
\[
\chi_{m}\chi_{n}=\left\{
\begin{tabular}
[c]{ll}%
$\sum_{k=1}^{\infty}\tfrac{1}{2^{k}}\chi_{n-k},$ & $m=n\in\mathbb{Z},$\\
& \\
$\chi_{-\infty},$ & $m=n=-\infty,$\\
& \\
$\chi_{\max\left\{  m,n\right\}  },$ & $m\neq n\in\mathbb{Z\cup\{-\infty\}.}$%
\end{tabular}
\right.
\]


\begin{remark}
\label{PositiveDefiniteproductextended}By $\cite{Bloom/Heyer}$, Corollary
2.4.20(ii), $H_{\frac{1}{2}}$ is also Pontryagin. In particular, $H$ and
$H_{\frac{1}{2}}$ are strong hypergroups (that is, their canonical duals are
also hypergroups). Now use Remark \ref{PositiveDefiniteproduct} to obtain%
\[
P_{b}(H_{1/2})\cdot P_{b}(H_{1/2})\subset P_{b}(H_{1/2})\text{\hspace
{0.5cm}and\hspace{0.5cm}}P_{b}(H)\cdot P_{b}(H)\subset P_{b}(H),
\]
so that all the results of Section \ref{Wiener}\ apply to both $H_{1/2}$ and
$H$.
In particular the conclusion of Wiener's theorem holds on
$H$,
and again on $H_{1/2}$, for all even $p\geq1$. In Section
\ref{WienerCountableCompact} we will show that the same conclusion
holds on both $H_{1/2}$\ and$\ H$ for all $p\in\lbrack1,\infty].$
\end{remark}

\subsection{Localizing properties of functions}

Functions on $H$ are positive definite if and only if their restrictions to
each subhypergroup $U_{N}$ are positive definite. The same is true for
continuity of functions on $H$. If $g\in C_{c}(K)$ then the convolution
$g^{\ast}\ast g$ vanishes outside $U_{N}$ for some integer $N$. It follows
that a (locally integrable) function is of positive type on $H$ if and only if
the restriction of that function to each $U_{N}$ is of positive type.
Lemma \ref{TrivialExtension} below provides
a converse to this.

It is again clear that every $\ell^{1}$ sum of characters (including
$\chi_{-\infty}$) with non-negative coefficients is continuous, bounded and
positive definite. Conversely, given a function $f$ in $P(H)$, denote its
restriction to the subhypergroup $U_{N}$ by $f|_{U_{N}}$.
Then $f|_{U_{N}}$\ is bounded as $U_{N}$\ is compact, and by
\cite{Bloom/Heyer}, Lemma 4.1.3g,%
\[
\left\Vert f|_{U_{N}}\right\Vert _{\infty}=f|_{U_{N}}\left(  \infty\right)
=f\left(  \infty\right)
\]
for all $N\in\mathbb{Z}.$ It follows that $f$ is bounded on $H$, so then by
Bochner's theorem again there exist non-negative $\alpha_{-\infty}$ and
$\alpha_{j},$ $j\in\mathbb{Z}$ with $\sum_{j}\alpha_{j}<\infty$ such that
$f=\alpha_{-\infty}\chi_{-\infty}+\sum_{j}\alpha_{j}\chi_{j}$, and hence
$\Vert f\Vert_{\infty}=f(\infty)$ and $P(H)=P_{b}(H)$.



The following proposition is a corollary of Proposition \ref{operate}, using
localization and the lines after (\ref{Compact neighbourhoods}), and will
prove useful in Section \ref{WienerCountableCompact}.

\begin{proposition}
\label{Unbounded}
Let $1\leq p<\infty$. Suppose that $f:H$
$\rightarrow\mathbb{C}$ is $p$-integrable in a neighbourhood
of the identity.
If $f$ is of positive type
then so is $\left\vert f\right\vert ^{p}$. In particular, if $f\in P\left(
H\right)  \ $then $\left\vert f\right\vert ^{p}\in P\left(  H\right)  $.

\end{proposition}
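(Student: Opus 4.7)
The plan is to reduce Proposition \ref{Unbounded} to the compact case already handled in Proposition \ref{operate} by localizing to the nested compact subhypergroups $U_{N}$ and then reassembling. Recall that the neighbourhood filter of the identity $\infty$ in $H$ is generated by the sets $U_{N}=\{N,N+1,\dots,\infty\}$, each of which is a compact subhypergroup of $H$ isomorphic to $H_{1/2}$ (possibly with a rescaled Haar measure, which is harmless for everything that follows since positive type is preserved under positive scaling).

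First I would check that the hypotheses of Proposition \ref{operate} carry over to every $U_{M}$. The assumption that $f$ is $p$-integrable on some neighbourhood of $\infty$ gives an integer $N$ with $\int_{U_{N}}|f|^{p}\,d\omega<\infty$. For any other $M\in\mathbb{Z}$, the symmetric difference $U_{M}\triangle U_{N}$ is finite and consists of integer points $n$, each carrying Haar mass $\omega(n)=2^{-n-1}<\infty$. Local integrability of $f$ (built into being of positive type) forces $|f(n)|<\infty$ at each such point, so $\int_{U_{M}}|f|^{p}\,d\omega<\infty$ as well. Moreover, since $U_{M}$ is a subhypergroup of $H$, the restriction $f|_{U_{M}}$ of a function of positive type on $H$ is of positive type on $U_{M}$ (as already noted in the paragraph introducing Lemma \ref{TrivialExtension}).

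Next, for each $M\in\mathbb{Z}$, I apply Proposition \ref{operate} to $f|_{U_{M}}$ on $U_{M}\cong H_{1/2}$. This gives that $(|f|^{p})|_{U_{M}}=|f|_{U_{M}}|^{p}$ is of positive type on $U_{M}$. Since this holds for every $M$, the converse direction referenced just before the statement (Lemma \ref{TrivialExtension}) yields that $|f|^{p}$ is of positive type on all of $H$, which is the first assertion.

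For the "in particular" part with $f\in P(H)$, continuity and local boundedness of $f$ make the $p$-integrability hypothesis automatic on any compact neighbourhood of $\infty$, so the first part applies and $|f|^{p}$ is of positive type. Since $|f|^{p}$ is also continuous, the equivalence recorded after Definition \ref{positivetype} upgrades this to positive definite, and the boundedness observation $\|f\|_{\infty}=f(\infty)$ (combined with $P(H)=P_{b}(H)$ as shown earlier) gives $|f|^{p}\in P(H)$. The only real work in the plan is the bookkeeping around the extension step: the main subtlety is making sure that a function whose restriction to each $U_{M}$ is of positive type really is of positive type globally, which is precisely what Lemma \ref{TrivialExtension} supplies once one checks that any test function $g\in C_{c}(H)$ has $g^{\ast}\ast g$ supported in some $U_{M}$.
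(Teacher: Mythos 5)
Your proof is correct and follows essentially the same route the paper intends: the paper simply declares the proposition a corollary of Proposition \ref{operate}, obtained by localizing to the compact subhypergroups $U_{N}\cong H_{1/2}$ (with scaled Haar measure) and reassembling, which is exactly what you do. The only small quibble is that the reassembly step is most directly the ``if and only if'' localization statement in the paragraph preceding Lemma \ref{TrivialExtension} rather than that lemma itself, though your variant --- applying Lemma \ref{TrivialExtension} to $|f|^{p}|_{U_{M}}$ and using that $g^{\ast}\ast g$ is supported in some $U_{M}$ --- reaches the same conclusion.
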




\begin{lemma}
\label{TrivialExtension} Extend a function of positive type on the hypergroup
$U_{N}$ to all of $H$ by making it vanish
outside $U_{N}$. That
extension is of positive type on $H$. In particular, the extension by zero of
a function in $P(U_{N})$ is in $P(H)$.
\end{lemma}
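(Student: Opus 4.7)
The plan is to test the defining inequality of positive type for $\tilde{f}$ against an arbitrary $g \in C_c(H)$, exploiting the very restrictive form of the convolution on $H$. Let $f_0$ be of positive type on $U_N$ and $\tilde{f}$ its extension by zero. Since $H$ is unimodular with identity involution (read off from (\ref{NoncompactConvolution})), one has $g^{\ast} = \overline{g}$ and $(g \ast g^{\ast})(x) = \int_H g(x \ast y)\overline{g(y)}\,d\omega_H(y)$. As $\tilde{f}$ vanishes off $U_N$, only the values of $(g \ast g^{\ast})$ on $U_N$ matter.

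The key structural fact is that for $x \in U_N$ (so $x \ge N$ or $x = \infty$) and $y \in H \setminus U_N$ (so $y < N$) we have $x \ne y$ and $\min\{x,y\} = y$, whence $\varepsilon_x \ast \varepsilon_y = \varepsilon_y$ and $g(x \ast y) = g(y)$. Splitting the $y$-integration therefore gives, for every $x \in U_N$,
\[
(g \ast g^{\ast})(x) \;=\; I(x) + C_g, \quad I(x) := \int_{U_N} g(x \ast y)\overline{g(y)}\,d\omega_H(y), \quad C_g := \int_{H \setminus U_N} |g|^2\,d\omega_H \ge 0,
\]
with $C_g$ independent of $x$. Because $U_N$ is a subhypergroup, $I$ coincides with the convolution $h \ast_{U_N} h^{\ast}$ in $U_N$, where $h := g|_{U_N} \in C(U_N) = C_c(U_N)$, computed with respect to the restricted Haar measure $\omega_H|_{U_N}$. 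By uniqueness of Haar measure, $\omega_H|_{U_N}$ is a positive multiple of $\omega_{U_N}$, and positive type is preserved under such rescaling, so the hypothesis on $f_0$ yields $\int_{U_N} f_0\cdot I\,d\omega_H \ge 0$.

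It remains to dispose of the constant contribution $C_g \int_{U_N} f_0\,d\omega_H$. For this I would test the positive-type inequality for $f_0$ on $U_N$ against the single function $g_0 := \mathbf{1}_{U_N}$, which lies in $C_c(U_N)$ precisely because $U_N$ is compact. Since $U_N$ is a subhypergroup, $(g_0 \ast_{U_N} g_0^{\ast})(x)$ equals the positive constant $\omega_H(U_N)$ for every $x \in U_N$, and the positive-type inequality then forces $\int_{U_N} f_0\,d\omega_H \ge 0$. Adding the two contributions gives $\int_H \tilde{f}\cdot(g \ast g^{\ast})\,d\omega_H \ge 0$, and since $g$ was arbitrary, $\tilde{f}$ is of positive type on $H$. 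The main obstacle is precisely the stray constant $C_g$ coming from the complement of $U_N$; it is its treatment that forces the auxiliary nonnegativity of $\int_{U_N} f_0\,d\omega_H$, and this in turn is the only step where compactness of $U_N$ is used in an essential way.
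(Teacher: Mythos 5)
Your proof is correct, but it takes a genuinely different route from the paper's. You verify Definition \ref{positivetype} directly: from the explicit convolution \eqref{NoncompactConvolution} you note that $\varepsilon_x\ast\varepsilon_y=\varepsilon_y$ when $x\in U_N$ and $y\notin U_N$, so on $U_N$ the test function $g\ast g^{\ast}$ splits into a convolution internal to the subhypergroup $U_N$ plus the nonnegative constant $\int_{H\setminus U_N}|g|^2\,d\omega$; the first piece is disposed of by the positive-type hypothesis on $f_0$ (after observing that $\omega_H|_{U_N}$ is a Haar measure on the open compact subhypergroup $U_N$, so rescaling is harmless), and the constant piece by testing $f_0$ against $\mathbf{1}_{U_N}\in C_c(U_N)$, which yields $\int_{U_N}f_0\,d\omega\geq 0$. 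The paper instead works on the Fourier side: since $U_N$ is compact, $f_N\in L^1(U_N)$, the restriction of any character of $H$ to $U_N$ is a character of $U_N$, and every character of $U_N$ has positive Plancherel measure, so Remark \ref{Positive type} gives $\widehat{f_N}\geq 0$ and hence $\hat f\geq 0$ on $\operatorname{supp}\pi_H$, which by Remark \ref{Positive type} again is positive type. Your argument is more elementary and self-contained (no Bochner/Plancherel input), and it produces the pleasant byproduct $\int_{U_N}f_0\,d\omega\geq 0$; on the other hand it leans heavily on the special convolution structure of $H$ --- that points of $U_N$ act trivially on points outside $U_N$ --- so it would not transfer to a general open compact subhypergroup, whereas the paper's transform argument uses only compactness of $U_N$ and fullness of the Plancherel support, the same machinery already in play throughout Section \ref{Countable}.
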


\begin{proof}
Denote the original function by $f_{N}$ and its
extension by $f$. Since $f_{N}$ is locally integrable and $U_{N}$ is compact,
$f_{N} \in L^{1}(U_{N})$ and $f \in L^{1}(H)$.

To apply Remark \ref{Positive type}, let $\chi$ be a character on $H$. Then
its restriction $\chi|_{U_{N}}$ to $U_{N}$ is a character on $U_{N}$, and
$\hat{f}(\chi)=\widehat{f_{N}}(\chi|_{U_{N}})$. Since every character on
$U_{N}$ has positive Plancherel measure, $\widehat{f_{N}}(\chi|_{U_{N}})\geq
0$, and hence $\hat{f}(\chi)$ is also nonnegative.
\end{proof}

\subsection{Discrete amalgam norms}

\label{DiscreteAmalgamNorms}

We used the amalgam norm
\begin{equation}
\left\Vert f\right\Vert _{p,\infty}=\sup_{n}\left(  \frac{1}{\omega_{\alpha
}\left(  I_{n}\right)  }\int_{n-1}^{n}\left\vert f\right\vert ^{p}%
\,d\omega_{\alpha}\right)  ^{\frac{1}{p}} \label{SupOfLpAverages}%
\end{equation}
to state Theorem \ref{transforms} for Bessel-Kingman hypergroups. Consider the
corresponding norm
on $H$. Given the division by the mass $\omega_{\alpha}(I_{n})$ here, the
integral above should run over the interval $I_{n}$. In $H$ that coincides
with the set $\{n-1\},$ with the curious outcome that
\begin{equation}
\Vert f\Vert_{p,\infty}=\sup_{n}{|f(n-1)|}=\sup_{n}{|f(n)|}=\Vert
f\Vert_{\infty} \label{natural}%
\end{equation}
no matter what $p$ is.

When $p<\infty$, there are compactly supported functions in $L^{p}(H)$ that
tend to $\infty$ at $\infty$. Any such function $f$ has the property that
\begin{equation}
\sup_{n}\left(  \int\left\vert f\right\vert ^{p}\tau_{n}\mathbf{1}%
_{U}\,d\omega\right)  ^{\frac{1}{p}}<\infty\label{SupLpNorms}%
\end{equation}
for each compact neighbourhood $U$ of $\infty$ even though $\Vert
f\Vert_{p,\infty}=\infty$. So the norm $\Vert\cdot\Vert_{p,\infty}$\ is not
equivalent to the one given in (\ref{SupLpNorms}). But the modified norm%
\begin{equation}
\Vert f\Vert_{p,\infty}^{\ast}=\max\left\{  \left\Vert f\mathbf{1}%
_{H\backslash U_{0}}\right\Vert _{p,\infty},\left\Vert f\mathbf{1}_{U_{0}%
}\right\Vert _{p}\right\}  , \label{ModifyLpinfty}%
\end{equation}
where $U_{0}$
can be replaced by any compact neighbourhood of $\infty,$ is equivalent to the
norm in (\ref{SupLpNorms}).

Different
choices of $U$ in \eqref{SupLpNorms} give norms that are equivalent to each
other, by the argument just after Corollary \ref{Amalgam equivalence}. Similar
reasoning applies to \eqref{ModifyLpinfty}, and
it suffices to prove the equivalence between the latter and the norm in
\eqref{SupLpNorms} when $U=U_{0}$. Split
the calculation of the supremum in (\ref{SupLpNorms}) into two cases
corresponding to different instances
of (\ref{NoncompactConvolution}). For $n<0$ we have $\tau_{n}\mathbf{1}%
_{U_{0}}=2^{n+1}\mathbf{1}_{\{n\}},$ so that%
\[
\left(  \int\left\vert f\right\vert ^{p}\tau_{n}\mathbf{1}_{U_{0}}%
\,d\omega\right)  ^{\frac{1}{p}}=\left\vert f\left(  n\right)  \right\vert .
\]
For $n\geq0$
we obtain $\tau_{n}\mathbf{1}_{U_{0}}=\mathbf{1}_{U_{0}},$
and this gives%
\[
\left(  \int\left\vert f\right\vert ^{p}\tau_{n}\mathbf{1}_{U_{0}}%
\,d\omega\right)  ^{\frac{1}{p}}=\left\Vert f\mathbf{1}_{U_{0}}\right\Vert
_{p}.
\]
By formula \eqref{natural}, the norms in \eqref{SupLpNorms} and
\eqref{ModifyLpinfty} coincide when $U=U_{0}$.

When $1\leq q<\infty$, let
\begin{equation}
\Vert f\Vert_{p,q}^{\ast}=\left\{  \left\Vert f\mathbf{1}_{H\backslash U_{0}%
}\right\Vert _{p,q}^{q}+\left\Vert f\mathbf{1}_{U_{0}}\right\Vert _{p}%
^{q}\right\}  ^{1/q} \label{modifypq}%
\end{equation}
where
\[
\left\Vert f\mathbf{1}_{H\backslash U_{0}}\right\Vert _{p,q}\equiv\left\{
\sum_{n<0}\omega(\{n\})|f(n)|^{q}\right\}  ^{1/q}%
\]
actually doesn't depend on $p$. Whenever $1\leq p,q\leq\infty$, denote the
space of functions $f$ on $H$ for which $\Vert f\Vert_{p,q}^{\ast}<\infty$ by
$(L^{p},\ell^{q})(H)$.

On $H$, the structure of these spaces is simpler than it is on the real line
or on the Bessel-Kingman hypergroups. A function belongs to $(L^{p},\ell
^{q})(H)$ if and only if both its restriction to the set $U_{0}$ belongs to
$L^{p}$ and its restriction to the complement of $U_{0}$ belongs to $L^{q}$.

Since $\omega(U_{0})=1$, the restriction to $U_{0}$ then belongs to $L^{r}$
for all $r\leq p$. Since each point in the complement of $U_{0}$ has mass at
least $1$, the restriction to the complement then belongs to $L^{r}$ for all
$r\geq q$.
Extend those restrictions by $0$ to see that $(L^{p},\ell^{q})(H)$ contains
the same functions as $L^{p}(H)+L^{q}(H)$ when $p\leq q$, and the same
functions as $L^{p}(H)\cap L^{q}(H)$ when $p\geq q$.

\subsection{Fourier transforms}

The norms $\Vert\cdot\Vert_{p,q}^{\ast}$ have good properties relative to
Fourier transforms (see below). Define $\Vert\cdot\Vert_{p,q}$ on
$\overset{\wedge}{H}$ as for $H$ just by replacing $\omega$ by $\pi$. Let
\[
U_{0}^{\perp}\equiv\{n\in\overset{\wedge}{H}:n\leq0\}
\]
and use $U_{0}^{\perp}$\ and its complement in\ $\overset{\wedge}{H}$ to
define\ $\Vert\cdot\Vert_{p,q}^{\ast}$ as in equations\ (\ref{ModifyLpinfty})
and\ (\ref{modifypq}). We have the following counterpart of Theorem
\ref{transforms}.

\begin{theorem}
\label{P(H)counterpart} The following statements are equivalent for a (locally
integrable) function $f$ of positive type on the hypergroup $H$:

\begin{enumerate}
\item $f$ is square integrable in a neighbourhood of the identity;

\item $f$ is the (inverse) transform of a function in the space $\left(
L^{1},\ell^{2}\right)  (\overset{\wedge}{H});$

\item $f\in\left(  L^{2},\ell^{\infty}\right)  \left(  H\right)  .$
\end{enumerate}
\end{theorem}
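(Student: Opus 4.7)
My plan is to mirror the proof of Theorem~\ref{transforms} while exploiting the much simpler amalgam structure of $H$ established in Section~\ref{DiscreteAmalgamNorms}, where $(L^{p},\ell^{q})(H)$ decomposes as $L^{p}(U_{0})+L^{q}(H\setminus U_{0})$ under the modified norm~(\ref{modifypq}), and analogously $(L^{p},\ell^{q})(\hat{H})$ splits over $U_{0}^{\perp}$ and its complement. A crucial feature is that both $U_{0}$ and $U_{0}^{\perp}$ carry unit Haar (respectively Plancherel) mass. I will establish (3)$\Rightarrow$(1), (1)$\Rightarrow$(3), (1)$\Rightarrow$(2), and (2)$\Rightarrow$(1) separately.

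The direction (3)$\Rightarrow$(1) is immediate, since $U_{0}$ is a compact neighborhood of $e=\infty$ and $\|f\mathbf{1}_{U_{0}}\|_{2}\leq\|f\|_{2,\infty}^{\ast}$. For (1)$\Rightarrow$(3), I would invoke Corollary~\ref{Amalgam equivalence} for the strong hypergroup $H$ (Remark~\ref{PositiveDefiniteproductextended}) at $p=2$, transferring the conclusion to $\|\cdot\|_{2,\infty}^{\ast}$ via the norm equivalence at the end of Section~\ref{DiscreteAmalgamNorms}. Verifying the hypothesis $f\in(L^{1},\ell^{\infty})(H)$ reduces, on $H$, to bounding $f$ on the discrete part $H\setminus U_{0}$; this boundedness can be extracted from the Cauchy--Schwarz inequality for the positive-type sesquilinear form applied to test functions of the form $c_{1}\mathbf{1}_{\{n\}}+c_{2}\mathbf{1}_{U_{0}}$, together with the fact from Proposition~\ref{Unbounded} that $|f|^{2}$ is itself of positive type.

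The equivalence (1)$\Leftrightarrow$(2) rests on Plancherel's theorem. For (1)$\Rightarrow$(2), Lemma~\ref{TrivialExtension} makes $\tilde{f}:=f\mathbf{1}_{U_{0}}$, extended by zero, a function of positive type on $H$ belonging to $L^{1}(H)\cap L^{2}(H)$; its transform then lies in $L^{2}(\hat{H})$ and is nonnegative on $\mathrm{supp}\,\pi$, and Cauchy--Schwarz on the unit-mass set $U_{0}^{\perp}$ places $\widehat{\tilde{f}}$ in $L^{1}(\pi|_{U_{0}^{\perp}})$, hence in $(L^{1},\ell^{2})(\hat{H})$; the residual contribution coming from $f$ on $H\setminus U_{0}$ is handled using the boundedness already available from~(3). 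Conversely, the inverse transform of any nonnegative $g\in(L^{1},\ell^{2})(\hat{H})$ splits as the sum of an $L^{\infty}(H)$ piece (coming from $g|_{U_{0}^{\perp}}\in L^{1}(\pi)$) and an $L^{2}(H)$ piece (coming from $g|_{\hat{H}\setminus U_{0}^{\perp}}\in L^{2}(\pi)$ via Plancherel), each of which is square-integrable on the compact set $U_{0}$.

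The main obstacle is the delicate treatment of the trivial character $\chi_{-\infty}\equiv 1$: it lies in $\hat{H}$ but has Plancherel mass zero, so any constant component in the Bochner-type expansion $f=\alpha_{-\infty}\chi_{-\infty}+\sum_{j}\alpha_{j}\chi_{j}$ escapes the inverse-transform description of~(2). One must verify that~(2) is to be read modulo such constants, and that the matching between $f$ and its inverse transform in (1)$\Rightarrow$(2) and (2)$\Rightarrow$(1) is consistent with this. Apart from this point, the analogues on $H$ of the Hausdorff--Young and translation-boundedness results of Sections~\ref{Translation}--\ref{FourierTransform} required here are substantially easier than in the Bessel--Kingman setting, since the amalgams on $H$ and $\hat{H}$ split cleanly into two pieces.
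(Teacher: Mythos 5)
Your overall route (proving the implications directly, using the clean splitting of the amalgams on $H$ and $\widehat{H}$) differs from the paper's, which simply runs the proof of Theorem 3.1 of \cite{Fournier1}, as for Theorem \ref{transforms}, and observes that the required amalgam Hausdorff--Young property \eqref{HY} is immediate because $(L^{p},\ell^{q})(H)$ is just $L^{p}+L^{q}$ or $L^{p}\cap L^{q}$. The genuine gap in your proposal is in $(1)\Rightarrow(2)$, at exactly the point you defer: the piece $f\mathbf{1}_{H\setminus U_{0}}$ is only known to be bounded, and boundedness cannot place it in the range of the inverse transform. Indeed every $g\in(L^{1},\ell^{2})(\widehat{H})=L^{1}(\pi)+L^{2}(\pi)$ has inverse transform tending to $0$ as $n\to-\infty$: for the $L^{1}(\pi)$ part, $\chi_{k}(n)=0$ when $n\leq k-2$, so $|\check{g}_{1}(n)|\leq\sum_{k\leq n+1}\pi(\chi_{k})|g_{1}(\chi_{k})|\to0$; for the $L^{2}(\pi)$ part, $\check{g}_{2}\in L^{2}(\omega)$ and $\omega(\{n\})\to\infty$, so $|\check{g}_{2}(n)|\leq\Vert\check{g}_{2}\Vert_{2}\,\omega(\{n\})^{-1/2}\to0$. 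Hence $f\equiv\mathbf{1}$, which is of positive type and satisfies (1) and (3), is not such an inverse transform, so ``handled using the boundedness already available from (3)'' cannot close this direction; the $\chi_{-\infty}$ issue you raise at the end is not a peripheral normalization but the missing step itself (the paper's terse proof is silent on it as well). To complete your argument you would need a Godement/Bochner-type representation: every locally integrable $f$ of positive type on $H$ satisfying (1) equals, a.e., $\alpha\mathbf{1}$ plus the inverse transform of a non-negative $g\in(L^{1},\ell^{2})(\widehat{H})$ (equivalently, its representing positive measure on $\widehat{H}$ is $\alpha\varepsilon_{\chi_{-\infty}}+g\,d\pi$), together with a precise reading of statement (2) for such $f$; you neither prove nor cite this, so $(1)\Rightarrow(2)$ is not established.

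There is also a smaller gap in $(1)\Rightarrow(3)$. With $F=|f|^{2}$ (of positive type by Proposition \ref{Unbounded}) and test functions $c_{1}\mathbf{1}_{\{n\}}+c_{2}\mathbf{1}_{U_{0}}$, the Cauchy--Schwarz inequality for the positive-type form gives $\bigl(F(n)\,\omega(\{n\})\bigr)^{2}\leq\omega(\{n\})\bigl(\int_{U_{n+1}}F\,d\omega\bigr)\bigl(\int_{U_{0}}F\,d\omega\bigr)$, since $\mathbf{1}_{\{n\}}\ast\mathbf{1}_{\{n\}}=\omega(\{n\})\mathbf{1}_{U_{n+1}}$ and $\mathbf{1}_{\{n\}}\ast\mathbf{1}_{U_{0}}=\mathbf{1}_{\{n\}}$; the right-hand side involves the unknown values $F(n+1),\dots,F(-1)$, so boundedness is not a one-step extraction as your sketch suggests. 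It can be repaired by downward induction on $n$ (using $\sum_{j=n+1}^{-1}\omega(\{j\})=\omega(\{n\})-1$ one obtains $F(n)\leq\int_{U_{0}}F\,d\omega$ for all $n<0$), or replaced by the paper's own localization route (Theorem \ref{LocalGlobal} and the discussion after Remark \ref{Sharpening}). Note also that once this boundedness on $H\setminus U_{0}$ is available, (3) follows directly from the definition of $\Vert\cdot\Vert_{2,\infty}^{\ast}$, so the appeal to Corollary \ref{Amalgam equivalence} is superfluous. The remaining parts of your outline --- $(3)\Rightarrow(1)$, $(2)\Rightarrow(1)$, and the $U_{0}$-part of $(1)\Rightarrow(2)$ via Lemma \ref{TrivialExtension}, Plancherel and $\pi(U_{0}^{\perp})=1$ --- are correct.
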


\begin{proof}
Again this follows if
the Fourier transform extends from $L^{1}(H)\cap L^{2}(H)$ to have
appropriate mapping properties between suitable amalgam spaces, that is,
\begin{equation}
\text{if}\quad\Vert f\Vert_{p,q}^{\ast}<\infty,\quad\text{where}\quad1\leq
p,q\leq2,\quad\text{then}\quad\Vert\overset{\wedge}{f} \Vert_{q^{\prime
},p^{\prime}}^{\ast}<\infty. \label{HY}%
\end{equation}
By the observations at the end of Section \ref{DiscreteAmalgamNorms}, this is
equivalent to checking, when $1 \le p, q \le2$, that if $f \in L^{p}%
(H)+L^{q}(H)$ then $\hat f \in L^{q^{\prime}}(\hat H)+L^{p^{\prime}}(\hat H)$,
and the same for $L^{p}(H)\cap L^{q}(H)$ and $L^{q^{\prime}}(\hat H)\cap
L^{p^{\prime}}(\hat H)$.
Both parts follow immediately from the Hausdorff-Young theorem
\cite{Degenfeld-Schonberg} for hypergroups.
\end{proof}

\begin{remark}
\label{Constant1} In fact,
$\Vert\hat{f}$
$\Vert_{q^{\prime},p^{\prime}}^{\ast}\leq\Vert f\Vert_{p,q}^{\ast}$ in all
these cases. Complex interpolation again reduces matters to proving this in
the extreme cases where $(p,q)$ is one of $(1,1),(2,2),(1,2)$ and $(2,1)$. The
first two
cases are true because
\[
\Vert\overset{\wedge}{f}\Vert_{\infty}\leq\Vert f\Vert_{1}\quad\text{and}%
\quad\Vert\overset{\wedge}{f}\Vert_{2}=\Vert f\Vert_{2}.
\]
The corresponding estimates in the
other two extreme cases follow from each other by duality as in Section
\ref{FourierTransform}.

We elect to confirm the case where $(p,q)=(2,1)$ and $(q^{\prime}, p^{\prime})
= (\infty, 2)$. Split $f$ as $f_{1} + f_{2}$, where $f_{2} = {f}%
\mathbf{1}_{U_{0}}$ and $f_{1}$ vanishes on $U_{0}$. Since
\[
\|f\|^{*}_{2, 1} = \|f_{1}\|_{1} + \|f_{2}\|_{2},
\]
it suffices to show that $
\| \widehat{f_{1}}
\|^{*}_{\infty, 2} \le\|f_{1}\|_{1}$ and $
\| \widehat{f_{2}}
\|^{*}_{\infty, 2} \le\|f_{2}\|_{2}$.

Note that $\widehat{f_{1}}(n)=0$ for all $n>0$, since the support of $f_{1}$
is disjoint from that of $\chi_{n}$ when $n>0$. So
$
\Vert\widehat{f_{1}}
\Vert_{\infty,2}^{\ast}$ simplifies to become $
\Vert{\widehat{f_{1}}}\mathbf{1}_{U_{0}^{\perp}}
\Vert_{\infty}$, and
\[
\Vert\widehat{f_{1}}
\Vert_{\infty,2}^{\ast}\leq
\Vert\widehat{f_{1}}
\Vert_{\infty}\leq\Vert f_{1}\Vert_{1}\quad\text{as required.}%
\]
Note also that the characters $\chi_{n}$ with $n\leq0$ are all equal to $1$ on
the set $U_{0}$, making $\widehat{f_{2}}$ constant on the set $U_{0}^{\perp}$.
Then $
\Vert\widehat{f_{2}}\mathbf{1}_{U_{0}^{\perp}}
\Vert_{\infty}=
\Vert{\widehat{f_{2}}\mathbf{1}_{U_{0}^{\perp}}}
\Vert_{2}$ since $\pi\left(  U_{0}^{\perp}\right)  =1$. Expand $
\Vert\widehat{f_{2}}
\Vert_{\infty,2}^{\ast}$ as
\begin{align*}
\{(\Vert\widehat{f_{2}}\mathbf{1}_{U_{0}^{\perp}}\Vert_{\infty})^{2}%
+\sum_{n>0}\pi(\{n\})|\widehat{f_{2}}(n)|^{2}\}^{\frac{1}{2}}  &
=\{(\Vert\widehat{f_{2}}\mathbf{1}_{U_{0}^{\perp}}\Vert_{2})^{2}+\sum_{n>0}%
\pi(\{n\})|\widehat{f_{2}}(n)|^{2}\}^{\frac{1}{2}}\\
&  =\Vert\widehat{f_{2}}\Vert_{2}=\Vert f_{2}\Vert_{2}\quad\text{as required.}%
\end{align*}

\end{remark}

\subsection{Wiener's theorem for all exponents\label{WienerCountableCompact}}

We will show that versions of Wiener's theorem hold on $H$ for all exponents
in the interval $[1,\infty]$, but we first note that Lemma \ref{Property*} can
be sharpened in the case of this hypergroup:

\begin{remark}
\label{Sharpening}For $U=U_{N}$ we may choose the neighbourhood $V$ in the
proof of Lemma \ref{Property*} to be $U_{N}$ as well. Instead of inequality
(\ref{nesting}) we obtain
\[
h:=\mathbf{1}_{V}^{\sim}\ast\mathbf{1}_{V}={\omega(U_{N})}\mathbf{1}_{U_{N}}.
\]
The next step in that proof then works with the singleton $x_{1}=\{e\}$, the
parameter $\lambda_{1}=1/\omega(U_{N})$ and the measure $\nu=\lambda
_{1}\varepsilon_{e}$. The long chain of equalities and inequalities there ends
with the quantity
\[
\Vert\nu\Vert\int hg\,d\omega_{K}.
\]
For the special choice of $h$ above, this is
\[
\Vert\nu\Vert\left\{  \omega(U_{N})\int_{U_{N}}g\,d\omega_{K}\right\}
\]
which gives the conclusion of Lemma \ref{Property*} with
\[
C_{U_{N}}=\Vert\nu\Vert{\omega(U_{N})}=1.
\]

\end{remark}


It follows that Corollary \ref{special case}
holds with $C_{U} = 1$ when $U = U_{N}$. Since the proof of that corollary
only requires that $|f|^{p} \in P_{b}(K)$,
Proposition \ref{Unbounded} yields the conclusion of the corollary
for all exponents $p$ in the interval $[1, \infty)$, again with $C_{U} = 1$ if
$U = U_{N}$ for some $N$.
The
proof of Corollary \ref{bad} shows, for such exponents $p$,
that if inequality \eqref{(c2)} holds for all functions $f$ in $P_{b}(K)$,
then the inequality holds with the same constant $C_{U}$ for all integrable
functions $f$
that are of positive type.


\begin{theorem}
\label{LocalGlobal} Let $p\in\left[  1,\infty\right]  $ and $f$ be a function
of positive type on H. Then
\begin{equation}
\Vert f\Vert_{p,\infty}^{\ast}=\Vert f\Vert_{p,\infty,U_{0}}=\left\Vert
f\mathbf{1}_{U_{0}}\right\Vert _{p}. \label{sharp}%
\end{equation}
For
a general relatively compact neighbourhood $U$ of the identity there are
constants $C_{U}$ and $C_{U}^{\prime}$ (independent of $p$) such that
\begin{equation}
\Vert f\Vert_{p,\infty,U}\leq C_{U}\left\Vert f\mathbf{1}_{U}\right\Vert
_{p}\text{\hspace{0.5cm}and\hspace{0.5cm}}\Vert f\Vert_{p,\infty}^{\ast}\leq
C_{U}^{\prime}\left\Vert f\mathbf{1}_{U}\right\Vert _{p} \label{globalamalgam}%
\end{equation}
for all (locally integrable) functions $f$ of positive type.

\end{theorem}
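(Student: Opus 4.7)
My plan is to prove the equalities in (\ref{sharp}) first and then deduce the inequalities (\ref{globalamalgam}) from the $U_N$-analogue of (\ref{sharp}) combined with the equivalence of amalgam norms derived after Corollary \ref{Amalgam equivalence}. The key input is the sharpened Corollary \ref{bad} with constant $C_{U_N}=1$ supplied by Remark \ref{Sharpening}, extended to all $p\in[1,\infty)$ for integrable positive type functions via Proposition \ref{Unbounded}, exactly as noted in the paragraph immediately preceding the theorem.

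For (\ref{sharp}), I would first compute $\tau_n \mathbf{1}_{U_0}$ directly from (\ref{NoncompactConvolution}) and the identity involution on $H$: one gets $\tau_n \mathbf{1}_{U_0} = \mathbf{1}_{U_0}$ for $n \in U_0 \cup \{\infty\}$, while for $n<0$ the case $\varepsilon_n \ast \varepsilon_n$ of (\ref{NoncompactConvolution}) gives the geometric sum $\tau_n \mathbf{1}_{U_0} = 2^{n+1}\mathbf{1}_{\{n\}}$. Using $\omega(\{n\}) = 2^{-n-1}$, this yields $\|f(\tau_n \mathbf{1}_{U_0})^{1/p}\|_p = |f(n)|$ for $n<0$ and $\|f(\tau_n \mathbf{1}_{U_0})^{1/p}\|_p = \|f \mathbf{1}_{U_0}\|_p$ for $n \in U_0 \cup \{\infty\}$, so
\[
\|f\|_{p,\infty,U_0} = \max\bigl\{\,\sup_{n<0}|f(n)|,\;\|f \mathbf{1}_{U_0}\|_p\,\bigr\}.
\]
The sharpened Corollary \ref{bad} applied with $\mu = \varepsilon_n$ ($n<0$) now gives $|f(n)|^p \leq \|f \mathbf{1}_{U_0}\|_p^p$ for $p \in [1,\infty)$, while Corollary \ref{Infinity norm} covers $p=\infty$. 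Combined with formula (\ref{natural}), which identifies $\|f\mathbf{1}_{H\setminus U_0}\|_{p,\infty}$ with $\sup_{n<0}|f(n)|$, this yields both equalities in (\ref{sharp}) simultaneously.

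For (\ref{globalamalgam}), since the compact subhypergroups $\{U_N : N \in \mathbb{Z}\}$ form a neighbourhood basis at $\infty$, any relatively compact neighbourhood $U$ of the identity satisfies $U_N \subset U \subset U_M$ for some integers $M \leq N$. The equivalence argument immediately after Corollary \ref{Amalgam equivalence} (with constants independent of $p$) then gives $\|f\|_{p,\infty,U} \leq C \|f\|_{p,\infty,U_N}$, and the obvious adaptation of the previous paragraph with $U_N$ replacing $U_0$ yields $\|f\|_{p,\infty,U_N} = \|f \mathbf{1}_{U_N}\|_p \leq \|f \mathbf{1}_U\|_p$, giving the first estimate. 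For the second, note that $\|f\|_{p,\infty}^* = \|f\mathbf{1}_{U_0}\|_p$ by the first part; if $N \leq 0$ then $U_0 \subset U$ and the bound is immediate, while if $N>0$ I apply the sharpened Corollary \ref{bad} with $V = U_N$ and $\mu = \varepsilon_n$, $0 \leq n < N$, together with the analogous calculation $\tau_n \mathbf{1}_{U_N} = 2^{n-N+1}\mathbf{1}_{\{n\}}$ to obtain $|f(n)|^p \leq 2^N \|f\mathbf{1}_{U_N}\|_p^p$; summing these estimates against the weights $2^{-n-1}$ for $n \geq 0$ yields $\|f\mathbf{1}_{U_0}\|_p \leq (2^N + 1)^{1/p}\|f\mathbf{1}_{U_N}\|_p \leq 2^{N+1}\|f\mathbf{1}_U\|_p$, a bound independent of $p$.

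The main technical step is the explicit calculation of $\tau_n\mathbf{1}_{U_N}$; the rest is bookkeeping. A secondary point is confirming that the sharpened Corollary \ref{bad} applies to all $p \in [1,\infty)$ rather than only to even $p$, but this follows immediately from Proposition \ref{Unbounded} (which shows $|f|^p$ inherits positive-type from $f$) together with the equality $P(H) = P_b(H)$ on this hypergroup, so that Remark \ref{Sharpening} is directly applicable.
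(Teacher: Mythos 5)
Your overall route is the paper's: the explicit formulas for $\tau_n\mathbf{1}_{U_0}$ and $\tau_n\mathbf{1}_{U_N}$, formula \eqref{natural}, the sharpened constant $C_{U_N}=1$ from Remark \ref{Sharpening} combined with Proposition \ref{Unbounded} for all $p\in[1,\infty)$, and the $p$-independent equivalence of amalgam norms after Corollary \ref{Amalgam equivalence}; your quantitative treatment of the second inequality in \eqref{globalamalgam} (summing $|f(n)|^p\le 2^N\Vert f\mathbf{1}_{U_N}\Vert_p^p$ against the weights $2^{-n-1}$) is a harmless variant of the paper's appeal to norm equivalence, and the computations you propose are correct.

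There is, however, one genuine gap: the theorem is asserted for all \emph{locally integrable} functions of positive type, whereas the tool you invoke — Corollary \ref{bad} sharpened as in the paragraph before the theorem — is available only for \emph{integrable} functions of positive type (and Corollary \ref{Infinity norm}, which you use for $p=\infty$, requires $f\in(L^1,\ell^\infty)(K)$). You acknowledge the integrability hypothesis in your opening plan and then apply the inequality $|f(n)|^p\le\Vert f\mathbf{1}_{U_0}\Vert_p^p$ directly to $f$, which need not be integrable: the constant function $\mathbf{1}$ is of positive type on $H$ but not in $L^1(H)$, and verifying $f\in(L^1,\ell^\infty)(H)$ a priori would be circular, since that is essentially the $p=1$ case of the estimate being proved. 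The missing step is the paper's opening reduction: when $\Vert f\mathbf{1}_U\Vert_p<\infty$, restrict $f$ to a subhypergroup $U_N$ containing the finitely many points involved in each inequality and extend by zero; by the localization discussion and Lemma \ref{TrivialExtension} this truncation is of positive type, compactly supported, hence integrable, and since it agrees with $f$ on $U_N\supset U_0\cup\{n\}$ the desired inequality for $f$ follows from the one for the truncation (the $p=\infty$ statements then follow either from Corollary \ref{Infinity norm} applied to the truncations or, as in the paper, by letting $p\to\infty$ using the $p$-independence of the constants). With that reduction inserted, your argument is complete and matches the paper's proof.
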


\begin{corollary}
\label{WienerTheorem}Let $p\in\left[  1,\infty\right]  $. For every relatively
compact neighbourhood $U$ of the identity in $H$ and every compact subset $V$
of $H$ there is a constant $C_{U,V}$
(independent of $p$) such that%
\begin{equation}
\Vert f\mathbf{1}_{V}\Vert_{p}\leq C_{U,V}\left\Vert f\mathbf{1}%
_{U}\right\Vert _{p} \label{OtherCompactSet}%
\end{equation}
for all (locally integrable) functions $f$ of positive type.

\end{corollary}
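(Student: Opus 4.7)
The plan is to reduce the inequality to Theorem~\ref{LocalGlobal} by dominating $\mathbf{1}_V$ pointwise by a finite non-negative combination of translates $\tau_{x_i}\mathbf{1}_U$, and then to apply the amalgam bound already in hand. The case $p=\infty$ will be handled separately via Corollary~\ref{Infinity norm}.

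The key preliminary step is to produce points $x_1,\dots,x_n \in H$ and weights $\lambda_1,\dots,\lambda_n > 0$ such that $\mathbf{1}_V \leq \sum_{i=1}^n \lambda_i\,\tau_{x_i}\mathbf{1}_U$. This is the same kind of covering argument used in the proof of Lemma~\ref{L^1case} and in the discussion following Corollary~\ref{Amalgam equivalence}: one picks a small relatively compact neighbourhood $W$ of $e$ with $h := \mathbf{1}_W^{\sim}\ast \mathbf{1}_W \leq \mathbf{1}_U$, notes that $h$ is continuous and strictly positive near $e$, and covers the compact set $V$ by finitely many translates of that positivity neighbourhood. On $H$ this is particularly transparent because every compact subset of $H$ lies inside some $U_N$, and the explicit convolution formulas~(\ref{NoncompactConvolution}) let one check the required positive lower bounds by hand. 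Given such a decomposition, for $p \in [1,\infty)$ I would write
\[
\|f\,\mathbf{1}_V\|_p^p \;=\; \int |f|^p \mathbf{1}_V\,d\omega \;\leq\; \sum_{i=1}^n \lambda_i \int |f|^p\, \tau_{x_i}\mathbf{1}_U\,d\omega \;\leq\; \Bigl(\sum_{i=1}^n \lambda_i\Bigr)\|f\|_{p,\infty,U}^p,
\]
by the very definition of $\|\cdot\|_{p,\infty,U}$, and then invoke Theorem~\ref{LocalGlobal} to get $\|f\|_{p,\infty,U} \leq C_U\|f\,\mathbf{1}_U\|_p$ with $C_U$ independent of $p$. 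Since $(\sum_i \lambda_i)^{1/p} \leq \max(1,\sum_i \lambda_i)$ for every $p \in [1,\infty)$, the constant $C_{U,V} := \max(1,\sum_i \lambda_i)\,C_U$ depends only on $U$ and $V$.

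For $p=\infty$, Corollary~\ref{Infinity norm} (valid on $H$ by the concluding paragraph of the remark following Corollary~\ref{Amalgam equivalence}) gives directly $\|f\,\mathbf{1}_V\|_\infty \leq \|f\|_\infty \leq \|f\,\mathbf{1}_U\|_\infty$, so the same $C_{U,V}$ also works in this case. The main obstacle will be the covering inequality in the second paragraph; once that is secured, everything else follows by a direct application of the estimates already established in Section~\ref{Wiener} and in Theorem~\ref{LocalGlobal}.
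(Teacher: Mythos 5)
Your argument is correct and essentially the paper's: the paper proves the corollary via the chain $\Vert f\mathbf{1}_{V}\Vert_{p}\leq\Vert f\Vert_{p,\infty,V}\leq C_{U,V}^{\prime}\Vert f\Vert_{p,\infty,U}\leq C_{U,V}^{\prime}C_{U}\Vert f\mathbf{1}_{U}\Vert_{p}$, where the middle (norm-equivalence) step rests on exactly the covering $\mathbf{1}_{V}\leq\sum_{i}\lambda_{i}\tau_{x_{i}}\mathbf{1}_{U}$ that you carry out inline, and the last step is the first inequality in \eqref{globalamalgam}, so the constants are $p$-independent just as you argue. The only slight divergence is at $p=\infty$: the paper treats it as the limit of the finite-$p$ cases, whereas your appeal to Corollary \ref{Infinity norm} also works, provided you add the one-line observation that a locally integrable function of positive type on $H$ with $\Vert f\mathbf{1}_{U}\Vert_{\infty}<\infty$ automatically lies in $(L^{1},\ell^{\infty})(H)$ (for instance by the $p=1$ case of Theorem \ref{LocalGlobal}), since that corollary is stated only for $f\in(L^{1},\ell^{\infty})$ of positive type.
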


\begin{corollary}
\label{CompactCase} Let $p\in\left[  1,\infty\right]  $. For every
neighbourhood $U$ of the identity in the compact hypergroup $H_{1/2}$ there is
a constant $C_{U}$\ (independent of $p$) such that%
\begin{equation}
\Vert f \Vert_{p}\leq C_{U}\left\Vert f\mathbf{1}_{U}\right\Vert _{p}
\label{global}%
\end{equation}
for all functions $f$ of positive type.

\end{corollary}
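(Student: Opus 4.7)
The plan is to reduce the compact case $H_{1/2}$ to the already-established non-compact case by embedding $H_{1/2}$ as the compact subhypergroup $U_0$ of the locally compact hypergroup $H$ constructed in Section \ref{Non-compact}. The key structural observation is that $H_{1/2} = U_0 = \{0, 1, 2, \ldots, \infty\}$, and any neighbourhood $U$ of the identity $\infty$ in $H_{1/2}$ must contain some $U_N$ with $N \geq 0$; viewed as a subset of $H$, this same set $U$ is a relatively compact neighbourhood of the identity in $H$, because it is contained in the compact set $U_0$ and still contains $U_N$.

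Given a function $f$ of positive type on $H_{1/2}$, I would extend it by zero to a function $\tilde f$ on $H$. Lemma \ref{TrivialExtension} guarantees that $\tilde f$ is of positive type on $H$. Since $\tilde f$ vanishes off $U_0 = H_{1/2}$, one has $\|\tilde f\|_p = \|f\|_p = \|\tilde f \mathbf{1}_{H_{1/2}}\|_p$, and $\|\tilde f \mathbf{1}_U\|_p = \|f \mathbf{1}_U\|_p$ whenever $U \subseteq H_{1/2}$.

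Now apply Corollary \ref{WienerTheorem} to $\tilde f$ on $H$ with the chosen neighbourhood $U$ and the compact set $V = H_{1/2}$. This produces a constant $C_{U, H_{1/2}}$, independent of $p$, such that $\|\tilde f \mathbf{1}_{H_{1/2}}\|_p \leq C_{U, H_{1/2}} \|\tilde f \mathbf{1}_U\|_p$. Rewriting this in terms of $f$ gives $\|f\|_p \leq C_U \|f \mathbf{1}_U\|_p$ with $C_U := C_{U, H_{1/2}}$, as required.

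There is no real obstacle here; the substantive work has been done in establishing Lemma \ref{TrivialExtension} and Corollary \ref{WienerTheorem}. The only point demanding care is making sure the $p$-independence of the constant in Corollary \ref{WienerTheorem} is preserved under the restriction-to-subhypergroup procedure, but this follows automatically because the neighbourhood $U$ and the compact set $V = H_{1/2}$ are both fixed before $p$ is chosen, and the extension by zero does not alter the relevant $L^p$ norms.
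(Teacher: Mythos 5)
Your argument is correct and is exactly the paper's route: extend $f$ by zero to $H$ (Lemma \ref{TrivialExtension} gives positive type there) and invoke the result already proved on $H$, here Corollary \ref{WienerTheorem} with $V=H_{1/2}=U_{0}$ and $U$ viewed as a relatively compact neighbourhood of the identity in $H$. The norm identifications and the $p$-independence of the constant are handled just as the paper intends, so nothing is missing.
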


\begin{proof}
[Proofs]As in Corollary \ref{Infinity norm}, the cases where $p=\infty$ follow
from those where $p < \infty$. In the latter cases, there is nothing to prove
unless $\left\Vert f\mathbf{1}_{U}\right\Vert _{p} < \infty$. Restricting $f$
to various subhypergroups $U_{N}$ and extending those restrictions by $0$
then reduces matters to cases where $f$ has compact support and is therefore
$p$-integrable, hence integrable.

The
first equality in \eqref{sharp} was shown, when $1 \le p < \infty$, in the
lines following \eqref{ModifyLpinfty}. For the second equality, it is clear
from the definition of $\Vert f\Vert_{p,\infty, U_{0}}$
that it is
no smaller than $\left\Vert f\mathbf{1}_{U_{0}}\right\Vert _{p}$. The opposite
inequality $\Vert f\Vert_{p,\infty, U_{0}} \leq\left\Vert f\mathbf{1}_{U_{0}%
}\right\Vert _{p}$ holds
because of the discussion after Remark \ref{Sharpening}.
The same discussion yields the first inequality in line \eqref{globalamalgam}.
The second inequality then follows
by the equivalence of the norms $\Vert\cdot\Vert_{p,\infty, U}$ and
$\Vert\cdot\Vert_{p,\infty}^{\ast}$.
This completes the proof of Theorem \ref{LocalGlobal}.

For Corollary \ref{WienerTheorem},
use
the chain of inequalities
\[
\Vert f\mathbf{1}_{V}\Vert_{p}\leq\Vert f\Vert_{p,\infty,V}\leq C_{U,V}%
^{\prime}\Vert f\Vert_{p,\infty,U}\leq C_{U,V}^{\prime}C_{U}\left\Vert
f\mathbf{1}_{U}\right\Vert _{p},
\]
where the first step uses the definition of $\Vert\cdot\Vert_{p,\infty,V}$,
the second step uses the equivalence of that norm with $\Vert\cdot
\Vert_{p,\infty,U}$ and the last step uses the first inequality in
\eqref{globalamalgam}.
Corollary \ref{CompactCase} follows because extending
$f$ by $0$
gives a function of positive type on $H$.

\end{proof}

%

%

\begin{remark}
The first inequality in
\eqref{globalamalgam} provides an upper bound for $\|f\|_{p, \infty, U}$ in
terms of $\left\Vert f\mathbf{1}_{U}\right\Vert _{p}$. When $p<\infty$, there
is no such general bound
for $\|f\|_{p}$. Indeed, since~$\sum_{n=-\infty}^{\infty}\omega(n) = \infty$,
the constant function $\mathbf{1}$ trivially belongs to the set $P(H)$ but to
none of the spaces $L^{p}(H)$ with $0<p<\infty$.

\end{remark}





\begin{thebibliography}{9999}                                                                                             %


\bibitem[$1$]{BertrandiasDupuis}Bertrandias, J. P. and Dupuis, C.,
\textit{Transformation de Fourier sur les espaces }$\ell^{p}\left(
L^{p^{\prime}}\right)  $\textit{. }Ann. Inst. Fourier Grenoble \textbf{29}
(1978), 189 - 206.

\bibitem[$2$]{Bloom/Heyer}Bloom, Walter R. and Heyer, Herbert,
\textit{Harmonic analysis of probability measures on hypergroups}. De Gruyter
Studies in Mathematics, \textbf{20,} Walter de Gruyter, Berlin-New York, 1995.

\bibitem[$3$]{CowlingMedaPasquale}Cowling, Michael, Meda, Stefano and
Pasquale, Roberta, \textit{Riesz potentials and amalgams}, Annales de
l'institut Fourier \textbf{49} (1999), 1345 - 1367.

\bibitem[$4$]{Degenfeld-Schonberg}Degenfeld-Schonburg, Sina Andrea,
\textit{Multipliers for hypergroups: Concrete examples, applications to time
series}, PhD Thesis, Technische Universit\"{a}t, M\"{u}nchen, 2012 (http://d-nb.info/1031513728/34).

\bibitem[$5$]{DunklRamirez}Dunkl, Charles F. and Ramirez, Donald E., \textit{A
family of countably }$P_{\ast}-$\textit{hypergroups}, Trans. Amer. Math. Soc.
\textbf{202} (1975), 339 - 356.

\bibitem[$6$]{Feichtinger}Feichtinger, Hans, \textit{Banach convolution
algebras of Wiener type}, Colloq. Math. Soc. Janos Bolyai \textbf{35} (1980),
509 - 524.

\bibitem[$7$]{Fournier1}Fournier, John J. F., \textit{Local and global
properties of functions and their Fourier transforms}, T\^{o}hoku Math. J.
\textbf{49} (1997), 115 - 131.

\bibitem[$8$]{Fournier 2}Fournier, John J. F., \textit{On the Hausdorff-Young
theorem for amalgams, }Monatsh. Math\textit{. }\textbf{95} (1983), 117 - 135.

\bibitem[$9$]{FournierStewart}Fournier, John J. F. and Stewart, James,
\textit{Amalgams of }$L^{p}$\textit{\ and }$\ell^{q}$, Bull. Amer. Math. Soc.
(New Series) \textbf{13} (1985), 1 - 21.

\bibitem[$10$]{Holland}Holland, Finbarr, \textit{Harmonic analysis on amalgams
of }$L^{p}$\textit{\ and }$\ell^{q}$, J. London Math. Soc. (2) \textbf{10}
(1975), 295 - 305.

\bibitem[$11$]{Innig}Innig, Lukas, \textit{Diplomarbeit \"{u}ber}
\textit{einen Satz von Wiener auf der Bessel-Kingman Hypergruppe}, Diploma
Thesis, Universit\"{a}t Heidelberg, 2011.

\bibitem[$12$]{LeinBook}Leinert, Michael, \textit{Integration und Ma{\ss },}
Vieweg, 1995.

\bibitem[$13$]{Leinert}Leinert, Michael, \textit{On a theorem of Wiener}.
Manuscripta Math. \textbf{110} (2003), 1 - 12.

\bibitem[$14$]{Rains}Rains, Michael, \textit{On functions with non-negative
Fourier transforms}, Indian J. Math. \textbf{27} (1985), 41 - 48.

\bibitem[$15$]{Shapiro}Shapiro, H. S., \textit{Majorant problems for Fourier
coefficients}, Quart. J. Math. Oxford, \textbf{26} (1975), 9 - 18.

\bibitem[$16$]{Spector}Spector, Ren\'{e}, \textit{Une classe d'hypergroupes
d\'{e}nombrables}, C. R. Acad. Sci. Paris S\'{e}r. A-B, \textbf{281} (1975),
A105 - A106.

\bibitem[$17$]{Wainger}Wainger, S., \textit{A problem of Wiener and the
failure of the principle for Fourier series with positive coefficients}, Proc.
Amer. Math. Soc. \textbf{20} (1969), 16 - 18.

\bibitem[$18$]{Zeuner}Zeuner, Hansmartin, \textit{Duality of commutative
hypergroups}. Probability measures on groups, X (Oberwolfach, 1990), 467--488,
Plenum, New York, 1991.
\end{thebibliography}
\end{document}